\title{Weighted frames of exponentials and stable recovery of multidimensional functions from nonuniform Fourier samples}
\author{Ben Adcock\footnote{Department of Mathematics, Simon Fraser University, BC V5A 1S6, Canada (ben\_adcock@sfu.ca). The author was supported by the NSF DMS grant 1318894.} \and Milana Gataric\footnote{CCA, Centre for Mathematical Sciences, University of Cambridge, CB3 0WA, UK (m.gataric@maths.cam.ac.uk). The author was supported by the UK EPSRC grant EP/H023348/1 for the University of Cambridge Centre for Doctoral Training, the Cambridge Centre for Analysis.} \and Anders C. Hansen \footnote{DAMTP, Centre for Mathematical Sciences, University of Cambridge, CB3 0WA, UK (ach70@cam.ac.uk). The author was supported by a Royal Society University Research Fellowship as well as the EPSRC grant EP/L003457/1.}}
\begin{document}

\maketitle

% \GR{Anders' comments,} \OR{corrections}
% 
% \BLUE{Ben's comments,} \MG{corrections}
% 
% \PURP{Milana's comments,} \RED{corrections}

\vspace{1pc}

\begin{abstract}
In this paper, we consider the problem of recovering a compactly supported multivariate function from a collection of pointwise samples of its Fourier transform taken nonuniformly. We do this by using the concept of weighted Fourier frames. A seminal result of Beurling shows that sample points give rise to a classical Fourier frame provided they  are relatively separated and of sufficient density.  However, this result does not allow for arbitrary clustering of sample points, as is often the case in practice.  Whilst keeping the density condition sharp and dimension independent, our first result removes the separation condition and shows that density alone suffices.  However, this result does not lead to estimates for the frame bounds.  A known result of Gr\"ochenig provides explicit estimates, but only subject to a density condition that deteriorates linearly with dimension.  In our second result we improve these bounds by reducing the dimension dependence. In particular, we provide explicit frame bounds which are dimensionless for functions having compact support contained in a sphere.  Next, we demonstrate how our two main results give new insight into a reconstruction algorithm---based on the existing generalized sampling framework---that allows for stable and quasi-optimal reconstruction in any particular basis from a finite collection of samples.  Finally, we construct sufficiently dense sampling schemes that are often used in practice---jittered, radial and spiral sampling schemes---and provide several examples illustrating the effectiveness of our approach when tested on these schemes.
\end{abstract}

\begin{small}
\hspace{0.22cm} \textbf{Key words.} Fourier frames, nonuniform sampling, generalized sampling, medical imaging
\end{small}

\section{Introduction}

The recovery of a compactly supported function from pointwise measurements of its Fourier transform---or equivalently, the recovery of a band-limited function from its  direct samples---has been the subject of comprehensive research during the past century,  driven by numerous practical applications ranging from Magnetic Resonance Imaging (MRI) to  Computed Tomography (CT), geophysical imaging, seismology and electron microscopy.  In many of these applications, the case when the data is acquired nonuniformly is of particular interest. For instance, MR scanners often use spiral sampling geometries for fast data acquisition.  Such sampling geometries are often preferable because of the higher resolution obtained in the Fourier domain and the lower magnetic gradients required to scan along such trajectories.   Another important example is radial (also known as polar) sampling of the Fourier transform, which is used in MRI, as well as in applications where the Radon transform is involved in the sampling process; CT, for instance.  For examples of different sampling patterns used in applications see Figure \ref{fig:ss}. Spurred by its practical importance, the past decades have witnessed the development of an extensive mathematical theory of nonuniform sampling, as evidenced by a vast body of literature.  An inexhaustive list includes the books of Marvasti \cite{Marvasti}, Benedetto and Ferreira \cite{BenedettoBook}, Young \cite{Young}, Seip \cite{SeipBook} and others, as well as many excellent articles; see \cite{AldroubiGrochenigSIREV,  BenedettoFrames, BenedettoSpiral,  FeichtingerGrochenigIrregular, FeichtingerEtAlEfficientNonuniform, GrochenigStrohmerMarvasti,  StrohmerNonuniformNA} and references therein.

In the case of Cartesian sampling, the celebrated Nyquist--Shannon theorem \cite{Unser50years} guarantees a full reconstruction of a compactly supported signal from its Fourier measurements, provided that the samples are taken equidistantly at a sufficiently high rate, equal to or exceeding the so-called Nyquist rate.  In other words, the samples must be taken uniformly and densely enough.  Nonuniform sampling is typically studied within the context of so-called Fourier frames.  The theory of Fourier frames was developed by Duffin and Shaeffer \cite{DuffSch}, more than half a century ago, and its roots can be traced back to earlier works of Paley and Wiener \cite{PaleyWiener} and Levinson \cite{Levinson}. In one dimension, there exists a near-complete characterization of Fourier frames in terms of the density of underlying samples, due primarily to Beurling \cite{BeurlingDiffOp}, Landau \cite{Landau}, Jaffard \cite{Jaffard} and Seip \cite{Seip1995}. However, in higher dimensions, the situation becomes considerably more complicated \cite{BenedettoSpiral, OlevskiiUlanovskii}. Nevertheless, Beurling's seminal paper \cite{BeurlingDiffOp} (see also \cite{BeurlingVol2}) provides a sharp sufficient condition for sampling points in multiple dimensions to give rise to a Fourier frame for the space of $\rL^2$ functions  compactly supported on a sphere. This was generalized to the spaces of $\rL^2$ functions compactly supported on any compact, convex and symmetric set by Benedetto and Wu \cite{BenedettoSpiral} (see also the work by Olevskii and Ulanovskii \cite{OlevskiiUlanovskii}).  Regarding general bounded supports in $\bbR^d$, Landau \cite{Landau} provides a necessary density condition that fails to be sufficient in general. A  recent result due to Matei and Meyer \cite{quasi} proves this density condition to be sufficient in the special case of sampling on quasicrystals. Also, some of these density-type results where extended to shift-invariant spaces by Aldroubi and Gr\"ochenig \cite{AldroubiGrochenig_SIdensity}.  However, in our work, we focus on compactly supported and square-integrable functions with supports in $\bbR^d$  which are  compact, convex and symmetric. For a more detailed review on the theory of Fourier frames and nonuniform sampling see \cite{AldroubiGrochenigSIREV, BenedettoSpiral, ChristensenFramesAMS}.

\begin{figure}
\begin{center}
\includegraphics[scale=0.8, trim=-0cm -0cm -0.cm 0cm]{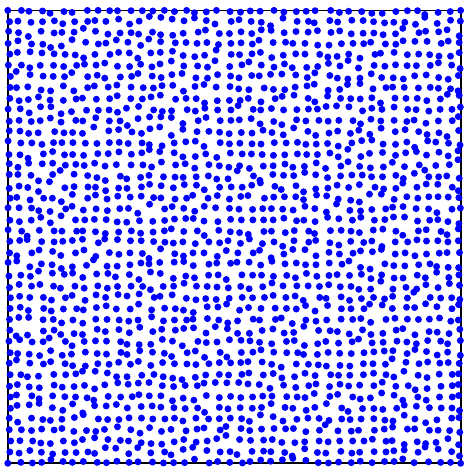}   \includegraphics[scale=0.8, trim=-0cm -0.cm -0cm 0.cm]{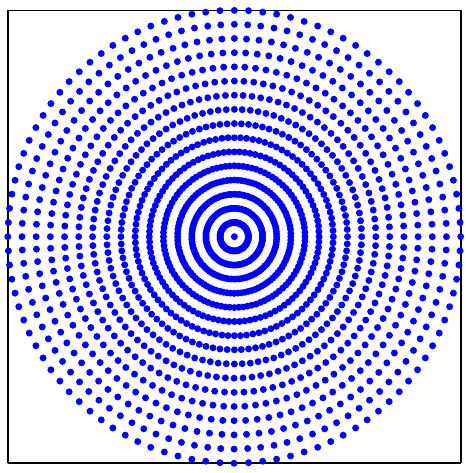}
\includegraphics[scale=0.8, trim=-0cm -0cm -0cm -0.cm]{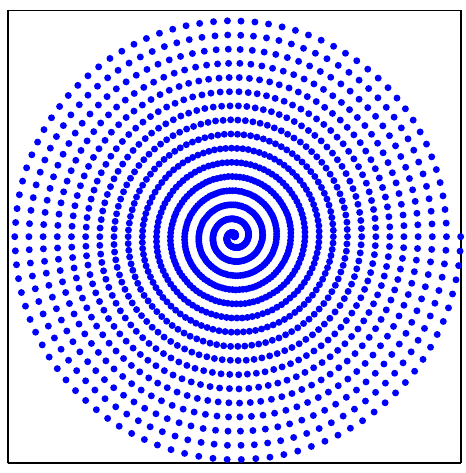}   \includegraphics[scale=0.8, trim=-0cm -0cm -0cm -0.cm]{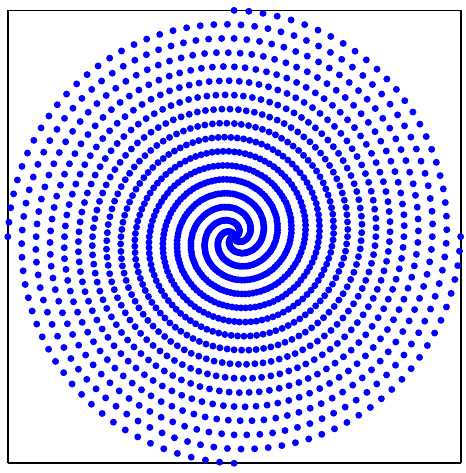}
\vspace{-0.5cm}
\end{center}
\caption{\small{Different sampling schemes:  (i) \textit{jittered sampling scheme},  a standard model when the measurements are not taken exactly on a uniform grid, often used MRI, seismology and geophysics \cite{AldroubiGrochenigSIREV, Marvasti}, (ii) \textit{polar sampling scheme} used in computed tomography \cite{Epstein}, (iii) \textit{spiral} and (iv) \textit{interleaving spiral} used in MRI \cite{spyralScienceDirect}. All of them satisfy an appropriate $(K,\delta_{E^\circ})$-density condition (see Definition \ref{K-dense}), for $E=[-1,1]^2$, $\delta_{E^\circ}<0.25$ and $K=4$.  } 
}
\label{fig:ss}
\end{figure}

\subsection{Main results of this paper}

A limitation of the results mentioned above is that they require a minimal separation between the sampling points.  In particular, clustering of sampling points deteriorates the associated frame bounds, which leads to numerical instability.  The main contribution of the first part of this paper removes the minimal separation restriction whilst keeping the sharpness of the result.  Through the use of a weighted Fourier frame approach, based on Gr\"ochenig's earlier work (see below), we adapt Beurling's result to allow for arbitrary clustering of sampling points.  Specifically, we prove the following:

\begin{theorem}\label{t:weighted_frame} 
Let $\rH= \{ f \in \rL^2(\bbR^d) : \textnormal{supp}(f) \subseteq E \}$,  
where $E\subseteq \mathbb{R}^d$ is compact, convex and symmetric.
If a countable set $\Omega\subseteq\hat{\bbR}^d$ has density  $\delta_{E^\circ}<1/4$  $($see Definition $\ref{d:delta-dense}$$)$ then
there exist weights $\mu_{\omega} > 0$ such that $\{\sqrt{\mu_\omega} e_\omega\}_{\omega\in\Omega}$  is a weighted Fourier frame for $\rH$, where $e_{\omega}(x) = \E^{\I 2 \pi \omega \cdot x} {\bf1}_{E}(x)$.  In other words,
there exist constants $A,B>0$ such that 
\bes{
\forall f\in \rH, \quad A\|{f}\|^2 \leq \sum_{\omega\in\Omega}\mu_\omega|\hat{f}(\omega)|^2 \leq B \|{f}\|^2.
}
In particular, it suffices to choose the weights $\{\mu_\omega\}_{\omega\in\Omega}$ as the measures of Voronoi regions $($see Definition $\R{Voronoi}$$)$ with respect to norm $\abs{\cdot}_{E^\circ}$ $($see $\R{norm32}$ and $\R{polar}$$)$.
\end{theorem}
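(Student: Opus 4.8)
The plan is to deduce the weighted-frame inequality from a single interpolation estimate and then to prove that estimate by marrying Beurling's sharp density theorem with a local (Gr\"ochenig-type) treatment of the clustered points. First I would set up the reduction. By Plancherel and the hypothesis $\textnormal{supp}(f)\subseteq E$ we have $\|f\|^2=\int_{\hat{\bbR}^d}|\hat f(\xi)|^2\,d\xi$ and in fact $f=\int_{\hat{\bbR}^d}\hat f(\xi)\,e_\xi\,d\xi$, so $\{e_\xi\}_{\xi\in\hat{\bbR}^d}$ is a continuous Parseval frame for $\rH$. Let $V_\omega$ be the Voronoi cell of $\omega$ with respect to $|\cdot|_{E^\circ}$ and set $\mu_\omega=|V_\omega|$; since the cells are pairwise disjoint, tile $\hat{\bbR}^d$, and (because the density is finite) each satisfies $V_\omega\subseteq B_{E^\circ}(\omega,\delta_{E^\circ})$, the piecewise-constant interpolant $A\hat f:=\sum_{\omega\in\Omega}\hat f(\omega)\,\mathbf{1}_{V_\omega}$ obeys the exact identity $\|A\hat f\|^2=\sum_{\omega\in\Omega}\mu_\omega|\hat f(\omega)|^2$. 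Consequently, if I can establish $\|\hat f-A\hat f\|\le q\,\|\hat f\|$ for all $f\in\rH$ with a constant $q<1$, the reverse triangle inequality yields $(1-q)^2\|f\|^2\le\sum_{\omega}\mu_\omega|\hat f(\omega)|^2\le(1+q)^2\|f\|^2$, which is precisely the asserted weighted frame inequality and shows the Voronoi weights work.

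I would then attack the interpolation error $\|\hat f-A\hat f\|^2=\int_{\hat{\bbR}^d}|\hat f(\xi)-\hat f(\omega(\xi))|^2\,d\xi$, where $\omega(\xi)\in\Omega$ is a nearest sample to $\xi$. The key geometric fact is that $|\xi-\omega(\xi)|_{E^\circ}\le\delta_{E^\circ}$, so $|(\xi-\omega(\xi))\cdot x|\le\delta_{E^\circ}<1/4$ for every $x\in E$. Writing $\hat f(\xi)-\hat f(\omega)=\int_E f(x)\,\E^{\I 2\pi\omega\cdot x}\big(\E^{\I 2\pi(\xi-\omega)\cdot x}-1\big)\,dx$ and using the Bernstein inequality for the Paley--Wiener space $\mathrm{PW}_E$ --- namely $\|\partial_v\hat f\|\le 2\pi|v|_{E^\circ}\|\hat f\|$, which comes from $\widehat{\partial_v\hat f}(x)=\I 2\pi(v\cdot x)f(-x)$ together with $\sup_{x\in E}|v\cdot x|=|v|_{E^\circ}$ --- along with a local Plancherel--P\'olya-type estimate, I would bound $\|\hat f-A\hat f\|$ by a multiple of $\delta_{E^\circ}\|\hat f\|$. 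The role of the Voronoi weights is now transparent: they are exactly what converts the discrete sum $\sum_\omega\mu_\omega|\hat f(\omega)|^2$ --- which, with unit weights, diverges as soon as the samples cluster --- into the clustering-insensitive quantity $\|A\hat f\|^2$; refining $\Omega$ merely shrinks the cells (equivalently, moves $\omega(\xi)$ closer to $\xi$), so clustering can only help.

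The main obstacle is reaching the \emph{sharp} threshold $\delta_{E^\circ}<1/4$: the crude bound $|\E^{\I\theta}-1|\le|\theta|$ (equivalently, Bernstein used as above) only gives the estimate for $\delta_{E^\circ}$ below a smaller constant, and the one-dimensional Kadec example shows that a pure displacement bound can never reach $1/4$ --- one must exploit that $\Omega$ is a net, not an arbitrary perturbation. This is where I would invoke Beurling's theorem (\cite{BeurlingDiffOp}, extended to compact convex symmetric bodies in \cite{BenedettoSpiral}): extract from $\Omega$ a maximal $\beta$-separated subset $\Omega'$ with $\beta<1/4-\delta_{E^\circ}$, which is relatively separated and still has density $\le\delta_{E^\circ}+\beta<1/4$, so that $\{e_{\omega'}\}_{\omega'\in\Omega'}$ is a classical Fourier frame for $\rH$; the remaining, genuinely clustered points are then handled by the Gr\"ochenig-type weighted/local machinery, contributing only nonnegatively to the lower bound while the Voronoi weights keep the Bessel (upper) bound finite. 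The delicate step --- and where I expect the real work to lie --- is stitching the two scales together so that the clustered samples around each $\omega'$ contribute in a controlled way; this non-constructive, two-step passage is also the reason the argument yields no explicit values of $A$ and $B$.
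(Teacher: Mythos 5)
Your overall architecture matches the paper's in outline (Voronoi-weighted step-function interpolant for the Bessel-type bound, Beurling's theorem applied to a separated, still sufficiently dense subset for the sharp lower bound), but the step you leave as ``stitching the two scales together'' is precisely where the argument as you sketch it breaks down, and it is not a routine application of ``Gr\"ochenig-type weighted/local machinery.'' Having extracted a maximal $\beta$-separated subset $\Omega'\subseteq\Omega$ with density still below $1/4$, Beurling gives you $\sum_{\omega'\in\Omega'}|\hat f(\omega')|^2\geq A'\|f\|^2$, but the weights in the quantity you must bound below are the Voronoi measures computed relative to the \emph{full} set $\Omega$. Because the discarded points may cluster arbitrarily close to the retained points $\omega'$, the weights $\mu_{\omega'}$ can be arbitrarily small (they carry no lower bound in terms of the separation of $\Omega'$), so the chain $\sum_{\omega\in\Omega}\mu_\omega|\hat f(\omega)|^2\geq\sum_{\omega'\in\Omega'}\mu_{\omega'}|\hat f(\omega')|^2\geq c\sum_{\omega'\in\Omega'}|\hat f(\omega')|^2$ fails at the second inequality; saying that the clustered points ``contribute only nonnegatively'' does not rescue this, since it is the separated points' weighted contribution itself that may be negligible. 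The paper flags exactly this pitfall in the remark following its Lemma \ref{l:seperated_sub}.

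The missing idea is how the paper converts the local clustering into a usable lower bound: around each point $\tilde\omega$ of the separated subsequence one takes an $E^\circ$-ball of radius $\epsilon<\eta/2$, uses continuity of $\hat f$ (extreme value theorem) to pick a point $z_{\tilde\omega}$ in that ball where $|\hat f|$ is minimal, so that \emph{every} sample $\omega\in\Omega$ in the ball satisfies $|\hat f(\omega)|\geq|\hat f(z_{\tilde\omega})|$; one then shows that the union of the Voronoi cells of all these $\omega$ contains the ball of radius $\epsilon/2$ about $\tilde\omega$, so their total weight is at least $\textnormal{meas}\bigl(\tfrac{\epsilon}{2}E^\circ\bigr)$ independently of how the cluster is distributed. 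This yields $\sum_{\omega\in\Omega}\mu_\omega|\hat f(\omega)|^2\geq\textnormal{meas}\bigl(\tfrac{\epsilon}{2}E^\circ\bigr)\sum_{\tilde\omega}|\hat f(z_{\tilde\omega})|^2$, and Beurling is then applied not to $\Omega'$ but to the new ($f$-dependent) set $\{z_{\tilde\omega}\}$, which is $(\eta-2\epsilon)$-separated and of density below $1/4$ for $\epsilon$ small. Without this (or an equivalent) device, your plan establishes the upper bound and the small-$\delta$ regime (essentially Theorem \ref{t:Groch_improved}), but not the sharp $\delta_{E^\circ}<1/4$ statement; so as written there is a genuine gap at the decisive step.
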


The $1/4$ density condition given here is sharp: if a countable set $\Omega$ does not satisfy the required density condition, then the associated family of weighted exponentials  $\{\sqrt{\mu_\omega} e_\omega\}_{\omega\in\Omega}$ does not have to give a weighted Fourier frame with the weights chosen as in Theorem \ref{t:weighted_frame}.

This result has both theoretical and practical significance.  First, it is interesting to address the issue of arbitrary clustering, since it is natural to anticipate that adding more sampling points should not impair the recovery of a function.
Second, this scenario often arises in applications.  For example, consider Fourier measurements acquired on a polar sampling scheme. By increasing the number of radial lines along which samples are acquired, the sampling points cluster at low frequencies, which deteriorates the frame bounds of the corresponding Fourier frame.  On the other hand, if we weight those points according to their relative densities, the resulting \textit{weighted} Fourier frame has controllable frame bounds.

Weighted Fourier frames, which we also refer to as weighted frames of exponentials, were studied by Gr\"ochenig \cite{GrochenigIrregular}, and later also by Gabardo \cite{Gabardo}. In \cite{GrochenigIrregular}, Gr\"ochenig presents a sufficient density condition in order for a family of exponentials to constitute a weighted Fourier frame, and provides explicit frame bounds.  This density condition is sharp in dimension $d=1$, but fails to be sharp in higher dimensions, with the  estimate on the density deteriorating linearly, and the estimates on the frame bounds, exponentially in $d$. The multidimensional result has been improved in \cite{BassGrochenigRandom}, but  under the assumption that the sampling set consists of a sequence of uniformly distributed independent random variables. In this setting, Bass and Gr\"ochenig provide probabilistic estimates.

Our work focuses on deterministic statements and provides two improvements of Gr\"ochenig's result from  \cite{GrochenigIrregular}.  First, as discussed above, in Theorem \ref{t:weighted_frame} we provide a density condition which is both sharp \textit{and} dimensionless.  Unfortunately, however, this condition does not give rise to explicit frame bounds.  Therefore, in our second result we present explicit frame bounds under a less stringent density condition than previously known:

\begin{theorem}\label{t:Groch_improved} 
Let $\rH= \{ f \in \rL^2(\bbR^d) : \textnormal{supp}(f) \subseteq E \}$, where $E\subseteq \mathbb{R}^d$ is compact.  Suppose that $\abs{\cdot}_*$ is an arbitrary norm on $\bbR^d$ and $c^*>0$ is the smallest constant for which $\abs{\cdot}\leq c^* \abs{\cdot}_*$, where $\abs{\cdot}$ denotes the Euclidean norm.
Let $\Omega\subseteq\bbR^d$ be  $\delta_*$-dense $($see Definition $\ref{d:delta-dense}$$)$ with
\be{\label{delta_improved}
\delta_*<  \frac{\log2}{2 \pi m_E c^*},
} 
where $m_E = \sup_{x\in E}|x|$. Then $\{\sqrt{\mu_\omega}e_\omega\}_{\omega\in\Omega}$ is a weighted Fourier frame for $\rH$ with the weights defined as the measures of Voronoi regions with respect to norm $\abs{\cdot}_*$.  The weighted Fourier frame bounds $A,B > 0$ satisfy   
\eas{
\sqrt{A} \geq 2-\exp(2 \pi m_E \delta_* c^*) ,   \qquad   \sqrt{B} \leq \exp(2 \pi m_E \delta_* c^*) < 2.
}
\end{theorem}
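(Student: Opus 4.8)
The plan is to reduce the weighted-frame inequality to a perturbation of the trivial tight frame obtained from a fundamental domain, exactly in the spirit of Gr\"ochenig's original argument, but tracking the Euclidean-to-$\abs{\cdot}_*$ comparison constant $c^*$ carefully so that the only dimension dependence that survives enters through $m_E$ (which, for $E$ contained in a sphere, is the radius and hence dimensionless). Concretely, let $\{V_\omega\}_{\omega\in\Omega}$ denote the Voronoi regions with respect to $\abs{\cdot}_*$ and set $\mu_\omega = |V_\omega|$. For $f \in \rH$ write $\hat f(\omega) = \int_E f(x)\E^{-\I 2\pi\omega\cdot x}\,\D x$ and compare $\sum_\omega \mu_\omega |\hat f(\omega)|^2$ with $\sum_\omega \int_{V_\omega} |\hat f(\xi)|^2\,\D\xi = \|\hat f\|^2 = \|f\|^2$, the last equality by Plancherel since $\{V_\omega\}$ tiles $\hat\bbR^d$ up to measure zero. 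Thus it suffices to estimate
\bes{
\Abs{ \sum_{\omega\in\Omega}\int_{V_\omega}\big( |\hat f(\omega)|^2 - |\hat f(\xi)|^2\big)\,\D\xi }.
}

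The key step is a pointwise bound on $|\hat f(\omega) - \hat f(\xi)|$ for $\xi \in V_\omega$. Since $\xi$ lies in the Voronoi cell of $\omega$, we have $\abs{\xi-\omega}_* \le \delta_*$, hence $\abs{\xi-\omega} \le c^*\delta_*$. Writing $\hat f(\omega) - \hat f(\xi) = \int_E f(x)\big(\E^{-\I 2\pi\omega\cdot x} - \E^{-\I 2\pi\xi\cdot x}\big)\D x$ and using $|\E^{\I a} - \E^{\I b}| \le |a-b|$ together with $|x| \le m_E$ on $E$ gives the crude bound $|\E^{-\I 2\pi\omega\cdot x} - \E^{-\I 2\pi\xi\cdot x}| \le 2\pi m_E \abs{\omega - \xi} \le 2\pi m_E c^* \delta_*$; however, to recover the sharper exponential constants stated in the theorem one should instead expand $\E^{-\I 2\pi(\xi-\omega)\cdot x}$ as a power series and sum, obtaining $\|e_\xi - e_\omega\|_{\mathrm{op}} \le \E^{2\pi m_E c^*\delta_*} - 1 =: \gamma$, where $e_\omega$ is regarded as the evaluation-of-Fourier-transform functional. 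This is the standard trick (cf.\ Gr\"ochenig \cite{GrochenigIrregular}): estimate $\|\sum_\omega \mu_\omega \langle f, e_\omega\rangle e_\omega - f\|$ by comparing each term with the integral over $V_\omega$ of $\langle f, e_\xi\rangle e_\xi$, bounding the difference by $\gamma(2 + \gamma)\|f\|^2$ after a Cauchy--Schwarz / triangle-inequality manipulation, and noting $\gamma < 1$ precisely when $\delta_* < \frac{\log 2}{2\pi m_E c^*}$, which is hypothesis \eqref{delta_improved}.

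Assembling the pieces: the frame operator $S f = \sum_\omega \mu_\omega \langle f, e_\omega\rangle e_\omega$ satisfies $\|S - I\| \le 1 - (2 - \E^{2\pi m_E\delta_* c^*})^2 < 1$ — or, stated via the quadratic form, $\big( 2 - \E^{2\pi m_E\delta_* c^*}\big)^2 \|f\|^2 \le \langle Sf, f\rangle \le \E^{4\pi m_E\delta_* c^*}\|f\|^2$ — which is exactly the claimed pair of bounds $\sqrt A \ge 2 - \exp(2\pi m_E\delta_* c^*)$ and $\sqrt B \le \exp(2\pi m_E\delta_* c^*) < 2$. I expect the main obstacle to be the bookkeeping in the perturbation estimate: one must be careful that the comparison of $\sum_\omega \mu_\omega\langle f, e_\omega\rangle e_\omega$ with $\int \langle f, e_\xi\rangle e_\xi\,\D\xi$ is done at the level of operators (or of the associated sesquilinear forms) rather than naively term by term, so that the cross terms are handled with a single Cauchy--Schwarz step and the constant comes out as $\gamma(2+\gamma)$ rather than something worse; getting the exponential (rather than linear) dependence on $m_E c^*\delta_*$ requires the series expansion of the exponential in the bound on $\|e_\omega - e_\xi\|$, and one must check that the resulting geometric series indeed sums to $\E^{2\pi m_E c^*\delta_*} - 1$ uniformly over $f$ in the unit ball of $\rH$.
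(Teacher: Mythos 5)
You set up the same first reduction as the paper: compare the quadratic form $\sum_{\omega\in\Omega}\mu_\omega|\hat f(\omega)|^2$ with $\|\hat f\|^2=\|f\|^2$ using the Voronoi tiling and Plancherel, so that everything hinges on bounding $\sum_{\omega\in\Omega}\int_{V_\omega}|\hat f(\xi)-\hat f(\omega)|^2\,\D\xi$ by $\bigl(\exp(2\pi m_E\delta_*c^*)-1\bigr)^2\|f\|^2$. The gap is in how you propose to get that bound. Your key estimate is a per-pair bound $\|e_\xi-e_\omega\|_{\mathrm{op}}\leq \E^{2\pi m_E c^*\delta_*}-1=:\gamma$, i.e.\ $|\hat f(\xi)-\hat f(\omega)|\leq\gamma\|f\|$ uniformly for $\xi\in V_\omega$ (as stated this even misses a factor $\sqrt{\mathrm{meas}(E)}$, since the norm of the functional $f\mapsto\hat f(\xi)-\hat f(\omega)$ on $\rL^2(E)$ is $\|e_\xi-e_\omega\|_{\rL^2(E)}$, not the sup of the kernel). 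But a cell-wise uniform bound of this type cannot be integrated: it gives $\sum_{\omega}\int_{V_\omega}|\hat f(\xi)-\hat f(\omega)|^2\,\D\xi\leq\gamma^2\|f\|^2\sum_\omega\mu_\omega$, and $\sum_\omega\mu_\omega=\mathrm{meas}(\hat{\bbR}^d)=\infty$ because the Voronoi cells tile all of frequency space. Rearranging cross terms ``at the level of operators'' does not repair this; summing cell-operator norms of size $\mu_\omega\gamma(2+\gamma)$ diverges for the same reason.

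The missing idea is the mechanism that makes the local oscillation globally square-integrable. In the paper one Taylor-expands the entire function $\hat f$ about $\hat y$ (not the kernel $\E^{-\I2\pi(\xi-\omega)\cdot x}$ pointwise in $x$) and applies Cauchy--Schwarz with a free geometric weight $c^{\pm|\alpha|_1}$: one factor is the constant $\exp(c(\delta_*c^*)^2)-1$, while the other, $\sum_{\alpha\neq0}c^{-|\alpha|_1}|D^\alpha\hat f(\hat y)|^2/\alpha!$, depends on $\hat y$ and integrates over all of $\hat{\bbR}^d$ to exactly $\bigl(\exp((2\pi m_E)^2/c)-1\bigr)\|f\|^2$ by Parseval ($\|D^\alpha\hat f\|^2=\int_E(2\pi x)^{2\alpha}|f(x)|^2\,\D x$, a Bernstein-type identity) and the multinomial theorem; optimizing $c=2\pi m_E/(\delta_*c^*)$ then yields $\|\hat f-\chi\|\leq(\exp(2\pi m_E\delta_*c^*)-1)\|f\|$ and the stated frame bounds via the triangle inequality. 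Without this step (or an equivalent local estimate whose square integrates globally, e.g.\ a Wirtinger/Poincar\'e-type inequality applied to derivatives of $\hat f$ in $\rL^2$), the exponential constants you announce are not reachable. There is also an internal inconsistency in your bookkeeping: a quadratic-form error of $\gamma(2+\gamma)\|f\|^2=(\E^{4\pi m_E\delta_*c^*}-1)\|f\|^2$ would only give $A\geq 2-\E^{4\pi m_E\delta_*c^*}$, forcing the strictly stronger density condition $\delta_*<\log2/(4\pi m_Ec^*)$, rather than the theorem's $\sqrt{A}\geq 2-\E^{2\pi m_E\delta_*c^*}$ under $\delta_*<\log2/(2\pi m_Ec^*)$.
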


Taking $\abs{\cdot}_{*} = \abs{\cdot}$ for simplicity, where $\abs{\cdot}$ is the Euclidean norm, we see that the key estimate \R{delta_improved}, which is a refinement of Gr\"ochenig's, deteriorates with dimension only for certain function supports $E$.  Specifically, it depends on the radius of the largest sphere in which $E$ is contained, i.e. it depends on $m_E$.  In particular, \R{delta_improved} is dimensionless when a function has a compact support contained in the unit Euclidean ball $\cB_{1}$, since then $m_E=1$.  In this case, Theorem \ref{t:weighted_frame} gives the sharp sufficient condition $\delta < 0.25$ (where $\delta$ corresponds to the Euclidean norm) but without explicit frame bounds.  On the other hand, Theorem \ref{t:Groch_improved} provides explicit frame bounds under the slightly stronger, but dimension independent, condition $\delta < \frac{\log2}{2 \pi} \approx 0.11$.

We note at this stage that, whilst Gr\"ochenig was arguably the first to rigorously study weighted Fourier frames in sampling, the use of weights is commonplace in MRI reconstructions, where they are often referred to as `density compensation factors' (see \cite{spyralScienceDirect, FesslerFastIterativeMRI} and references therein).  However, such approaches are often heuristic.  Building on Gr\"ochenig's earlier work, our results provide further mathematical theory supporting their use.

In practice, one only has access to a finite number of samples.  In the final part of this paper, we consider a reconstruction algorithm for this problem, based on the generalized sampling (GS) framework introduced in \cite{BAACHShannon} (see also \cite{BAACHAccRecov,BAACHOptimality,AHPWavelet}).  In particular, in \S \ref{s:NUGS}, we give the third main result of this paper, Theorem \ref{t:admissible},  which shows that stable, quasi-optimal reconstruction is possible in \textit{any} subspace $\rT\subseteq\rH$ provided the samples satisfy the same density conditions as in Theorems \ref{t:weighted_frame} and \ref{t:Groch_improved}, and additionally, provided the samples possess a sufficiently large bandwidth, in a sense we define later. Hence, we extend the analysis of the framework considered in \cite{1DNUGS}---so-called nonuniform generalized sampling (NUGS)---to the multidimensional setting.

We also remark that our analog recovery model is the same as that used with great success in the recent work of Guerquin-Kern, Haberlin, Pruessmann and Unser \cite{PruessmannUnserMRIFast} on iterative, wavelet-based reconstructions for MRI. Moreover, the popular iterative reconstruction algorithm of Sutton, Noll and Fessler \cite{FesslerFastIterativeMRI} for non-Cartesian MRI is a special case of NUGS based on a digital signal model. Therefore, the results we prove in this paper provide theoretical foundations for the success of those algorithms as well.  Our results also improve existing bounds for the well-known ACT (Adaptive weights, Conjugate gradients, Toeplitz) algorithm in nonuniform sampling \cite{FeichtingerGrochenigIrregular,FeichtingerEtAlEfficientNonuniform,GrochenigModernSamplingBook,GrochenigStrohmerMarvasti}, which can also be viewed as a particular case of NUGS. For further discussion, see \S \ref{ss:implementation_and_relation} of this paper.

The remainder of this paper is organized as follows.  In \S\ref{s:weighted_frames}  we consider weighted Fourier frames and the proofs of Theorems \ref{t:weighted_frame} and \ref{t:Groch_improved}.  We discuss the NUGS framework in \S \ref{s:NUGS}, and show stable and accurate recovery by using the results from \S \ref{s:weighted_frames}.  Next in \S \ref{s:dense_ss}  we construct several popular sampling schemes so that they satisfy appropriate density conditions.  Finally, we illustrate our theoretical results in \S \ref{s:num_results} with some numerical experiments.

\section{Weighted frames of exponentials} \label{s:weighted_frames}

\subsection{Background material and preliminaries} \label{ss:background}

Let 
\bes{
\rH= \left\{ f \in \rL^2(\bbR^d) : \textnormal{supp}(f) \subseteq E \right\}
}
be the Hilbert space of square-integrable functions supported on a compact set $E\subseteq\mathbb{R}^d$, with  the standard $\rL^2$-norm $\nm{\cdot}$ and $\rL^2$-inner product $\ip{\cdot}{\cdot}$. The $d$-dimensional Euclidean vector space is denoted by $\mathbb{R}^d$, and, following a standard convention, $\mathbb{\hat{R}}^d$  is used whenever $\mathbb{R}^d$ is considered as a frequency domain. For $f\in\rH$,  the Fourier transform is defined by
\bes{
\hat{f}(\omega) =\int_{E} f(x) \E^{-\I  2\pi \omega \cdot x} \D x,\quad \omega \in \mathbb{\hat{R}}^d,
}
where $\cdot$ stands for the Euclidean inner product. We also use the following notation
\bes{%\label{e}
e_{\omega}(x) = \E^{\I 2 \pi \omega \cdot x} {\bf1}_{E}(x),
}
where ${\bf1}_{E}$ is the indicator function of the set $E$. Note that $\hat{f}(\omega) = \ip{f}{e_\omega}$.

Let $\abs{\cdot}_*$ denote an arbitrary norm  on $\bbR^d$.  Note that for every such norm the set $\{x\in\bbR^d:|x|_*\leq1\}$ is convex, compact and symmetric. Moreover, all norms on a finite-dimensional space are equivalent to the Euclidean norm, which we denote simply by $\abs{\cdot}$. Hence,  by $c_*,c^*>0$, we denote the sharp constants for which
\bes{%\label{norm_const}
\forall x\in\bbR^d, \quad c_*|x|_* \leq |x|\leq c^*|x|_*.
}

Conversely, if $E\subseteq\mathbb{R}^d$ is a compact, convex and symmetric set, the function $\abs{\cdot}_E:\mathbb{R}^d\rightarrow \mathbb{R}$ defined by 
\be{\label{norm32}
\forall x\in\bbR^d, \quad |x|_E=\inf\{a>0 : x\in aE\},
}
is a norm on  $\mathbb{R}^d$ \cite{BenedettoSpiral}. Here, $E$ is the unit ball with the respect to the norm $\abs{\cdot}_E$, i.e. 
\bes{
E=\{x\in\bbR^d:|x|_E\leq1\}.
}
Also, for such set $E\subseteq\mathbb{R}^d$, 
its polar set is defined as
\be{\label{polar}
E^\circ=\{ \hat y \in \mathbb{\hat{R}}^d : \forall x \in E,\ x\cdot\hat y\leq1 \}.
}
Note that $E^\circ$ is itself a convex, compact and symmetric set in $\mathbb{\hat{R}}^d$, which is the unit ball with respect to the norm $\abs{\cdot}_{E^\circ}$. Also observe that,  if $E$ is the unit ball in the Euclidean norm, which we denote by $\cB_1$, then $\cB_1=\cB_1^{\circ}$ and $\abs{\cdot}_{\cB_1}=\abs{\cdot}_{\cB_1^{\circ}}=\abs{\cdot}$.

Throughout the paper, we denote $\ell^p$-norm by $\abs{\cdot}_p$, i.e.\ for $x\in \bbR^d$, $|x|_p=\left(\sum_{j=1}^d |x_j|^p\right)^{1/p}$. Hence $\abs{\cdot}_2=\abs{\cdot}_{\cB_1}=\abs{\cdot}$. Also, we recall the well-know inequality
\be{\label{norms_ineq}
\forall x \in \bbR^d, \quad |x|_{q}\leq|x|_{r}\leq d^{1/r-1/q} |x|_{q}, \quad q>r>0.
}

Now, let $\Omega\subseteq\mathbb{\hat{R}}^d$ be a countable set of sampling points, which we also refer to as a \textit{sampling scheme}. The set $\Omega$ is said to be \textit{separated} with respect to the $\abs{\cdot}_{*}$-norm if 
there exists a constant $\eta>0$ such that
\bes{
\forall \omega, \lambda \in \Omega , \quad \omega \neq \lambda, \quad |\omega-\lambda|_{*} \geq \eta,
}
and it is \textit{relatively separated} if it is a finite union of separated sets.
It is clear that, if $\Omega$ is separated in the $\abs{\cdot}_{*}$-norm then it is separated in any norm on $\mathbb{\hat{R}}^d$ and vice-versa.

Next, we introduce the crucial notion of \textit{density} of a countable set $\Omega\subseteq\mathbb{\hat{R}}^d$. This definition originates in Beurling's work \cite{BeurlingDiffOp} and it is used frequently in multidimensional nonuniform sampling literature. 

\begin{definition}\label{d:delta-dense}
Let $\Omega$ be a sampling scheme contained in a closed, simply connected set $Y \subseteq\mathbb{\hat{R}}^d$ with $0$ in its interior. Let $\abs{\cdot}_*$ be an arbitrary norm on $\bbR^d$, and let $\delta_*>0$.  We say that $\Omega$ is $\delta_*$-dense in the domain $Y$  if
\bes{
\delta_*=\sup_{\hat{y}\in Y} \inf_{\omega\in \Omega}|\omega-\hat{y}|_*.
}
If $\abs{\cdot}_{*} = \abs{\cdot}_{E}$ for a compact, convex and symmetric set $E$, then we write $\delta_{E}$. Also, to emphasise the sampling scheme, where necessary we use notation $\delta_*(\Omega)$.
\end{definition}

Note that the $\delta_*$-density condition from the Definition \ref{d:delta-dense} is equivalent to the \textit{$\delta_*$-covering condition}: there exists $\delta_*\in(0,1/4)$ such that for all  $\rho\geq \delta_*$ it holds that
\bes{
Y \subseteq \bigcup_{\omega\in\Omega}\left\{x\in\bbR^d:|x-\omega|_*\leq \rho \right\}.
}

Before we define weighted frames, let us discuss classical frames of exponentials.
A countable family of functions $\{e_\omega\}_{\omega\in\Omega}\subseteq\rH$  is said to be a \textit{Fourier frame} for $\rH$ if there exist constants $A,B>0$ such that 
\be{\label{frame}
\forall f\in \rH, \quad A\|{f}\|^2 \leq \sum_{\omega\in\Omega}|\hat{f}(\omega)|^2 \leq B \|{f}\|^2.
}
The constants $A$ and $B$ are called \textit{upper and lower frame bounds}, respectively. If $\{e_\omega\}_{\omega\in\Omega}$ is a frame, then the \textit{frame operator} $\cS:\rH\rightarrow \rH$ is defined by 
\bes{%\label{frame_operator}
\forall f\in\rH,\quad \cS : f \mapsto \cS f = \sum_{\omega \in \Omega} \hat{f}(\omega) e_{\omega}.
}
Since the inequality \R{frame} holds, the frame operator $\cS$ is a topological isomorphism with the inverse $\cS^{-1}:\rH\rightarrow \rH$, and also 
\be{\label{rec_formula}
\forall f\in\rH, \quad f = \sum_{\omega \in \Omega} \ip{\cS^{-1} f }{e_{\omega}} e_{\omega}.
}
Formula \R{rec_formula}, with the appropriately truncated sum, is sometimes used for signal reconstruction \cite{BenedettoSpiral}.  However, for the types of sets $\Omega$ considered in practice, finding the inverse frame operator $\cS^{-1}$ is often a nontrivial task.  Typically, this renders such an approach infeasible in more than one dimension.  

If the relation \R{frame} holds with $A=B$, the family $\{e_\omega\}_{\omega\in\Omega}$ is called a \textit{tight frame}, and if $A=B=1$, this family forms  an orthonormal basis for $\rH$. In these cases, the relation \R{frame} is  known as (generalized) Parseval's equality. Also, then the frame operator becomes  $\cS= A\cI$, where $\cI$ is the identity operator on $\rH$, and the formula \R{rec_formula} represents the Fourier series of $f$.  Moreover, the appropriately truncated Fourier series converges to $f$ on $\rH$. This leads to a considerably simpler framework in the case when the samples are acquired uniformly, corresponding to an orthonormal basis or a tight frame  for $\rH$.

In \cite{BeurlingDiffOp}, Beurling provides a sufficient density condition for a nonuniform set of sampling points to give a Fourier frame for $\rH$ consisting of functions supported on the unit sphere in the Euclidean norm. In what follows, we use a variation of Beurling's result given by Benedetto \& Wu in \cite{BenedettoSpiral}, and also by Olevskii \& Ulanovskii \cite{OlevskiiUlanovskii}, which is a generalization to arbitrary convex, compact and symmetric domains:

\begin{theorem}
\label{t:Beurling_thm}
Let $E\subseteq\bbR^d$ be compact, convex and symmetric set. If  $\Omega\subseteq\mathbb{\hat{R}}^d$ is  relatively separated and $\delta_{E^\circ}$-dense  in the domain  $Y = \mathbb{\hat{R}}^d$ with $\delta_{E^\circ}<1/4$,
then $\{e_\omega\}_{\omega\in\Omega}$ is a Fourier frame for $\rH$.
\end{theorem}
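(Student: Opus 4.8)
The plan is to work on the Fourier side with $F=\hat f$ and prove the equivalent \emph{sampling inequality}. Since $F$ ranges over the (Paley--Wiener-type) subspace of $\rL^2(\hat\bbR^d)$ consisting of functions whose inverse Fourier transform is supported in $E$, and $\hat f(\omega)=\ip{f}{e_\omega}=F(\omega)$, the claim is: there exist $A,B>0$ with $A\nm{F}^2\le\sum_{\omega\in\Omega}\abs{F(\omega)}^2\le B\nm{F}^2$ for all such $F$. The upper (Bessel) bound uses only relative separation of $\Omega$, not the density condition: it is the classical Plancherel--P\'olya inequality, which states precisely that for $F=\hat f$ with $f$ supported in the bounded set $E$ and $\Omega$ a finite union of separated sets one has $\sum_{\omega\in\Omega}\abs{F(\omega)}^2\le B\nm{F}^2$ (bound $\abs{F(\omega)}^2$ by a fixed multiple of the local $\rL^2$-mass of $F$ near $\omega$, then sum using bounded overlap). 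So the hypothesis $\delta_{E^\circ}<1/4$ enters only through the lower bound, which I would get from a reproducing identity together with a quadrature estimate.

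Fix a dilation factor $\gamma>1$ (to be tuned) and a bump $\varphi\in C^\infty_c(\bbR^d)$ with $\varphi\equiv1$ on $E$ and $\textnormal{supp}\,\varphi\subseteq\gamma E$; let $\psi$ be the Fourier transform of $\varphi$, a rapidly decreasing function on $\hat\bbR^d$. As $\varphi=1$ on $\textnormal{supp}(f)\subseteq E$, one has the exact reproducing identity $F=F*\psi$, i.e.\ $F(\hat y)=\int_{\hat\bbR^d}\psi(\hat y-\hat z)F(\hat z)\,\D\hat z$, for every admissible $F$. Now replace this integral by the Voronoi-weighted Riemann sum $(\mathcal{Q}F)(\hat y):=\sum_{\omega\in\Omega}\mu_\omega\,\psi(\hat y-\omega)\,F(\omega)$, where $\mu_\omega$ is the Lebesgue measure of the Voronoi region of $\omega$ with respect to $\abs{\cdot}_{E^\circ}$ (Definition \R{Voronoi}), and put $\mathcal{R}F:=F-\mathcal{Q}F$. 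The $\delta_{E^\circ}$-density hypothesis forces $\abs{\hat z-\omega}_{E^\circ}\le\delta_{E^\circ}$ whenever $\hat z$ lies in the Voronoi region of $\omega$, and one estimates the remainder region by region: on each region the integrand $\hat z\mapsto\psi(\hat y-\hat z)F(\hat z)$ differs from its value at the site $\omega$ by at most a multiple of $\delta_{E^\circ}$ times its gradient, whose $\rL^2$-size is controlled through Bernstein-type inequalities. The geometric input is the polar duality $\abs{\hat z\cdot x}\le\abs{\hat z}_{E^\circ}\abs{x}_E$, combined with $\abs{x}_E\le1$ on $E$ and $\abs{x}_E\le\gamma$ on $\textnormal{supp}\,\varphi$: these give the \emph{dimension-free} estimates $\nm{\partial_v F}=2\pi\nm{(v\cdot x)f}\le 2\pi\big(\sup_{x\in E}\abs{v\cdot x}\big)\nm{f}=2\pi\abs{v}_{E^\circ}\nm{F}$, and likewise $\nm{\partial_v\psi}\le 2\pi\gamma\abs{v}_{E^\circ}\nm{\psi}$. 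Assembling these and optimising over $\gamma$ should yield $\nm{\mathcal{R}F}\le\theta\nm{F}$ with $\theta=\theta(\delta_{E^\circ})<1$ for $\delta_{E^\circ}<1/4$.

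Granting the quadrature estimate, the lower bound follows at once: $(1-\theta)\nm{F}\le\nm{\mathcal{Q}F}=\nm{\sum_{\omega\in\Omega}\mu_\omega F(\omega)\psi(\cdot-\omega)}$, and since $\Omega$ is relatively separated and $\psi$ decays rapidly, a Schur-test argument gives a synthesis bound $\nm{\sum_\omega c_\omega\psi(\cdot-\omega)}^2\le D\sum_\omega\abs{c_\omega}^2$; taking $c_\omega=\mu_\omega F(\omega)$ and using the uniform bound $\mu_\omega\le\mu_{\max}<\infty$ (every Voronoi region sits inside a ball of $\abs{\cdot}_{E^\circ}$-radius $\delta_{E^\circ}$) gives $\nm{\mathcal{Q}F}^2\le D\mu_{\max}^2\sum_\omega\abs{F(\omega)}^2$. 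Hence $\sum_{\omega\in\Omega}\abs{F(\omega)}^2\ge\frac{(1-\theta)^2}{D\mu_{\max}^2}\nm{F}^2$, so $\{e_\omega\}_{\omega\in\Omega}$ is a Fourier frame for $\rH$ with lower bound $A=(1-\theta)^2/(D\mu_{\max}^2)$ and upper bound the Plancherel--P\'olya constant $B$.

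The delicate step --- indeed the only place the hypothesis is used in sharp form --- is the quadrature estimate $\nm{\mathcal{R}F}\le\theta\nm{F}$ with $\theta<1$ all the way up to $\delta_{E^\circ}=1/4$. A crude treatment of the cell oscillation (diameter times a supremum bound on the gradient) only secures $\theta<1$ for $\delta_{E^\circ}$ comfortably below $1/4$, and this gap between elementary estimates and Beurling's sharp threshold is exactly what makes the statement nontrivial; closing it forces one either to let $\gamma\downarrow1$ in a carefully controlled way while tracking the competing Bernstein and Schur constants, or --- following Beurling --- to replace the explicit quadrature by the \emph{balayage} of the measure $\sum_{\omega\in\Omega}\delta_\omega$ onto the exponential system, $1/4$ being precisely the threshold for which that sweeping operation stays bounded. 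In one dimension this reduces to the $1/4$-computation underlying Kadec's theorem; the point of the convex-body formulation of Benedetto--Wu and of Olevskii--Ulanovskii is that the polar-norm duality above lets the whole argument go through with constants independent of the dimension.
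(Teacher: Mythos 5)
First, note that the paper does not prove Theorem \ref{t:Beurling_thm} at all: it is imported as a known result of Beurling, in the convex-body form due to Benedetto--Wu and Olevskii--Ulanovskii, so there is no internal proof to compare against. Judged on its own merits, your proposal has a genuine gap precisely where the theorem's content lies. Your upper bound (Plancherel--P\'olya via relative separation) is fine, and the lower-bound machinery you set up --- reproducing identity $F=F*\psi$, Voronoi-weighted quadrature $\mathcal{Q}F$, Bernstein estimates in the polar norm $\abs{\cdot}_{E^\circ}$, Schur synthesis bound --- is essentially the Gr\"ochenig-type argument that this paper does carry out in the proof of Theorem \ref{t:Groch_improved}. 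But that mechanism, with any choice of bump and any tuning of $\gamma$, is known to produce a contraction $\nm{\mathcal{R}F}\le\theta\nm{F}$ with $\theta<1$ only for densities on the order of $\log 2/(2\pi)\approx 0.11$ (for $E=\cB_1$), not up to the sharp threshold $1/4$. You acknowledge exactly this ("a crude treatment \dots only secures $\theta<1$ for $\delta_{E^\circ}$ comfortably below $1/4$") and then defer to "letting $\gamma\downarrow 1$ carefully" or to Beurling's balayage, neither of which you carry out. The first is not a matter of bookkeeping: the Taylor/quadrature route saturates strictly below $1/4$, which is why the paper presents Theorem \ref{t:Groch_improved} (explicit bounds, non-sharp density) and Theorem \ref{t:Beurling_thm} (sharp density, no explicit bounds) as genuinely different results. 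The second --- balayage of $\sum_\omega\delta_\omega$, or in the Benedetto--Wu/Olevskii--Ulanovskii treatment a duality argument for the convex body $E$ --- is the actual proof of the theorem, and invoking its name is not a substitute for it.

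So, as written, your argument proves a correct but strictly weaker statement: $\{e_\omega\}$ is a Fourier frame for relatively separated $\Omega$ with $\delta_{E^\circ}$ below some constant noticeably smaller than $1/4$ (essentially Theorem \ref{t:Grocg_orig}/\ref{t:Groch_improved} with unit weights, where relative separation keeps the Voronoi weights bounded below so they can be discarded). To prove the stated theorem you would need the balayage/duality argument itself, or a reduction to the cited results; a quadrature contraction cannot reach the $1/4$ threshold, which in dimension one already encodes the Kadec-type computation rather than a cell-oscillation estimate.
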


Beurling  \cite{BeurlingDiffOp} also shows that this result is sharp in the sense that there exists a countable set with the density $\delta_{E^\circ}=1/4$, where $E$ is the unit ball in the Euclidean metric, which does not satisfy the lower frame condition in \R{frame} (see also \cite[Prop.\ 4.1]{OlevskiiUlanovskii}).

Now we define weighted frames of exponentials:

\begin{definition}\label{weighted_Fourier}
A countable family of functions $\{\sqrt{\mu_\omega}e_\omega\}_{\omega\in\Omega}$  is a weighted Fourier frame for $\rH$, with weights $\{ \mu_\omega \}_{\omega \in \Omega}$, $\mu_\omega > 0$, if there exist constants $A,B>0$ such that 
\be{\label{w_frame}
\forall f\in \rH, \quad A\|{f}\|^2 \leq \sum_{\omega\in\Omega}\mu_\omega|\hat{f}(\omega)|^2 \leq B \|{f}\|^2.
}
\end{definition}

As discussed, the use of weights  is to compensate for arbitrary clustering in $\Omega$.  In order to define appropriate weights  $\{\mu_\omega\}_{\omega\in\Omega}$  corresponding to the sampling scheme $\Omega$, in this paper, we use measures of Voronoi regions.  This is a standard practice in nonuniform sampling \cite{AldroubiGrochenigSIREV, Rasche99}. 

\begin{definition}\label{Voronoi}
Let $\Omega$ be a set of distinct points in  $Y \subseteq\mathbb{\hat{R}}^d$ and let $\abs{\cdot}_*$ be an arbitrary norm on $\bbR^d$. The Voronoi region at $\omega\in\Omega$, with respect to the  norm $\abs{\cdot}_*$ and in the domain $Y$, is given by
\bes{
V^{*}_\omega = \left \{ \hat{y} \in Y : \forall \lambda \in \Omega,\ \lambda\neq \omega ,\ |\omega-\hat{y}|_{*} \leq |\lambda-\hat{y}|_{*} \right\}.
}
\end{definition}
The Lebesgue measure of the Voronoi region $V^{*}_\omega$ we denote as
\bes{
\text{meas}\left(V^{*}_\omega\right)=\int_{Y}{\bf1}_{V^{*}_\omega}(\hat{y})\D \hat{y}. 
}

In \cite{GrochenigIrregular}, Gr\"ochenig provides explicit frame bounds for weighted Fourier frames, provided the sample points $\Omega$ are sufficiently dense. In one dimension, the condition on the density is sharp, i.e., sampling points with density such that $\delta<1/4$ give rise to a weighted Fourier frame, but sets of points with lower density (i.e.\ bigger delta) do not necessarily yield a weighted Fourier frame. However, the sharpness of the result is lost in higher dimensions.

Here we state Gr\"ochenig's multidimensional result \cite[Prop.\ 7.3]{GrochenigModernSamplingBook}, which is a more recent reformulation of \cite[Thm.\ 5]{GrochenigIrregular}:

\begin{theorem}\label{t:Grocg_orig} 
Let $\rH=\{ f \in \rL^2(\bbR^d) : \textnormal{supp}(f) \subseteq E \}$, where  $E=[-1,1]^d$. If $\Omega\subseteq\mathbb{\hat{R}}^d$ is a $\delta_{\cB_1}$-dense set of distinct points such that
\be{\label{Grochenig_delta}
\delta_{\cB_1}   < \frac{\log2}{2\pi d},
}
then $\{\sqrt{\mu_\omega} e_\omega\}_{\omega\in\Omega}$ is a weighted Fourier frame for $\rH$, where the weights are defined as measures of the Voronoi regions of the points $\Omega$ with respect to the Euclidean norm. The weighted frame bounds $A,B > 0$ satisfy   
\eas{
\sqrt{A} \geq  2-\E^{2\pi\delta_{\cB_1} d} ,\qquad \sqrt{B} \leq   \E^{2\pi\delta_{\cB_1} d} < 2.
} 
\end{theorem}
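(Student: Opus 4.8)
The plan is to realise the weighted sampling operator as a small perturbation of a linear isometry, and to control the size of the perturbation by a power‑series estimate in the spirit of Bernstein's inequality for band‑limited functions. Concretely, since $\textnormal{supp}(f)\subseteq E=[-1,1]^d$, Plancherel's theorem gives $\nm{f}^2=\int_{\hat{\bbR}^d}|\hat f(\omega)|^2\,\D\omega$, so the map $F\colon\rH\to\rL^2(\hat{\bbR}^d)$, $Ff=\hat f$, is a linear isometry. On the other hand, because the Voronoi regions $V_\omega$ (taken with respect to the Euclidean norm $\abs{\cdot}=\abs{\cdot}_{\cB_1}$) tile $\hat{\bbR}^d$ up to a null set, the map $Tf=\sum_{\omega\in\Omega}\hat f(\omega){\bf1}_{V_\omega}$ satisfies $\nm{Tf}^2=\sum_{\omega\in\Omega}\mu_\omega|\hat f(\omega)|^2$ and $(Tf)(y)=\hat f(\omega_y)$ for a.e.\ $y$, where $\omega_y\in\Omega$ is a nearest sampling point to $y$. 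Writing $\delta:=\delta_{\cB_1}$, the whole theorem then reduces to the single operator estimate $\nm{T-F}\leq\E^{2\pi\delta d}-1$: since $F$ is an isometry, the triangle inequality gives $(2-\E^{2\pi\delta d})\nm f\leq\nm{Tf}\leq\E^{2\pi\delta d}\nm f$ for all $f\in\rH$, and squaring reproduces \R{w_frame} with precisely the stated constants $\sqrt A=2-\E^{2\pi\delta d}$ and $\sqrt B=\E^{2\pi\delta d}$; the hypothesis \R{Grochenig_delta} is exactly what forces $\E^{2\pi\delta d}<2$, i.e.\ $A>0$.

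To bound $T-F$, fix $y$ and set $u_y:=y-\omega_y$; since $\Omega$ is $\delta$‑dense and $y\in V_{\omega_y}$ we have $\abs{u_y}=\abs{u_y}_{\cB_1}\leq\delta$, hence $|(u_y)_j|\leq\delta$ for every $j$. Then
\[
(Tf-Ff)(y)=\hat f(\omega_y)-\hat f(y)=\int_E f(x)\,\E^{-\I 2\pi y\cdot x}\bigl(\E^{\I 2\pi u_y\cdot x}-1\bigr)\,\D x.
\]
The key step is to expand $\E^{\I 2\pi u_y\cdot x}-1$ in its Taylor series and apply the multinomial theorem, $\E^{\I 2\pi u_y\cdot x}-1=\sum_{\alpha\neq 0}\frac{(\I 2\pi)^{|\alpha|}}{\alpha!}u_y^{\alpha}x^{\alpha}$ (the series converging absolutely and uniformly for $x\in E$); interchanging summation and integration, legitimate since $f\in\rL^1(E)$ and the convergence is uniform on the compact set $E$, yields
\[
(Tf-Ff)(y)=\sum_{\alpha\neq 0}\frac{(\I 2\pi)^{|\alpha|}}{\alpha!}\,u_y^{\alpha}\,\widehat{x^{\alpha}f}(y).
\]
The point of this rearrangement is that each power $x^\alpha$ has been absorbed into a Fourier transform evaluated at the \emph{same} point $y$, not at $\omega_y$, so that the $\alpha$‑th term is, as a function of $y$, just the pointwise product of the bounded factor $u_y^\alpha$ with $\widehat{x^\alpha f}$; this is exactly what prevents the circularity one meets if one instead tries to estimate $|\hat f(\omega_y)-\hat f(y)|$ directly in terms of samples of $x^\alpha f$.

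The remaining estimate is routine. Since $|u_y^\alpha|\leq\delta^{|\alpha|}$ for a.e.\ $y$, $|x^\alpha|\leq 1$ on $E=[-1,1]^d$, and Plancherel gives $\nm{\widehat{x^\alpha f}}_{\rL^2(\hat{\bbR}^d)}=\nm{x^\alpha f}\leq\nm f$, the $\alpha$‑th term has $\rL^2(\hat{\bbR}^d)$‑norm at most $\frac{(2\pi)^{|\alpha|}}{\alpha!}\delta^{|\alpha|}\nm f$. Summing over $\alpha\neq 0$, which also justifies the interchange above and shows $(T-F)f\in\rL^2$, gives
\[
\nm{(T-F)f}_{\rL^2}\leq\nm f\sum_{\alpha\neq 0}\frac{(2\pi\delta)^{|\alpha|}}{\alpha!}=\nm f\Bigl(\bigl(\E^{2\pi\delta}\bigr)^{d}-1\Bigr)=\nm f\,(\E^{2\pi\delta d}-1),
\]
using $\sum_{\alpha\in\mathbb{N}_0^d}(2\pi\delta)^{|\alpha|}/\alpha!=\prod_{j=1}^{d}\sum_{n\geq 0}(2\pi\delta)^{n}/n!$. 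This is the required operator bound, and it shows that the dimension dependence enters only through the coordinatewise splitting of $u_y\cdot x$ followed by the $d$‑fold product of scalar exponential series, which is what turns $2\pi\delta$ into $2\pi\delta d$. (An alternative route to the same exponential, with heavier bookkeeping, is to iterate the mean‑value estimate $|\hat f(y)-\hat f(\omega_y)|\leq\delta\sum_{j}\int_0^1|\partial_j\hat f(\omega_y+t u_y)|\,\D t$, using that $\partial_j\hat f=\widehat{-\I 2\pi x_j f}$ is again the Fourier transform of a function in $\rH$ of norm at most $2\pi\nm f$.) I expect the only genuinely delicate point to be the measure‑theoretic bookkeeping around the Voronoi tiling when $\Omega$ is not assumed separated: one must verify that a measurable nearest‑point selection $y\mapsto\omega_y$ exists almost everywhere and that $\{V_\omega\}_{\omega\in\Omega}$ covers $\hat{\bbR}^d$ up to a null set, so that the identity $\nm{Tf}^2=\sum_{\omega}\mu_\omega|\hat f(\omega)|^2$ and the a.e.\ formula $(Tf)(y)=\hat f(\omega_y)$ are valid; everything else reduces to the power‑series computation above.
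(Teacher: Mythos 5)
Your argument is correct and delivers exactly the stated constants, but it is not the route the paper takes. The paper never proves Theorem \ref{t:Grocg_orig} directly (it is quoted from Gr\"ochenig); the closest in-paper argument is the proof of Theorem \ref{t:Groch_improved}, which shares your skeleton — compare $\chi=Tf$ with $\hat f$ via the triangle inequality in $\rL^2$ and Plancherel — but obtains the key bound on $\|\hat f-\chi\|$ quite differently: a pointwise Taylor expansion of the entire function $\hat f$ about $\omega$, a Cauchy--Schwarz splitting with a free parameter $c$ that is optimized at the end, two applications of the multinomial identity, and Parseval applied to $D^\alpha\hat f=\widehat{(-\I 2\pi x)^\alpha f}$. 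That machinery is what buys the sharper rate $\exp(2\pi m_E c^*\delta_*)$, i.e.\ $\exp(2\pi\sqrt{d}\,\delta)$ for $E=[-1,1]^d$ with Euclidean density — a $\sqrt d$ improvement over the constant in the theorem you were asked to prove. Your estimate instead expands $\E^{\I 2\pi u_y\cdot x}-1$ inside the Fourier integral and sums the resulting terms $u_y^\alpha\,\widehat{x^\alpha f}(y)$ by the triangle inequality in $\rL^2$, using $|u_y^\alpha|\le\delta^{|\alpha|_1}$, $|x^\alpha|\le1$ on $[-1,1]^d$ and Plancherel; this is more elementary (no Cauchy--Schwarz splitting, no parameter optimization), but the coordinatewise $\ell^1$-type bound absorbs the full factor $d$, producing $\E^{2\pi\delta d}-1$ — which is precisely the constant Theorem \ref{t:Grocg_orig} claims, so your proof is perfectly matched to the statement while the paper's method, specialized to this setting, proves something strictly stronger.

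One caveat you correctly flag but do not resolve: for a countable, non-separated $\Omega$ the literal Voronoi cells need not cover $\hat{\bbR}^d$ up to a null set, since the nearest point in $\Omega$ may fail to be attained on a set of positive measure (e.g.\ when $\Omega$ has a continuum of accumulation points outside $\Omega$); on the uncovered set $Tf$ vanishes and your series bound for $(T-F)f$ gives no control. This is not a defect specific to your argument — the paper's own proof simply asserts that the $V^*_\omega$ partition $\hat{\bbR}^d$ — and it is repaired either by restricting to sets for which nearest points are attained a.e., or by replacing the Voronoi weights with any measurable partition subordinate to the $\delta$-cover (e.g.\ $V_{\omega_j}=\overline{B}(\omega_j,\delta)\setminus\bigcup_{i<j}\overline{B}(\omega_i,\delta)$ for an enumeration of $\Omega$), under which your estimate goes through verbatim. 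The remaining measurability and tie-breaking issues (bisectors of countably many pairs are null) are routine, as you indicate.
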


Note that the bound \R{Grochenig_delta} deteriorates linearly with the dimension $d$. Also, $E$ can be any rectangular domain of the form  $\prod_{i=1}^d [-s_i,s_i]$, since $\text{supp}(f)\subseteq \prod_{i=1}^d [-s_i,s_i]$ implies that $\tilde{f}(x)=f(x_1/s_1,\ldots,x_d/s_d)$ has support in $[-1,1]^d$. Hence, the result is stated for $E=[-1,1]^d$ without loss of generality \cite{GrochenigModernSamplingBook}. Moreover, note that $E$ may also be any compact set that is a subset of $[-1,1]^d$ such as any $\ell^p$ unit ball, $p>0$, for example.

\subsection{Weighted Fourier frames with explicit frame bounds and the  proof of Theorem \ref{t:Groch_improved}}
 \label{ss:explicit_bounds}

Much like Beurling's result, Theorem \ref{t:Beurling_thm}, it is expected that the density condition for weighted Fourier frames given in Theorem \ref{t:Grocg_orig} does not depend on dimension.  Unfortunately, Gr\"ochenig's estimates deteriorate linearly with the dimension $d$, and thus cease to be sharp. Therefore, in Theorem \ref{t:Groch_improved} we provide an modification of Gr\"ochenig's result by presenting explicit bounds with slower, and sometimes no deterioration with respect to dimension.

The estimates in Theorem \ref{t:Groch_improved} are presented in terms of the following quantity
\bes{%\label{Eradius}
m_E = \sup_{x\in E}|x|,
}
where  $E\subseteq \bbR^d$ and $\abs{\cdot}$ is Euclidean  norm. Note that $m_{\cB_1}=1$ and therefore it is independent of dimension for spheres. Moreover, if  $E$ is the $\ell^p$ unit ball, i.e. $E=\{x:\bbR^d:|x|_{p}\leq 1\}$, $p>0$, then 
\be{\label{me_norms}
m_E=\max \{ 1 , d^{1/2-1/p} \},
}
due to  inequality \R{norms_ineq}.

Let us recall here the multinomial formula. For any $k\in\bbN_0$ and $x\in\bbR^d$, we have
\be{\label{multinomial}
\sum_{|\alpha|_{1}=k}\frac{k!}{\alpha!} x^\alpha = (x_1+\cdots+x_d)^k,
}
where $\alpha=(\alpha_1,\ldots,\alpha_d)$, $|\alpha|_{1}=|\alpha_1|+\ldots+|\alpha_d|$, $\alpha!=\prod_{j=1}^d\alpha_j!$ and $x^\alpha=\prod_{j=1}^d x_j^{\alpha_j}$.
Regarding the multi-index notation, in what follows, we also use the derivative operator defined as
\bes{
D^{\alpha} = \frac{\partial^{|\alpha|_{1}}}{\partial_{x_1}^{\alpha_1}\cdots\partial_{x_d}^{\alpha_d}}.
}

Now we are ready to prove our main result for weighted Fourier frames with explicit bounds, namely Theorem \ref{t:Groch_improved}.

\prf{[Proof of Theorem $\ref{t:Groch_improved}$]
The proof is set up in the same manner as the proof of Gr\"ochenig's original result, Theorem \ref{t:Grocg_orig}. 
For a function $f\in\rH$, define 
\bes{
\chi(\hat{y})=\sum_{\omega\in\Omega}\hat{f}(\omega){\bf1}_{V^*_\omega}(\hat{y}),\quad \hat{y} \in \mathbb{\hat{R}}^d.
}
Since the sets $V^*_\omega$, $\omega\in\Omega$, make a disjoint partition of $\mathbb{\hat{R}}^d$, it holds that
\bes{
\|\chi\|=\sqrt{\sum_{\omega\in\Omega}\mu_\omega|\hat{f}(\omega)|^2},
}
where $\mu_\omega=\text{meas} (V^*_\omega)$.
Note that
\be{\label{step_ineq_00}
\|f\| - \|\hat{f}-\chi\| \leq \|\chi\| \leq \|\hat{f}-\chi\| + \|f\|.
}
Hence, we aim  to estimate $\|\hat{f}-\chi\|$. Again, by using properties of Voronoi regions, it is possible to conclude that
\bes{
\|\hat{f}-\chi\|=\sqrt{\sum_{\omega\in\Omega}\int_{V^*_\omega}|\hat{f}(\hat{y})-\hat{f}(\omega)|^2\D \hat{y}}.
} 
In order to estimate $|\hat{f}(\hat{y})-\hat{f}(\omega)|^2$, for all $\omega\in\Omega$ and all $\hat y \in V^*_\omega$, Taylor's expansion of the entire function $\hat{f}$ is used. Therefore, by the Cauchy--Schwarz inequality we get
\ea{\label{step_ineq_1}
| \hat{f}(\hat{y}) - \hat{f}(\omega) |^2 &\leq \left ( \sum_{\alpha \neq 0} \frac{|(\hat{y}-\omega)^{\alpha} |}{\alpha!} |D^{\alpha} \hat{f}(\hat{y})| \right )^2 \nonumber
\\
& \leq \sum_{\alpha \neq0} \frac{c^{|\alpha|_{1}} (\hat{y}-\omega)^{2\alpha}  }{\alpha!} \sum_{\alpha \neq0} \frac{c^{-{|\alpha|_{1}}}}{\alpha!} | D^{\alpha} \hat{f}(\hat{y}) |^2,
}
for some constant $c>0$  to be determined later. The inequality \R{step_ineq_1} is where this proof starts to differ from Gr\"ochenig's original proof.
For the first term in \R{step_ineq_1}, by the multinomial formula \R{multinomial} we get
\eas{
\sum_{\alpha \neq0} \frac{c^{|\alpha|_{1}} (\hat{y}-\omega)^{2\alpha}  }{\alpha!} &= \sum^{\infty}_{k=0} \frac{c^k}{k!} \sum_{| \alpha |_{1} = k} \frac{k!}{\alpha!} (\hat{y}-\omega)^{2\alpha}  -1 \\
&= \sum^{\infty}_{k=0} \frac{c^{k}}{k!} |\hat{y}-\omega |^{2k} - 1 \\
&\leq \exp ( c (\delta_*c^*)^2) - 1,
}
where in the final inequality $\delta_*$-density of the set $\Omega$ is used:
\bes{
\forall \omega\in\Omega, \quad \forall \hat y\in V^*_\omega , \quad |\hat y -\omega|\leq \delta_* c^*.
}
Now consider the other term in \R{step_ineq_1}.  If we integrate over the Voronoi region $V^*_\omega$ and sum over $\omega\in\Omega$ then
\eas{
\sum_{\alpha \neq0} \frac{c^{-|\alpha|_{1}}}{\alpha!} \sum_{\omega\in\Omega}\int_{V^*_\omega} | D^{\alpha} \hat{f}(\hat{y}) |^2 \D \hat{y} &= \sum^{\infty}_{k=1} \frac{c^{-k}}{k!} \sum_{|\alpha|_{1} = k} \frac{k!}{\alpha!}\|D^{\alpha} \hat{f} \|^2
\\
&=\sum^{\infty}_{k=1} \frac{c^{-k}}{k!} \int_{E} \sum_{|\alpha|_{1} = k} \frac{k!}{\alpha!} (2\pi x)^{2\alpha}  |f(x) |^2 \D x, 
}
since by Parseval's identity
\bes{
\|D^{\alpha} \hat{f} \|^2 = \| \hat{F} \|^2 = \| F \|^2 = \int_{E} (2\pi x)^{2\alpha}|f(x)|^2\D x,
}
where $F(x)=(-\I2\pi x)^{\alpha}f(x)$.
Hence, again by the multinomial formula \R{multinomial}, we obtain
\eas{
\sum_{\alpha \neq0} \frac{c^{-|\alpha|_{1}}}{\alpha!} \sum_{\omega\in\Omega}\int_{V^*_\omega} | D^{\alpha} \hat{f}(\hat{y}) |^2 \D \hat{y} 
&= \sum^{\infty}_{k=1} \frac{c^{-k}(2 \pi m_E)^{2k}}{k!} \| f \|^2 \\ 
&= \left(\exp ( (2 \pi m_E)^2 / c ) -1 \right) \|f\|^2.
}
Therefore, from \R{step_ineq_1}, we get
\bes{
\| \hat{f} - \chi \|^2 \leq \left ( \exp(c (\delta_* c^*)^2) - 1 \right )\left(\exp ( (2 \pi m_E)^2 / c ) -1 \right) \| f \|^2.
}
If we equate the two terms, then we set $c = 2 \pi m_{E} / (\delta_* c^*)$ to get
\bes{
\| \hat{f} - \chi \| \leq  \left ( \exp(2 \pi m_E \delta_* c^*) - 1 \right ) \| f \|.
}
Thus \R{step_ineq_00} now gives
\bes{
\sqrt{B} \leq  \exp(2 \pi m_E \delta_* c^*) ,\quad \sqrt{A} \geq 2 - \exp(2 \pi m_E \delta_* c^*),
}
with the condition that
\bes{
\delta_* < \frac{\log2}{2 \pi m_{E} c^*},
}
as required.
}

To illustrate this result, let $E=\{x\in\bbR^d:|x|_{p}\leq1\}$, $p>0$, and let $\abs{\cdot}_*$ be the $\ell^q$ norm, $q\geq1$. Then, the density condition \R{delta_improved} becomes
\be{\label{delta_bound_norms}
\delta_{q}  < \frac{\log2}{2\pi \max\{1,d^{1/2-1/p}\}\max\{1,d^{1/2-1/q}\} },
}
due to \R{norms_ineq} and \R{me_norms}. This bound attains its minimum for $p=q=\infty$, when it deteriorates linearly with the dimension $d$. However, in all other cases the deterioration of the bound on density, and also, the deterioration of weighted frame bounds estimations, is slower with the dimension. Moreover, they are independent of dimension whenever $p\leq2$ and $q\leq2$. 

To compare this theorem with Gr\"ochenig's result given in Theorem \ref{t:Groch_improved}, we set $p=\infty$ and $q=2$ in \R{delta_bound_norms}.  The bound \R{delta_bound_norms} gives $\delta_2 < \frac{\log2}{2 \pi \sqrt{d}}$, whereas \R{Grochenig_delta} gives $\delta_2 < \frac{\log2}{2 \pi d}$.  Hence Theorem \ref{t:Groch_improved} leads to an improvement by a factor of $\sqrt{d}$ and no deterioration in the constant $\frac{\log2}{2 \pi}$.

\subsection{Sharp sufficient condition for weighted Fourier frames and the proof of Theorem \ref{t:weighted_frame}} \label{ss:sharp_result}

The relative separation of a sampling set  $\Omega$ is  necessary and sufficient for the existence of an upper frame bound \cite[Thm.\ 2.17]{Young}, see also \cite{Jaffard}.
However, if we introduce appropriate weights $\{\mu_\omega\}_{\omega\in\Omega}$ to compensate for the clustering of the sampling points $\Omega$, and consider $\{\sqrt{\mu_\omega} e_\omega\}_{\omega\in\Omega}$ instead of $\{ e_\omega \}_{\omega\in\Omega}$, then this condition ceases to be necessary, as it is evident from Gr\"ochenig's Theorem \ref{t:Grocg_orig} and the improved result given in Theorem \ref{t:Groch_improved}. On the other hand, the density condition from Theorem \ref{t:Groch_improved} that guarantees a lower weighted frame bound is still far from being sharp, while the sharp density condition from  Beurling's result, Theorem \ref{t:Beurling_thm}, does not guarantee a lower frame bound once nontrivial weights $\mu_\omega>0$ are introduced. To mitigate this, we next establish  Theorem  \ref{t:weighted_frame}.

Without imposing  restrictions such as separation, Theorem  \ref{t:weighted_frame} gives sufficient condition on a density of set of points to yield  a weighted Fourier frame, which is dimension independent. Therefore, in all dimensions, once this density condition is fulfilled, the sampling points are allowed to cluster arbitrarily, as long as the appropriate weights are used. Moreover, this result is sharp, which follows from the sharpness of Beurling's result, Theorem \ref{t:Beurling_thm}.

In order to prove Theorem \ref{t:weighted_frame}, we need the following lemma.

\begin{lemma}\label{l:seperated_sub}
If  $\Omega$ is a sequence with density  $\delta_{E^\circ}(\Omega)<1/4$ in $\hat{\mathbb{R}}^d$, then there exists a subsequence $\tilde \Omega \subseteq \Omega$ which is $\eta$-separated with respect to the norm $\abs{\cdot}_{E^\circ}$ for some $\eta>0$, and also has density  $\delta_{E^\circ}(\tilde \Omega)<1/4$ in $\hat{\mathbb{R}}^d$.
\end{lemma}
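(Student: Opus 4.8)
The plan is to thin $\Omega$ out to a separated subset while controlling the resulting loss of density. Write $\delta := \delta_{E^\circ}(\Omega) < 1/4$ and fix $\eta \in (0, 1/4 - \delta)$, so that $\delta + \eta < 1/4$. Using that $\Omega$ is countable, enumerate it as $\Omega = \{\omega_1, \omega_2, \dots\}$ and define $\tilde\Omega$ greedily: put $\omega_1 \in \tilde\Omega$, and for $n \geq 2$ admit $\omega_n$ to $\tilde\Omega$ precisely when $|\omega_n - \lambda|_{E^\circ} \geq \eta$ for every $\lambda$ already placed in $\tilde\Omega$. Of two distinct points of $\tilde\Omega$, the one admitted later was tested against the earlier one at its admission step, so $\tilde\Omega$ is $\eta$-separated with respect to $|\cdot|_{E^\circ}$ (and hence, as noted above, with respect to every norm on $\hat{\mathbb{R}}^d$).

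It then remains to bound the density of $\tilde\Omega$. The greedy rule is maximal in the following sense: if $\omega \in \Omega \setminus \tilde\Omega$, then at the step where $\omega$ was rejected there was already a point $\lambda \in \tilde\Omega$ with $|\omega - \lambda|_{E^\circ} < \eta$. Now fix $\hat y \in \hat{\mathbb{R}}^d$ and $\varepsilon > 0$. Since $\inf_{\omega \in \Omega} |\hat y - \omega|_{E^\circ} \leq \delta$, choose $\omega \in \Omega$ with $|\hat y - \omega|_{E^\circ} \leq \delta + \varepsilon$, and then choose $\lambda \in \tilde\Omega$ with $|\omega - \lambda|_{E^\circ} < \eta$ (taking $\lambda = \omega$ if $\omega \in \tilde\Omega$). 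Because $E^\circ$ is compact, convex and symmetric, $|\cdot|_{E^\circ}$ is a norm, and the triangle inequality gives $|\hat y - \lambda|_{E^\circ} < \delta + \varepsilon + \eta$. Letting $\varepsilon \downarrow 0$ and taking the supremum over $\hat y \in \hat{\mathbb{R}}^d$ yields $\delta_{E^\circ}(\tilde\Omega) \leq \delta + \eta < 1/4$, which is the required density bound. (In particular $\tilde\Omega$ is infinite, since a set of finite $|\cdot|_{E^\circ}$-density covers $\hat{\mathbb{R}}^d$ by translates of a fixed ball and so cannot be finite.)

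I do not anticipate a genuine obstacle here; the argument is elementary once it is organized around a maximal separated subset. The two points that need care are that the infimum appearing in the density need not be attained — which is why the $\varepsilon$ above is needed — and that one must check both that the greedy construction is $\eta$-separated and that its maximality is precisely what allows the triangle inequality to close the density estimate. If one preferred, the greedy construction could be replaced by a Zorn's lemma argument producing a maximal $\eta$-separated subset of $\Omega$, with the same consequences.
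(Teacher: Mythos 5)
Your proof is correct, but it takes a genuinely different route from the paper's. You extract a maximal $\eta$-separated subset greedily from an enumeration of $\Omega$: separation is immediate from the admission rule, and the density bound follows from maximality (every rejected point lies within $\eta$ of a kept point) combined with the triangle inequality and a limiting argument in $\varepsilon$, giving $\delta_{E^\circ}(\tilde\Omega)\leq\delta+\eta<1/4$. The paper instead builds $\tilde\Omega$ by an advancing-front construction: having chosen $\tilde\omega_0,\dots,\tilde\omega_N$, it picks a point $x$ on the boundary of the union of the balls $E^\circ(\tilde\omega_n,\delta_1)$ with $\delta_1=\delta+\eta$ and selects $\tilde\omega_{N+1}\in\Omega\cap E^\circ(x,\delta)$, which simultaneously forces separation at least $\delta_1-\delta=\eta$ and density at most $\delta+\eta/2$. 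The paper's construction yields a marginally sharper density constant ($\delta+\eta/2$ versus your $\delta+\eta$), but this is immaterial since $\eta$ is a free parameter; in exchange, your argument is more elementary and sidesteps the delicate point in the paper's proof of justifying that the procedure ``continues until $G=\hat{\bbR}^d$'', i.e.\ that the growing union exhausts the whole space, which in your setup is replaced by the cleaner maximality property of the greedy (or Zorn) selection. Your added observations---that the infimum in the density need not be attained, and that $\tilde\Omega$ is necessarily infinite---are correct and handled appropriately.
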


\begin{proof}To begin with, for the set $E$, we define $E(x,r)= x + r E$. For $\delta_{E^\circ}$, we simply write $\delta$.

Let us choose $\eta>0$ such that $\delta+\eta/2<1/4$ and set $\delta_1=\delta+\eta$. Now define $\tilde{\Omega}$ inductively as follows. For arbitrary picked point $\omega_0\in\Omega$, set $\tilde\omega_0=\omega_0$.
Given $\tilde{\omega}_0,\ldots,\tilde{\omega}_N$, define $\tilde{\omega}_{N+1}$ by
\bes{
\tilde \omega_{N+1} \in\Omega\cap E^\circ(x,\delta), 
}
where
\bes{
x\in\partial G=\partial\left(\bigcup_{\tilde\omega_n\in\tilde\Omega_N}E^\circ\left(\tilde\omega_n,\delta_1\right)\right)\quad \text{and} \quad \tilde\Omega_N=\{\tilde\omega_n\}_{n=0}^N.
}
Here, we picked any $x \in \partial G$ and then, for that $x$, any $\tilde{\omega}_{N+1} \in \Omega \cap E^\circ(x,\delta)$.
Finally, we let $\tilde{\Omega} = \{\tilde \omega_n \}^{\infty}_{n=0}$.

Note that for any $x \in \hat{\bbR}^d$ there must exists a point $\omega\in\Omega$ in the set $E^\circ(x,\delta)$ such that $x$ is covered by $E^\circ(\omega,\delta)$, since $\Omega$ is $\delta$-dense in the norm $\abs{\cdot}_{E^\circ}$ and $\hat{\bbR}^d$ can be covered by the sets $E^\circ(\omega,\delta)$, $\omega\in\Omega$. Moreover, for every $x\in \partial G$ a point $\omega \in\Omega\cap E^\circ(x,\delta)$ must be different than any other point $\omega\in\tilde{\Omega}_N$, since $\delta<\delta_1$.  Also, note that for every such $\omega\in  \Omega\cap E^\circ(x,\delta)$ it holds that
\bes{
\eta = \delta_1 - \delta \leq \inf_{\tilde\omega_n\in\tilde\Omega_N}|\omega-\tilde\omega_n|_{E^\circ} \leq \delta_1+\delta=2\delta+\eta.
}
Therefore if we choose $\tilde \omega_{N+1}$ from $\Omega\cap E^\circ(x,\delta)$ arbitrarily, and continue the procedure until $G=\hat{\bbR}^d$,
by the construction,  $\tilde\Omega$  is $\tilde\delta$-dense in the norm $\abs{\cdot}_{E^\circ}$ where $\tilde\delta=(2\delta+\eta)/2<1/4$. Moreover, it is $\eta$-separated with respect to the norm $\abs{\cdot}_{E^\circ}$.
\end{proof}

\rem{In view of this lemma, it might be tempting to infer the following
\be{\label{alternative}
\sum_{\omega\in\Omega} \mu_\omega|\hat{f}(\omega)|^2 \geq \sum_{\tilde\omega\in\tilde\Omega} \mu_{\tilde\omega}|\hat{f}(\tilde\omega)|^2 \geq \textnormal{meas}\left(\frac{\eta}{2}E^{\circ}\right) \sum_{\tilde\omega\in\tilde\Omega} |\hat{f}(\tilde\omega)|^2 \geq \textnormal{meas}\left(\frac{\eta}{2}E^{\circ}\right) A,
}
and therefore seemingly obtain the lower frame bound for the weighted non-separated sequence $\Omega$. However, note that the second inequality in \R{alternative} need not hold, since the weights at the very beginning are chosen as Lebesgue measure of the Voronoi regions corresponding to $\Omega$, which can be arbitrarily small due to clustering. Therefore, although the sequence $\tilde \Omega$ is separated, there might indeed exists $\tilde \omega \in \tilde \Omega $ such that its Voronoi region $V^{E^{\circ}}_{\tilde\omega}$ does not contain a ball of radius $\eta/2$ with respect to the $E^{\circ}$-norm. 
}

\prf{[Proof of Theorem $\ref{t:weighted_frame}$]
First of all, for the upper bound we use Theorem \ref{t:Groch_improved}. From the proof of Theorem \ref{t:Groch_improved}, we can infer that the density condition \R{delta_improved} is imposed only to ensure $A>0$, and that the estimate of the upper frame bound holds even if this density condition is not satisfied.   Indeed, for any compact set $E\subseteq\bbR^d$, any norm $\abs{\cdot}_*$ and any positive density $\delta_*<\infty$, the upper frame bound satisfies
\bes{
B \leq \exp{(4\pi m_E \delta_* c^*)} <  \infty.
}
In particular, if $\delta_{E^{\circ}}<1/4$, then
\bes{
B \leq \exp{(\pi m_E c^{\circ})} <  \infty,
}
where $c^{\circ}\in(0,\infty)$ is the smallest constant such that $\abs{\cdot}\leq c^{\circ} \abs{\cdot}_{E^{\circ}}$.

For the lower bound, we note that if $\Omega$ is separated, then everything follows easily. Namely, since $\Omega$ is $\eta$-separated with respect to the $E^{\circ}$-norm, we get 
\bes{
\sum_{\omega\in\Omega} \mu_\omega|\hat{f}(\omega)|^2 \geq \text{meas}\left(\frac{\eta}{2}E^{\circ}\right) \sum_{\omega\in\Omega} |\hat{f}(\omega)|^2 \geq \text{meas}\left(\frac{\eta}{2}E^{\circ}\right) A' \|f\|^2,
}
where $A'>0$ comes from application of Theorem \ref{t:Beurling_thm}. Thus we take $A=\text{meas}\left(\frac{\eta}{2}E^{\circ}\right) A'$.

However, if $\Omega$ is not separated, we proceed as follows. By Lemma \ref{l:seperated_sub}, we know that there exists a subsequence $\tilde \Omega  \subseteq \Omega$ with density $\delta_{E^\circ}(\tilde\Omega)=\delta_{E^\circ}(\Omega)+\eta/2<1/4$  and separation $\eta=\eta_{E^{\circ}}(\tilde\Omega)>0$.  Let $\epsilon < \eta/2$.  Then
\bes{
\sum_{\omega \in \Omega} \mu_\omega | \hat{f}(\omega) |^2 \geq \sum_{\tilde \omega \in \tilde \Omega} \sum_{\omega \in E^{\circ}_{\epsilon}(\tilde{\omega})\cap\Omega} \mu_\omega | \hat{f}(\omega) |^2,
}
where $E^{\circ}_{\epsilon}(\tilde{\omega})$ denotes the ball with respect to the $E^{\circ}$-norm of radius $\epsilon$ centered at $\tilde{\omega}$.
Since $\hat{f}$ is continuous function, from the Extreme value theorem, for each $\tilde \omega$, we know there is a point $z_{\tilde{\omega}} \in \overline{E^{\circ}_{\epsilon}(\tilde{\omega})} = E^{\circ}_{\epsilon}(\tilde{\omega})$, such that
\bes{
\forall \omega \in E^{\circ}_{\epsilon}(\tilde{\omega}), \quad | \hat{f}(\omega) | \geq | \hat{f}(z_{\tilde{\omega}}) |.
}
Since also $\mu_\omega=\text{meas}\left(V^{E^\circ}_\omega\right)$ and the sets $V^{E^\circ}_\omega$ are disjoint, we get
\bes{
\sum_{\omega \in \Omega} \mu_\omega | \hat{f}(\omega) |^2 
\geq \sum_{\tilde \omega \in \tilde \Omega} \left( | \hat{f}(z_{\tilde \omega}) |^2 \sum_{\omega \in E^{\circ}_{\epsilon}(\tilde{\omega})\cap\Omega} \mu_\omega\right) 
=  \sum_{\tilde \omega \in \tilde \Omega} \left(| \hat{f}(z_{\tilde{\omega}}) |^2 \text{meas}\left ( \bigcup_{\omega \in E^{\circ}_{\epsilon}(\tilde{\omega})\cap\Omega} V^{E^\circ}_\omega \right ) \right ).
}
Now we claim the following:
\bes{
\bigcup_{\omega \in E^{\circ}_{\epsilon}(\tilde{\omega})\cap\Omega} V^{E^\circ}_\omega  \supseteq E^{\circ}_{\rho}(\tilde{\omega}), \quad \rho=\frac{\epsilon}{2}.
}
To see this, let $|\hat y - \tilde{\omega} |_{E^{\circ}} \leq \frac{\epsilon}{2}$.  Since $\hat y \in V^{E^\circ}_\omega$ for some $\omega \in \Omega$, we have $|\hat y - \omega |_{E^\circ} \leq | \hat y - \tilde{\omega} |_{E^\circ}$. Therefore
\bes{
|\hat y - \omega |_{E^{\circ}} \leq| \hat y - \tilde{\omega} |_{E^{\circ}} \leq  \frac{\epsilon}{2},
}
and hence
\bes{
| \omega - \tilde{\omega} |_{E^{\circ}} \leq | \hat y - \omega |_{E^{\circ}} + | \hat y - \tilde{\omega} |_{E^{\circ}} \leq \epsilon.
}
Thus $\omega \in E^{\circ}_{\epsilon}(\tilde{\omega})\cap\Omega$ as required.  Therefore, we  get
\bes{
\sum_{\omega \in \Omega} \mu_\omega | \hat{f}(\omega) |^2 \geq \text{meas}\left(\frac{\epsilon}{2}E^{\circ}\right)\sum_{\bar \omega \in \bar \Omega} | \hat{f}(\bar \omega) |^2,
}
where $\bar \Omega= \{z_{\tilde\omega}\ :\ \tilde\omega \in \tilde\Omega\}$. To complete the proof, we only need to show that the set $\bar \Omega$ is separated and sufficiently dense, so that we can apply the Theorem \ref{t:Beurling_thm}.  Consider $\bar{\omega}_1$ and $\bar{\omega}_2$.  Then we clearly have
\bes{
| \bar{\omega}_1 - \bar{\omega}_2 |_{E^{\circ}} \geq \eta - 2 \epsilon  >0,
}
since $\tilde \Omega$ is separated with the separation $\eta$ and the $\bar{\omega}$'s lie in the $\epsilon$-cover of this set.  Moreover, it is straightforward to see that
\bes{
\delta_{E^\circ}(\bar \Omega) \leq \delta_{E^\circ} (\tilde \Omega) + \epsilon.
}
Thus, since $ \delta_{E^\circ} (\tilde \Omega)<1/4$,  we  have the same for $\bar \Omega$ for sufficiently small $\epsilon>0$. We set $A= \text{meas}\left(\frac{\epsilon}{2}E^{\circ}\right)A'$, where $A'>0$ is as in Theorem \ref{t:Beurling_thm} corresponding to sequence $\bar\Omega$, and finish the proof.
}

\begin{remark}\label{r:lower_bound}
\textnormal{From the proof of Theorem \ref{t:weighted_frame} and the proof of Lemma \ref{l:seperated_sub}, we can conclude the following. If $\Omega\subseteq\hat\bbR^d$ has density $\delta_{E^\circ}(\Omega)<1/4$, it yields a weighted Fourier frame with the lower weighted Fourier frame bound of the  form
\bes{
A= \text{meas}\left(\frac{\epsilon}{2}E^{\circ}\right)A', 
}
where $A'>0$ is the lower Fourier frame bound for  sequence $\bar{\Omega}\subseteq\hat\bbR^d$ with separation $\eta_{E^{\circ}}(\bar\Omega)=\eta-2\epsilon$  and density $\delta_{E^{\circ}}(\bar\Omega)\leq \delta_{E^{\circ}}(\Omega)+\eta/2+\epsilon$, where constants $\eta,\epsilon>0$ are such that $\epsilon < \eta/2$ and $\delta_{E^{\circ}}(\Omega)+\eta/2+\epsilon<1/4$. However, this does not in general lead to an explicit estimate of $A$ since we typically do not know an explicit estimate of $A'$. On the other hand, the upper weighted Fourier frame bound $B$ is explicitly estimated by 
$$B \leq \exp{(\pi m_E c^{\circ})},$$ 
where $c^{\circ}\in(0,\infty)$ is the smallest constant such that $\abs{\cdot}\leq c^{\circ} \abs{\cdot}_{E^{\circ}}$.} 
\end{remark}

\begin{remark}
Note that the density condition form Theorem \ref{t:Groch_improved} does not contradict the sharpness of the density condition from Theorem \ref{t:weighted_frame}, i.e.,~note that 
$$\frac{\log2}{2\pi m_E c^{\circ}}\leq\frac14,$$
where $c^{\circ}$ is the smallest constant such that $\abs{\cdot}\leq c^{\circ} \abs{\cdot}_{E^{\circ}}$ and $E$ is a compact, convex and symmetric set. To see this, we now argue that
$m_{E}c^{\circ}\geq1$. Note that from the definition of a polar set, it follows that for all $y\in\bbR^d$ we have
\bes{
\abs{y}_{E^{\circ}}=\max_{x\in E} \abs{x \cdot y}, 
}
see for example \cite{BenedettoSpiral}. Therefore $\abs{\cdot}_{E^{\circ}} \leq  m_{E} \abs{\cdot}$, which implies $1/m_E \leq c_{\circ}$, where  $c_{\circ}$ is the largest constant such that $c_{\circ} \abs{\cdot}_{E^{\circ}} \leq \abs{\cdot}$. Hence
\bes{
m_{E}c^{\circ}\geq \frac{c^{\circ}}{c_{\circ}},
}
and since $c_{\circ}\leq c^{\circ}$, the claim follows.
\end{remark}

To end this section, in order to illustrate differences between classical and weighted Fourier frames, as well as different uses of previously given results, let us consider the following two-dimensional example. 

\begin{example}\label{e:example_sphere}
\textnormal{Let $E=\cB_1\subseteq\bbR^2$  and let
\bes{
\Lambda_1 = \tfrac{1}{8} \bbZ^2, \qquad \Lambda_2=\left\{ \left(\frac1n,\frac1m\right)  :  (n,m)\in \bbZ^2,  \min\left\{|n|,|m|\right\}>8 \right\}.
}
Note that, for such $E$, $E^\circ=\cB_1$ and the $E^\circ$-norm is simply the Euclidean norm $\abs{\cdot}$.}

\textnormal{The set of points $\Lambda_1$ is separated with the density 
\bes{
\delta_{\cB_1}(\Lambda_1)=\frac{\sqrt{2}}{16}\approx 0.0884 < \frac{1}{4}.
}
Therefore, by Theorem $\ref{t:Beurling_thm}$, we conclude the family of functions $\{e_\lambda\}_{\lambda\in\Lambda_1}$ is a  frame for $\rL^2(\cB_1)$.   However, if we now consider the set 
\bes{
\Omega=\Lambda_1\cup\Lambda_2,
}
for which $\delta_{\cB_1}(\Omega) = \delta_{\cB_1}(\Lambda_1)= {\sqrt{2}}/{16}$, Theorem $\ref{t:Beurling_thm}$ can not be used since $\Omega$ has infinitely many accumulation points at 
\bes{
\{0\}\cup\left\{\left(\frac1n,0\right) : n\in\bbZ, |n|>8\right\}\cup\left\{\left(0,\frac1m\right): m\in\bbZ, |m|>8\right\},
}
and therefore it is not separated. Moreover, it can be verified that the family $\{e_\omega\}_{\omega\in\Omega}$ fails in satisfying the right inequality of $\R{frame}$. To see this, we first note that
\bes{
\int_{\cB_1} \E^{-2\pi\I \omega \cdot x} \D x = \frac{J_1(2\pi|\omega|)}{|\omega|}, 
}
where $J_1$ is the Bessel function of the first kind and order 1. Therefore, there exists $c>0$ such that
\be{\label{int_exp_rel}
c \leq \left| \int_{\cB_1} \E^{-2\pi\I\left(\frac1n x_1+\frac1m x_2\right)} \D x_1 \D x_2 \right|^2 \leq \pi^2,
}
for all $(n,m)\in \bbZ^2$ such that $\sqrt{{1}/{n^2}+{1}/{m^2}}< a {j'_{1,1}}/{(2\pi)}\approx0.6098$, where $a$ is some fixed constant from the interval $(0,1)$ 
and $j'_{1,1}$ is the first positive zero of the function $J_1$.  Hence, it is enough to take the function $g(x)={\bf1}_{\cB_1}(x)$ for which $\|g\|^2=\pi$, whereas $\sum_{\omega\in \Omega} | \hat{g}(\omega) |^2$ is unbounded. Thus, we conclude that the set $\Omega$ does not give a Fourier frame.}

\textnormal{On the other hand, if, for the same set of points $\Omega=\Lambda_1\cup\Lambda_2$, we consider the weighted family  $\{\sqrt{\mu_\omega}e_\omega\}_{\omega\in\Omega}$ with the weights defined as Voronoi regions in $\ell^2$-norm,  this particular function $g$ satisfies the relation \R{w_frame} with some $0<A,  B<\infty$. This can be easily proved by using the inequalities $\R{int_exp_rel}$, and the fact that 
\bes{
\sum_{n=9}^{\infty}\sum_{m=9}^{\infty}\left(\frac{1}{n-1}-\frac{1}{n+1}\right)\left(\frac{1}{m-1}-\frac{1}{m+1}\right) = \left(\frac{17}{72}\right)^2.
} 
which implies that the sum of Voronoi regions corresponding to the points $\Lambda_2$ converges.
Moreover, since $\delta_{\cB_1}(\Omega)=\sqrt{2}/16$, by Theorem  $\ref{t:weighted_frame}$ we conclude that $\Omega$ gives rise to a weighted Fourier frame.}

\textnormal{Note also, in order to verify that $\Omega$ forms a weighted Fourier frame,  Gr\"ochenig's original result  could not be used since 
\bes{
\delta_{\cB_1}(\Omega) = \frac{\sqrt{2}}{16}> \frac{\log2}{4\pi} \approx 0.0552. 
}
However, since in this case $m_E=1$ and $c^*=1$ and since
\bes{
\delta_{\cB_1}(\Omega) = \frac{\sqrt{2}}{16} <  \frac{\log2}{2\pi} \approx 0.1103,
} 
we are able to use Theorem $\ref{t:Groch_improved}$ to conclude that $\Omega$ generates a weighted Fourier frame with the weighted Fourier frame bounds
$\sqrt{A}\geq 0.2574$ and $\sqrt{B}\leq1.7426$. }
\end{example}

\section{Multidimensional function recovery} \label{s:NUGS}

Having provided guarantees for obtaining a weighted Fourier frame from a countable set of points, we now consider the question of function recovery from finite nonuniform Fourier data.  To do so, we shall use the generalized sampling approach for nonuniform samples (NUGS) from \cite{1DNUGS}. 
As in \cite{1DNUGS},  let $\Omega_N = \{\omega_n\}_{n=1}^N \subseteq \mathbb{\hat{R}}^d$ be a finite set of distinct frequencies, i.e.~the sampling scheme, let $\rT\subseteq\rH$ be a finite-dimensional subspace, the so-called \textit{reconstruction space}, and let $\{\hat{f}(\omega)\}_{\omega\in\Omega_N}$ be the given data of an unknown function $f\in\rH$. Under appropriate conditions, NUGS provides an approximation $\tilde f\in\rT$ to $f$ via the  mapping $F:f\mapsto\tilde{f}$, which depends only on the given data and which satisfies
\be{\label{NUGSstability}
\forall f, h \in \rH, \quad \| f - F(f+h) \| \leq C(\Omega_N,\rT) ( \| f - \cP_{\rT} f \| +\|h\| ),
}
for some constant $C(\Omega_N,\rT)>0$, where $\cP_{\rT}$ denotes the orthogonal projection onto $\rT$. Thereby, NUGS provides reconstruction $F(f)$, which is both \textit{quasi-optimal}, i.e.~close to the best approximation in the given reconstruction space  $\cP_{\rT}$, and \textit{stable}, i.e.~resistant to noisy measurements. In particular, the NUGS reconstruction is defined as 
\be{
\label{GS_LS_data}
\tilde{f} =  \underset{g \in \rT}{\operatorname{argmin}} \sum^{N}_{n=1} \mu_{\omega_n} \left | \hat{f}(\omega_n) - \hat{g}(\omega_n) \right |^2,
}
where $\mu_{\omega_n}>0$ are suitably chosen weights corresponding to the sampling points.

In what follows, by conveniently  using the results on weighted frames from the previous section, we prove that the NUGS reconstruction defined by \R{GS_LS_data} is stable and quasi-optimal---it satisfies \R{NUGSstability}---provided that the sampling scheme is sufficiently dense and wide in the frequency domain. By this, we shall extend guarantees of the NUGS framework from \cite{1DNUGS} to the multidimensional setting.

% \begin{remark}
% Although a finite set $\Omega_N$ always gives rise to a Fourier frame  for its span, without weights the corresponding frame constants can deteriorate as $N \rightarrow \infty$.  As an illustration, consider the set of points $\Omega=\Lambda_1\cup\Lambda_2$ from the Example \ref{e:example_sphere}. If we take a sampling scheme $\Omega_N\subseteq\Omega$ defined as 
% \bes{
% \Omega_N=\left\{ \omega\in\Omega : |\omega|_{\infty}\leq N  \right\},
% }
% the upper frame bound $B$ blows up as $N\rightarrow\infty$.   However, as we see below, this issue is prevented by an appropriate choice of weights $\{\mu_\omega\}_{\omega\in\Omega_N}$. 
% \end{remark}

\begin{remark} 
\textnormal{Our purpose in this section is to provide analysis of recovery of a multivariate function $f$ from finitely many samples in an arbitrarily chosen subspace $\rT$ of finite dimension.  Consequently, we shall not address the specific algorithmic details, besides from noting that $\tilde{f}$ defined by \R{GS_LS_data} can be computed by solving an algebraic least squares problem. The computation of the NUGS reconstruction is summarized in \cite[Section 3.1]{1DNUGS}. For a general $\rT$, such that $\text{dim}(\rT)=M$, $\tilde{f}$ can be computed in $\ord{NM}$ operations. However,  if $\rT$ consists of $M$ wavelets, the computational complexity of NUGS can be reduced to only $\ord{N \log M}$ operations by using nonuniform fast Fourier transforms (NUFFTs) \cite{FesslerNUFFT,PottsNUFFT} and an iterative scheme for finding the least-squares solution such as the conjugate gradient method. This numerical implementation of NUGS is described at length in \cite{GataricPoon2015}.}
\end{remark}

Since we deal with finite sampling sets, which cannot be dense in the whole of $\mathbb{\hat{R}}^d$, in what follows we consider subsets of $\mathbb{\hat{R}}^d$.  Therefore, for a given sampling bandwidth $K>0$, we use the concept of \textit{$(K,\delta_*)$-density}:

\begin{definition}[$(K,\delta_*)$-density  with respect to $Y$]\label{K-dense}
Let $\Omega\subseteq \hat{\mathbb{R}}^d$ be a sampling scheme, $K>0$ and let  $\abs{\cdot}_*$ be an arbitrary norm on $\bbR^d$. Let $Y \subseteq\mathbb{\hat{R}}^d$ be a closed, simply connected set with $0$ in  its interior such that $\max_{\hat{y}\in Y}|\hat{y}|_{\infty}=1$. The set $\Omega$ is $(K,\delta_*)$-dense  with respect to $Y$ if
\begin{enumerate}
 \item[\textnormal{(i)}] $\Omega\subseteq Y_K$,  where $Y_K=KY$, and 
 \item[\textnormal{(ii)}] $\Omega$ is $\delta_*$-dense in the domain $Y_K$.
\end{enumerate}
\end{definition}

For a $K>0$ and a finite-dimensional space $\rT$,  let us define the \textit{$K$-residual} of $\rT$ as
\be{\label{res1}
R_K(\rT)=\sup_{\substack{f\in\rT \\ \|f\|=1}} \|\hat{f}\|_{\mathbb{\hat{R}}^d\backslash Y_K}.
}
Also, let $\Omega_N=\{\omega_n\}_{n=1}^{N}$ be  $(K,\delta_*)$-dense  with respect to $Y$, and $\Omega=\{\omega_n\}_{n\in\bbN}$, $\Omega_N\subseteq\Omega$, such that it yields a weighted Fourier frame. We make use of the following residual
\be{\label{res2}
\tilde{R}_K(\Omega_N,\rT)=\sup_{\substack{f\in\rT \\ \|f\|=1}}  \sqrt{\sum_{\omega \in \Omega \cap S_K }\mu_{\omega}|\hat{f}(\omega)|^2},
}
where $S_K=\hat{\bbR}^d \setminus E^\circ_{r(K)-1/2}$ and $E^\circ_{r(K)}$ is the largest inscribed ball with respect to $E^{\circ}$-norm inside $Y_K$.
Note that both of these residuals converge to zero when $K\rightarrow \infty$, since $\rT$ is finite-dimensional.   
We are ready to give our main result on NUGS.

\begin{theorem}
\label{t:admissible}
Let $\rT \subseteq \rH= \{ f \in \rL^2(\bbR^d) : \textnormal{supp}(f) \subseteq E \}$ be finite-dimensional, $E\subseteq\bbR^d$ compact, and let $\Omega_N=\{\omega_n\}_{n=1}^{N}$ be a sampling scheme.
\begin{enumerate}
  \item[\textnormal{I}] Let $\Omega_N$ be $(K,\delta_*)$-dense with respect to $Y$, with
\bes{
\delta_* < \frac{\log2}{2\pi m_E c^*},
}
where $\abs{\cdot}_*$ is an arbitrary norm on $\bbR^d$ and $c^*>0$ is the smallest constant such that $\abs{\cdot}\leq c^* \abs{\cdot}_*$.
Let also $\epsilon\in(0,\sqrt{\exp{(2\pi m_E \delta_* c^*)}\left(2-\exp{(2\pi m_E \delta_* c^*)}\right)})$.
If $K>0$ is large enough so that 
\bes{
R_K(T)\leq \epsilon,
}
then  the NUGS reconstruction $\tilde f$ given by $\R{GS_LS_data}$, with the weights defined as the measures of corresponding Voronoi regions with respect to $\abs{\cdot}_*$ in domain $Y_K$, exists uniquely and 
satisfies $\R{NUGSstability}$ with the reconstruction constant 
\be{\label{big_density}
C(\Omega_N,\rT) \leq \frac{2}{\sqrt{1-\epsilon^2 }   +1 - \exp{(2\pi m_E \delta_* c^*)} }.
} 
\item[\textnormal{II}] Let $E$ be also convex and symmetric, and $\Omega_N$ be $(K,\delta_{E^\circ})$-dense with respect to $Y$, with 
\bes{ 
\delta_{E^\circ}<\frac14.
}
Denote by $A$ the lower frame bound corresponding to the weighed Fourier frame arising from $\Omega=\{\omega_n\}_{n\in\bbN}$, $\Omega_N\subseteq\Omega$, and let $\epsilon\in(0,\sqrt{A})$.
If $K>0$ is large enough so that 
\bes{
\tilde{R}_K(\Omega_N,\rT)\leq \epsilon,
}
then  the NUGS reconstruction $\tilde f$ given by $\R{GS_LS_data}$, with the weights defined as the measures of corresponding Voronoi regions with respect to $\abs{\cdot}_{E^{\circ}}$ in  domain $Y_K$, exists uniquely and
satisfies $\R{NUGSstability}$ with the reconstruction constant 
\bes{%\label{sharp_density}
C(\Omega_N,\rT) \leq \frac{ \exp{\left(\frac12\pi m_{E} c^{\circ}\right)}}{\sqrt{A-\epsilon^2}},
}
where  $c^{\circ}>0$ is the smallest constant such that $\abs{\cdot}\leq c^{\circ} \abs{\cdot}_{E^{\circ}}$
\end{enumerate}
\end{theorem}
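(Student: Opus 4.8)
The plan is to carry out the nonuniform generalized sampling argument of \cite{1DNUGS} in $d$ dimensions, feeding it the two quantitative ingredients it needs from the weighted-frame estimates of \S\ref{s:weighted_frames}.  Write $\nm g_{\Omega_N}^2=\sum_{n=1}^{N}\mu_{\omega_n}|\hat g(\omega_n)|^2$ for the weighted sampling semi-norm, where $\mu_\omega=\textnormal{meas}(V_\omega)$ is the measure of the Voronoi cell of $\omega\in\Omega_N$ computed \emph{inside $Y_K$} (with respect to $\abs{\cdot}_*$ in case I and $\abs{\cdot}_{E^\circ}$ in case II).  Everything reduces to two estimates: (a) a global upper bound $\nm h_{\Omega_N}\le\sqrt B\,\nm h$ valid for every $h\in\rH$; and (b) a local lower bound $\nm g_{\Omega_N}\ge\sqrt{A_\rT}\,\nm g$ for every $g\in\rT$, with $A_\rT>0$.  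Granting these, $\R{GS_LS_data}$ is exactly the $\ip{\cdot}{\cdot}_{\Omega_N}$-orthogonal projection $\cP^{\Omega_N}_\rT$ of the data onto $\rT$, which exists and is unique because by (b) the form $\nm\cdot_{\Omega_N}^2$ is a genuine norm on the finite-dimensional $\rT$; and by the argument of \cite{1DNUGS} — where one writes $f-\tilde f=(\cI-\cP^{\Omega_N}_\rT)(f-\cP_\rT f)-\cP^{\Omega_N}_\rT h$, bounds $\nm{\cP^{\Omega_N}_\rT}\le\sqrt B/\sqrt{A_\rT}$ from (a) and (b) via Cauchy--Schwarz, and uses the identity $\nm{\cI-P}=\nm P$ for a nontrivial bounded idempotent $P$ — one obtains $\R{NUGSstability}$ with $C(\Omega_N,\rT)\le\sqrt B/\sqrt{A_\rT}$.

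For (a) I would re-run the proof of Theorem \ref{t:Groch_improved} essentially line by line, the only change being that the step function $\chi=\sum_n\hat h(\omega_n){\bf1}_{V_{\omega_n}}$ is now supported on $Y_K$.  The $\delta_*$-density of $\Omega_N$ \emph{in $Y_K$} still forces $|\hat y-\omega|\le\delta_* c^*$ on each cell, and restricting the Voronoi partition to $Y_K$ can only decrease the quantities $\sum_n\int_{V_{\omega_n}}|D^\alpha\hat h|^2$ that proof controlled by the full $\nm{D^\alpha\hat h}^2$ via Parseval; hence the Taylor/multinomial estimate $\nm{\hat h-\chi}_{Y_K}\le(\exp(2\pi m_E\delta_* c^*)-1)\nm h$ holds verbatim, and $\nm h_{\Omega_N}=\nm\chi\le\nm{\hat h-\chi}_{Y_K}+\nm{\hat h}\le\exp(2\pi m_E\delta_* c^*)\nm h$.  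Thus one may take $\sqrt B=\exp(2\pi m_E\delta_* c^*)<2$ in case I, and $\sqrt B\le\exp(\frac12\pi m_E c^\circ)$ in case II (using $\delta_{E^\circ}<1/4$), consistently with Remark \ref{r:lower_bound}.

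For (b) the two cases diverge.  In case I, take $g\in\rT$ with $\nm g=1$ and set $\chi=\sum_n\hat g(\omega_n){\bf1}_{V_{\omega_n}}$ as above; then $\nm\chi\ge\nm{\hat g}_{Y_K}-\nm{\hat g-\chi}_{Y_K}$, the second term is $\le\exp(2\pi m_E\delta_* c^*)-1$ exactly as in (a), and Parseval with $\R{res1}$ gives $\nm{\hat g}_{Y_K}^2=1-\nm{\hat g}_{\hat\bbR^d\setminus Y_K}^2\ge1-R_K(\rT)^2\ge1-\epsilon^2$.  Hence $\nm g_{\Omega_N}\ge\sqrt{1-\epsilon^2}+1-\exp(2\pi m_E\delta_* c^*)=:\sqrt{A_\rT}$, and the hypothesis $\epsilon<\sqrt{\exp(2\pi m_E\delta_* c^*)(2-\exp(2\pi m_E\delta_* c^*))}$ is exactly what makes $A_\rT>0$; combining with $\sqrt B<2$ yields $\R{big_density}$.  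In case II, construct the auxiliary frame set $\Omega\supseteq\Omega_N$ of Theorem \ref{t:weighted_frame} ($\delta_{E^\circ}$-dense in all of $\hat\bbR^d$) so that $\Omega\setminus\Omega_N\subseteq S_K$; this is possible since every point of $\hat\bbR^d\setminus Y_K$ has $E^\circ$-distance $>r(K)$ from the origin, hence can be $\delta_{E^\circ}$-covered (with $\delta_{E^\circ}<1/4$) by points still at distance $>r(K)-1/2$.  For such $\Omega$ and any $\omega\in\Omega_N\setminus S_K$, the $\hat\bbR^d$-Voronoi cell $V^\circ_\omega$ of $\omega$ has $E^\circ$-radius at most $\delta_{E^\circ}<1/4$, so $V^\circ_\omega\subseteq E^\circ_{r(K)}\subseteq Y_K$ and therefore $V^\circ_\omega\subseteq V_\omega$, giving $\mu_\omega\ge\textnormal{meas}(V^\circ_\omega)=:\mu^\circ_\omega$.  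Thus, for $g\in\rT$,
\bes{
\nm g_{\Omega_N}^2\ \ge\ \sum_{\omega\in\Omega_N\setminus S_K}\mu^\circ_\omega|\hat g(\omega)|^2\ =\ \sum_{\omega\in\Omega}\mu^\circ_\omega|\hat g(\omega)|^2-\sum_{\omega\in\Omega\cap S_K}\mu^\circ_\omega|\hat g(\omega)|^2\ \ge\ \big(A-\tilde R_K(\Omega_N,\rT)^2\big)\nm g^2\ \ge\ (A-\epsilon^2)\nm g^2,
}
using $\Omega\setminus S_K=\Omega_N\setminus S_K$, the lower bound $A$ of the weighted Fourier frame of Theorem \ref{t:weighted_frame} together with $\R{res2}$, and $\epsilon\in(0,\sqrt A)$.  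Hence $A_\rT=A-\epsilon^2>0$, and with $\sqrt B\le\exp(\frac12\pi m_E c^\circ)$ the bound $C(\Omega_N,\rT)\le\sqrt B/\sqrt{A_\rT}$ is the stated one.

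I expect the main obstacle to be the reconciliation in case II between the finite, truncated problem $\R{GS_LS_data}$ — whose weights are $Y_K$-Voronoi measures — and the infinite $\hat\bbR^d$-frame of Theorem \ref{t:weighted_frame}, whose lower bound $A$ is the only lower bound we have: one must choose the extension $\Omega\supseteq\Omega_N$ with its extra points confined to $S_K$, and check (using the $1/2$-buffer in the definition of $S_K$ against the covering radius $\delta_{E^\circ}<1/4$) that the two Voronoi systems agree closely enough that $\tilde R_K(\Omega_N,\rT)$ accounts for all of their discrepancy.  By contrast, re-deriving the Theorem \ref{t:Groch_improved} estimates on $Y_K$ in (a) and (b) is routine once one observes that restricting the Voronoi partition to $Y_K$ only shrinks the quantities previously controlled by full $\rL^2$ norms, and the sharp constant $C\le\sqrt B/\sqrt{A_\rT}$ (rather than $1+\sqrt B/\sqrt{A_\rT}$) comes solely from the idempotent identity $\nm{\cI-P}=\nm P$.
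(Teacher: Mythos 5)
Your proposal is correct and follows essentially the same route as the paper: it reduces to a lower bound for the weighted sampling form on $\rT$ and an upper bound on $\rH$ with $C(\Omega_N,\rT)=\sqrt{C_2/C_1}$ (the abstract step you rederive is exactly what the paper cites from \cite[Thm.\ 3.3]{1DNUGS}), and then obtains these bounds via the Taylor/step-function estimate of Theorem \ref{t:Groch_improved} restricted to $Y_K$ in case I and via truncation of the infinite weighted frame of Theorem \ref{t:weighted_frame} in case II. Your explicit choice $\Omega\setminus\Omega_N\subseteq S_K$ together with the one-sided containment $V^{E^\circ}_\omega\subseteq V_\omega$ (hence $\mu_\omega\geq\mu^{\circ}_\omega$) is a slightly more careful rendering of the boundary-Voronoi comparison that the paper handles by asserting ``no change in Voronoi regions'' for the subsequence $\Omega_N'\subseteq E^\circ_{r(K)-1/2}$.
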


\begin{proof}
Let $\cS:\rH\rightarrow\rH$, $f\mapsto\cS f = \sum_{n=1}^{N}\mu_{\omega_n}\hat{f}(\omega_n) e_{\omega_n}$. By \cite[Thm.~3.3]{1DNUGS}, if there exist positive constants $C_1=C_1(\Omega_N,\rT)$ and $C_2=C_2(\Omega_N)$ such that
\be{\label{to_prove}
\forall f \in \rT,\quad\ip{\cS f}{f} \geq C_1 \| f \|^2, \qquad\qquad \forall f \in \rH, \quad\ip{\cS f}{f} \leq C_2 \| f \|^2,
}
then the NUGS reconstruction $\tilde f$ given by \R{GS_LS_data} exists uniquely and satisfies \R{NUGSstability} with 
\be{\label{rec_const}
C(\Omega_N,\rT)=\sqrt{C_2/C_1}.
}
Therefore, it is sufficient to prove \R{to_prove}. 

Now we define
\bes{
\chi(\hat y) = \sum_{\omega\in\Omega_N} \hat{f}(\omega){\bf1}_{V^*_\omega}(\hat{y}),\quad \hat{y} \in Y_K,
}
and observe  that $\|\chi\|^2_{Y_K} = \sum_{\omega\in\Omega_N} \mu_{\omega}|\hat{f}(\omega)|^2$.
Note also
\bes{
\|f\|_{Y_K} - \|\hat{f}-\chi\|_{Y_K} \leq \|\chi\|_{Y_K} \leq \|\hat{f}-\chi\|_{Y_K} + \|f\|,
}
and, by the same reasoning as in the proof of  Theorem \ref{t:Groch_improved}, we obtain
\bes{
\|\hat{f}-\chi\|_{Y_K} \leq  \left ( \exp(2 \pi m_E \delta_* c^*) - 1 \right ) \| f \|.
}
Therefore for all $f \in \rH \backslash \{0\}$
\be{\label{step}
\left( \sqrt{1-\frac{\|\hat f\|^2_{\mathbb{\hat{R}}^d\backslash Y_K} }{\|f\|^2} }   +1 - \exp{(2\pi m_E \delta_* c^*)} \right)^2 \| f \|^2 
\leq \sum_{\omega\in\Omega_N} \mu_{\omega}|\hat{f}(\omega)|^2 \leq \exp(4 \pi m_E \delta_* c^*) \| f \|^2.
}
Hence, if $\delta_* < \log2/(2\pi m_E c^*)$ we have $\sqrt{C_2} \leq 2$ and 
\bes{
\sqrt{C_1} \geq  \sqrt{1- \epsilon^2}   +1 - \exp{(2\pi m_E \delta_* c^*)>0},
}
due to the definition of $R_K(\rT)$ \R{res1} and the assumption that 
\bes{
R_K(\rT)\leq\epsilon<\sqrt{\exp{(2\pi m_E \delta_* c^*)}\left(2-\exp{(2\pi m_E \delta_* c^*)}\right)}.
}
The first statement follows directly by using \R{rec_const}. For the second statement, where $\delta_{E^{\circ}}<1/4$, due to \R{step}, we have  $\sqrt{C_2} \leq \exp{(\pi m_{E} c^{\circ}/2)}$. However, for the lower bound  $C_1$ we proceed as follows by using Theorem \ref{t:weighted_frame}.
Since Voronoi regions are taken with respect to $Y_K$ instead of $\hat{\bbR}^d$, we need a subsequence $\Omega_N'\subseteq\Omega_N$ which has points sufficiently far from $\partial Y_K$ so there is no any change in Voronoi regions. Since $\delta_{E^\circ}<1/4$, we can take $\Omega_N'\subseteq E^\circ_{r(K)-1/2}$, where $E^\circ_{r(K)}$ is the largest inscribed ball with respect to $E^{\circ}$-norm inside $Y_K$.  Note that 
$$\Omega\setminus\Omega_N' \subseteq \Omega \cap  \left( \hat{\bbR}^d \setminus E^\circ_{r(K)-1/2}\right).$$ 
Denote $S_K=\hat{\bbR}^d \setminus E^\circ_{r(K)-1/2}$. Therefore
\eas{
\sum_{\omega\in\Omega_N} \mu_{\omega} \abs{\hat{f}(\omega)}^2 &\geq \sum_{\omega\in\Omega} \mu_{\omega} \abs{\hat{f}(\omega)}^2 - \sum_{\omega\in\Omega \setminus \Omega_N'} \mu_{\omega} \abs{\hat{f}(\omega)}^2 \\
& \geq A \|f\|^2 - \sum_{\omega\in\Omega \cap S_K} \mu_{\omega} \abs{\hat{f}(\omega)}^2.
}
where the existence of $A>0$ is provided by  Theorem \ref{t:weighted_frame}. Hence, by \R{res2},  for $C_1$ we have
\bes{
C_1 \geq A - \tilde{R}_K(\Omega_N,\rT)^2 \geq A - \epsilon^2>0.
}
Now the result follows due to \R{rec_const}.
\end{proof}

By this theorem, for a fixed  reconstruction space $\rT$,  a stable and quasi-optimal multivariate reconstruction via NUGS is guaranteed subject to sufficiently large sampling bandwidth $K$ and exactly the same sampling densities derived in Theorems \ref{t:weighted_frame} and \ref{t:Groch_improved}  that were shown to guarantee a weighted Fourier frame. In particular, in part II of this theorem, we do not require sampling density to increase in higher dimensions. However, since the lower frame bound $A$ in general is not known, this part does not provide explicit bound on the reconstruction constant $C(\Omega,\rT)$ that indicates stability and accuracy of the reconstruction. As an alternative, one can use part I of the theorem which does provide explicit bound but under more stringent  density condition. 

Additionally, residual $\tilde R_K(\Omega_N,\rT)$ used in part II of this theorem depends on both $\rT$ and $\Omega_N$, while residual $R_K(\rT)$ used in part I depends only on $\rT$.  Thus, by explicit bound \R{big_density} of the first part of this theorem, we are able to largely separate the geometric properties of the sampling scheme, i.e.~the density, from intrinsic properties of the reconstruction space $\rT$, i.e.~the $K$-residual $R_K(\rT)$.  The latter is determined solely by the decay of functions $\hat{f}$, $f \in \rT$, outside the domain $Y_K$. In other words, once $R_K(\rT)$ is estimated for any given subspace $\rT$ (see \S \ref{s:conclusions} for a discussion on this point), we can ensure a stable and quasi-optimal reconstruction for \textit{any}  nonuniform sampling scheme which is $(K,\delta_*)$-dense with small enough $\delta_*$.

\subsection{Relation to previous work} \label{ss:implementation_and_relation}

The function recovery method NUGS used in this paper is based on the work of the authors \cite{1DNUGS}. This is a special instance of a more general approach of sampling and reconstruction in abstract Hilbert spaces, known as generalized sampling (GS).  Although introduced by two of the authors in \cite{BAACHShannon} it has its origins in earlier work of Unser \& Aldroubi \cite{unser1994general}, Eldar \cite{eldar2003sampling}, Eldar \& Werther \cite{eldar2005general}, Gr\"ochenig \cite{GrochenigTrigonometric, GrochenigModernSamplingBook}, Hrycak \& Gr\"ochenig \cite{hrycakIPRM}, Shizgal \& Jung \cite{PolynomialJung},
Aldroubi \cite{AldroubiAverageSamp} and others.

In  \cite{GrochenigTrigonometric} (see also \cite{GrochenigModernSamplingBook, GrochenigStrohmerMarvasti,  FeichtingerEtAlEfficientNonuniform}), the problem of recovering a bandlimited function from its own nonuniform samples was considered, where the arbitrary clustering is addressed by using weighted Fourier frames, exactly the same as we do in this paper.  Specifically, Gr\"ochenig et al.~developed an efficient algorithm for the nonuniform sampling problem, known as the ACT algorithm (Adaptive weights, Conjugate gradients, Toeplitz) where they consider the reconstruction of bandlimited functions in a particular finite-dimensional space consisting of trigonometric polynomials.  This corresponds to a specific instance of NUGS with a Dirac basis for $\rT$. The recovery model of compactly supported functions in a Dirac basis, with applications to MRI, was considered in \cite{KnoppKunisPotts}. As discussed in \cite{1DNUGS}, the main advantage offered by NUGS is that it allows for arbitrary reconstruction subspaces $\rT$. For example, $\rT$ may consist of compactly supported wavelets since it is well-known that multidimensional images in applications such as MRI and CT are well represented using compactly supported wavelets \cite{UnserAldroubiWaveletReview}.

The result from Theorem \ref{t:admissible}, extends the work of Gr\"ochenig et al.~in two ways.  First, we have a less stringent density requirement based on the bounds derived in Theorems \ref{t:weighted_frame} and \ref{t:Groch_improved}.  Second, we allow for arbitrary choices of $\rT$ which can be tailored to the particular function $f$ to be recovered. In particular, convergence and stability of the ACT algorithm  \cite[Thm.\ 7.1]{GrochenigModernSamplingBook} are guaranteed by the sufficient sampling density and the explicit weighted frame bounds given in \cite[Prop.\ 7.3]{GrochenigModernSamplingBook} (Theorem \ref{t:Grocg_orig} here). Therefore, the bounds derived in Theorem \ref{t:Groch_improved} directly improve the guarantees for ACT algorithm. Moreover, the bounds derived in Theorem \ref{t:Groch_improved} directly improve the existing estimates from \cite{KunisPottsSIAM,PottsTasche} for efficient and reliable computation of trigonometric polynomials, which are based on Gr\"ochenig's original bounds from \cite{GrochenigModernSamplingBook}.

On the other hand, in MRI and several other applications, a popular algorithm for reconstruction from nonuniform Fourier samples is known as the iterative reconstruction techniques \cite{FesslerFastIterativeMRI}, see also \cite{pruessmann2001}.  This can also be viewed as an instance of NUGS, where $\rT$ is a space of piecewise constant functions on a $M \times M$ grid (the term `iterative' refers to the use of conjugate gradients to compute the reconstruction).  Equivalently, when $M$ is a power of $2$, then $\rT$ can be expressed as the space spanned by Haar wavelets up to some finite scale.  As a result, Theorem \ref{t:admissible} also provides guarantees for the iterative reconstruction techniques.  Importantly, we shall also show how NUGS allows one to obtain better reconstructions, by replacing the Haar wavelet choice for the subspace $\rT$ with higher-order wavelets.

In addition to aforementioned algorithms, it is also worth mentioning that there exists a vast wealth of other methods for solving the same (or equivalent) recovery problem from nonuniform Fourier samples that are fundamentally different than ours. Unlike some common approaches in MRI, such as gridding \cite{JacksonEtAlGridding}, resampling \cite{RosenfeldURS2} or earlier mentioned iterative algorithms \cite{FesslerFastIterativeMRI}, we do not model  $f$ as a finite-length Fourier series, or as a finite array of pixels, but rather as a function in $\rL^2$-space. Hence, by using an appropriate approximation basis, we successfully avoid the unpleasant artefacts (e.g. Gibbs ringing) associated with gridding and resampling algorithms and also we gain more accuracy than with the iterative algorithms (see \S \ref{s:num_results}). On the other hand, there are approaches commonly found in nonuniform sampling theory which do use analog model but whose reconstruction is based on an iterative inversion of the frame operator \cite{BenedettoFrames, BenedettoSpiral,  FeichtingerGrochenigIrregular, AldroubiGrochenigSIREV}. Since in practice one has only finite data, these approaches typically lead to large truncation errors (similar to Gibbs phenomena), and additionally, a long computational time in more than one dimension.

\section{Examples of sufficiently dense sampling schemes} \label{s:dense_ss}

In the next section, we illustrate NUGS  on several numerical examples, where we use a number of sampling schemes commonly found in practice. Herein, we consider functions supported on $E=[-1,1]^2$. In Theorem \ref{t:weighted_frame}, we require a sampling scheme $\Omega$ to satisfy 
\be{\label{delta_14}
\delta_{E^\circ}(\Omega) < \frac{1}{4},
}
where $E^\circ$ is the unit ball in $\ell^1$-norm, or, according to Theorem \ref{t:Groch_improved}, a more strict density condition
\be{\label{delta_small}
\delta_{\cB_1}(\Omega) < \frac{\log2}{2\pi m_E}
}
(we have chosen  $\abs{\cdot}_*=\abs{\cdot}$ for simplicity). Recall that $m_{E}=\sqrt{2}$ if $E=[-1,1]^2$.
In this section, we construct some sampling schemes such that they satisfy these density conditions.
Note that for $E=[-1,1]^2$ we have 
\bes{%\label{delta_difS_relation}
\delta_{E^\circ} (\Omega) \leq \sqrt{2} \delta_{\cB_1} (\Omega). 
}
Hence, to have \R{delta_14} it is enough to enforce $\delta_{\cB_1}(\Omega) < {1}/{(4\sqrt{2})}$. The condition 
\be{\label{delta_D}
\delta_{\cB_1}(\Omega) < D, 
}
where $D>0$ is a given constant,  can be easily checked on a computer for an arbitrary nonuniform sampling scheme $\Omega$. Moreover, as we shall show below, for special sampling schemes, e.g. polar and spiral, it is always possible to construct them so that they satisfy the condition \R{delta_D}. The advantage of considering density condition in the Euclidean norm lies in its symmetry.

We mention that in \cite{BenedettoSpiral}, one can find a construction of a spiral sampling scheme satisfying condition \R{delta_D}. Here, we use a slightly different spiral scheme, one which has an accumulation point at the origin and cannot be treated without weights. More precisely, we use the constant angular velocity spiral, whereas Benedetto \& Wu  \cite{BenedettoSpiral} use the constant linear velocity spiral (see \cite[Fig 2]{spyralScienceDirect}). Also, besides giving a sufficient condition for a spiral sampling scheme in order to satisfy \R{delta_D}, we provide both sufficient and necessary condition such that polar and jittered sampling schemes are appropriately dense.

\subsection{Jittered sampling scheme}

This sampling scheme is a standard model for jitter error, which appears when the measurement device is not scanning exactly on a uniform grid; see Figure \ref{fig:ss}. Due to its simplicity, we can consider directly the condition \R{delta_14}, and then, for completeness, we consider also \R{delta_small}.
For a given sampling bandwidth $K>0$ and parameters $\epsilon>0$ and $\eta\geq0$, we define the jittered sampling scheme as
\be{\label{jitter_ss}
\Omega_K=\left\{(n,m)\epsilon+\eta_{n,m}: n,m=-\lfloor K/\epsilon \rfloor,\ldots,\lfloor K/\epsilon \rfloor\right\},
}
where $\eta_{n,m}=(\eta^{x\vphantom{y}}_{n,m},\eta^{\vphantom{x}y}_{n,m})$ with $\eta^{x\vphantom{y}}_{n,m}$ and $\eta^{\vphantom{x}y}_{n,m}$ such that  $|\eta^x_{n,m}|,|\eta^y_{n,m} | \leq\eta$. Note that $\Omega_K\subseteq Y_{K'}=K'[-1,1]^2$, where $K'=\epsilon\lfloor K/\epsilon \rfloor+\eta$.
Now, the following can easily be seen:

\prop{Let $E=[-1,1]^2$. Let also $K>0$, $\epsilon>0$ and $\eta\geq0$ be given, and define $K'=\epsilon\lfloor K/\epsilon \rfloor+\eta$. The sampling scheme $\Omega_K$ defined by $\R{jitter_ss}$ is 
\begin{enumerate}
 \item $(\delta_{E^\circ},K')$-dense with respect to $Y=[-1,1]^2$ and with $\delta_{E^\circ}<1/4$  if and only if 
$\epsilon+2\eta<1/4$.
 \item $(\delta_{\cB_1},K')$-dense with respect to $Y=[-1,1]^2$ and with $\delta_{\cB_1}<(\log2)/(2\pi\sqrt{2})$  if and only if $\epsilon+2\eta<(\log2)/(2\pi)$.
\end{enumerate}
}

\subsection{Polar sampling scheme}\label{ss:polar}

Here, we discuss an important type of sampling scheme used in MRI and also whenever the Radon transform is involved in sampling process, see Figure \ref{fig:ss}.
For a given sampling bandwidth $K>0$ and separation  between consecutive concentric circles $r>0$ we define a polar sampling scheme as 
\be{\label{polar_ss}
\Omega_K=\left\{m r \E^{\I n\Delta \theta}\ :\  m=-\lfloor K/r \rfloor,\ldots,\lfloor K/r \rfloor,\ n=0,\ldots, N-1  \right\},
}
where $\Delta \theta = \pi/N \in(0,\pi)$ is the angle between neighbouring radial lines and $N\in\mathbb{N}$ is the number of radial lines in the upper half-plane. Note that $\Omega_K \subseteq \cB_{r\lfloor K/r \rfloor}\subseteq \hat\bbR^2$.  In what follows we shall assume that $K/r\in\bbN$ for simplicity.

\prop{Let  $D>0$, $K>D$, and $ r\in(0,2D)$ be given such that $K/r\in\bbN$.
The sampling scheme $\Omega_K$ defined by $\R{polar_ss}$ is $(K,\delta_{\cB_1})$-dense with respect to $Y=\cB_1$ and with
\bes{
\delta_{\cB_1} (\Omega_K) < D
}
if and only if
\be{\label{theta_polar_con}
\Delta \theta < 2 \min\left\{\arctan \frac{\sqrt{ D^2-\left({r}/{2}\right)^2 }}{K-r/2} , \arccos\left(1-\frac{D^2}{2K^2}\right) \right\}.
}
}

\begin{proof}
To prove this claim, we need to calculate
\bes{
\delta_{\cB_1} (\Omega_K)=\sup_{\hat{y}\in\cB_K}\inf_{\omega\in\Omega_K}|\hat{y}-\omega|_{\cB_1}.
}
First note that, due to the definition of Voronoi regions \ref{Voronoi}, we have
\be{\label{delta_polar_ex}
\delta_{\cB_1}(\Omega_K)=\sup_{\omega\in\Omega_K}\sup_{\hat{y}\in V_{\omega}}|\hat{y}-\omega|_{\cB_1},
} 
where $V_{\omega}$ is the Voronoi region at $\omega$ with respect to the Euclidean norm and inside the domain $\cB_K$. Therefore, we have to find the maximum radius of all Voronoi regions inside $\cB_K$, where the radius of a Voronoi region $V_{\omega}$ is defined as the radius of the Euclidean ball described around $V_{\omega}$ and  centered at $\omega$. Since the Voronoi regions are taken with respect to the Euclidean norm, they are convex polygons \cite{Klein}, and hence, the Voronoi radius is always achieved at a vertex which is furthest away from the center.

Since $\Omega_K$ is a polar sampling scheme with the uniform separation between consecutive concentric circles, the largest Voronoi radius is achieved at some of the vertices positioned between the two most outer circles of $\Omega_K$, including the most outer circle. Note that, by the definition of Voronoi regions, a joint vertex of two adjacent Voronoi regions $V_{\omega}$ and $V_{\omega'}$ is equally distant from both points $\omega$ and $\omega'$.  Therefore, without loss of generality, in \R{delta_polar_ex}, we may consider only the sampling points form $\Omega_K$ that are at the most outer circle.

Next, since $\cB_1$ is symmetric with respect to any direction, and due to the symmetry of a polar sampling scheme, in \R{delta_polar_ex}, without loss of generality we may assume, that $\omega = K\E^{\I 0}$, and $\hat y \in  \left\{s\E^{\I \theta}: s\in(K-r,K],\ \theta\in[0,\Delta\theta/2]\right\} \cap V_\omega$. Denote $\omega'=(K-r)\E^{\I 0}$.
We now conclude that \R{delta_polar_ex} is achieved at some of the following two vertices of $V_{\omega}$, which are also the only vertices of $V_\omega$ contained in the region $\left\{s\E^{\I \theta}: s\in(K-r,K],\ \theta\in[0,\Delta\theta/2]\right\}$:
\begin{enumerate}
 \item $v_1= \frac{K-r/2}{\cos\theta_0} \E^{\I\Delta\theta/2}$, which is the joint vertex for adjacent $V_{\omega}$ and $V_{\omega'}$ lying on the radial line corresponding to angle $\Delta\theta/2$, at the equal distance $d_1(\Delta\theta)$ from both points $\omega$ and $\omega'$. This point $v_1$ is easily calculated by equating the distances $|s\E^{\I\Delta\theta/2}-\omega|$ and $|s\E^{\I\Delta\theta/2}-\omega'|$. One derives
 $$d_1(\Delta\theta)=\sqrt{\left({r}/{2}\right)^2+\left(\left(K-{r}/{2}\right)\tan(\Delta\theta/2)\right)^2}.$$
 \item $v_2=K\E^{\I\Delta\theta/2}$, which is a vertex of $V_{\omega}$ lying on the radial line corresponding to $\Delta\theta/2$ and at the most outer circle, at the distance 
 $$d_2(\Delta\theta)=K\sqrt{2-2\cos(\Delta\theta/2)}.$$
\end{enumerate}
Hence, having $\delta_{\cB_1} (\Omega_K) < D$ in the domain $\cB_K$ is equivalent to 
\bes{
\max\{d_1(\Delta\theta),d_2(\Delta\theta)\} < D.
}
This is equivalent to
\bes{
\Delta \theta < 2 \min\left\{\arctan \frac{\sqrt{ D^2-\left({r}/{2}\right)^2 }}{K-r/2} , \arccos\left(1-\frac{D^2}{2K^2}\right) \right\},
}
which proves our claim.
\end{proof}

This proposition asserts that $\delta_{\cB_1}$-density of a polar sampling scheme is satisfied if and only if the corresponding angle $\Delta\theta$ is sufficiently small and taken according to the formula \R{theta_polar_con}. From \R{theta_polar_con}, it is evident that the angle $\Delta\theta$ goes to zero when $K\rightarrow \infty$. Therefore, the condition $\delta_{\cB_1} (\Omega_K) < D$ implies that the points $\Omega_K$ accumulate at the inner concentric circles as $K$ increases.  Thus, the unweighted frame bounds for the frame sequence corresponding to $\Omega_K$ clearly blow up as $K \rightarrow \infty$, which can be prevented by using the weights.

\subsection{Spiral sampling scheme}\label{ss:spiral}
For a given $r>0$,  
\be{\label{spiral_traj}
S_r(\theta)=r\tfrac{\theta}{2\pi}\E^{\I  \theta}, \quad \theta\geq0,
}
is a spiral trajectory in $\hat{\bbR}^2$ with the constant separation $r$ between the spiral turns. If $\theta\in[0,2\pi k]$ for $k\in\bbN$, then the number of turns in the spiral is exactly $k$. For given $r>0$ and $k\in\mathbb{N}$, let $Y_{rk}\subseteq \mathbb{\hat{R}}^2$ be defined as 
\be{\label{spiral_domainY}
Y_{rk}= \left\{  S_{\rho}(\theta) : \rho\in[0,r],\ \theta \in[0,2 \pi k] \right\},
}
Then  $S_r(\theta)\subseteq Y_{rk} \subseteq \cB_{rk}$, for $\theta\in[0,2\pi k]$. 

Now, let $K>0$ and $r>0$ be given, and for simplicity assume that they are such that $K/r=k\in\bbN$. We define a spiral sampling scheme as 
\be{\label{spiral_ss}
\Omega_K=\left\{r \tfrac{n\Delta\theta}{2\pi}\E^{\I  n\Delta\theta} : n=0,\ldots, Nk \right\}.
}
where $\Delta \theta= 2\pi/N \in(0,\pi)$, $N\in\bbN$, is a discretization angle.
Note that this $\Omega_K$ represents a discretization of the spiral trajectory \R{spiral_traj}, which  consists of $k$ turns with the constant separation $r$ between them and with a constant angular distance $\Delta \theta$. Also, note that $\Omega_K\subseteq Y_K = KY\subseteq \cB_K \subseteq \hat{\bbR}^2$, where $Y$ is 
\be{\label{spiral_domainY1}
Y= \left\{  \rho\tfrac{\theta}{2\pi}\E^{\I  \theta} : \rho\in[0,1],\ \theta \in[0,2 \pi] \right\}
}
i.e., $Y$ is given by \R{spiral_domainY} for $r=k=1$.

\prop{Let $D>0$, $K>4/5 D$ and let $r\in(0,2D)$  be given such that $K/r=k\in\bbN$. The sampling scheme $\Omega_K$ defined as $\R{spiral_ss}$ is  $(\delta_{\cB_1},K)$-dense with respect to $Y$ given by $\R{spiral_domainY1}$ and with 
\bes{
\delta_{\cB_1}(\Omega_K)<D
} 
if  
the angle $\Delta \theta$ is chosen small enough depending on $k$.
}

\begin{proof}
To prove this claim, we want to estimate $\delta_{\cB_1}(\Omega_K)$. First note that the distance from any point inside region $Y_{rk}$ to the spiral trajectory $S_r(\theta)$, $\theta\in[0,2\pi k]$, is at most $r/2$, see  \cite[Eq.~(18)]{BenedettoSpiral}. Also, note that the distance from any point on the spiral trajectory $S_r(\theta)$, $\theta\in[0,2\pi k]$, to a point from $\Omega_K$ is at most $|S_r(2\pi k)-S_r(2\pi k-\Delta\theta/2)|$. Hence,  as in \cite{BenedettoSpiral}, by the triangle inequality we obtain
\bes{
\delta_{\cB_1}(\Omega_K)\leq\frac{r}{2}+|S_r(2\pi k)-S_r(2\pi k-\Delta\theta/2)|.
}
Therefore, the density condition is satisfied if $\Delta\theta$ is such that
\bes{
d_{r,k}(\Delta\theta)=|S_r(2\pi k)-S_r(2\pi k-\Delta\theta/2)| < D-\frac{r}{2}.
}
Hence, it is enough to choose $\Delta\theta$ as
\bes{
\Delta\theta < \tilde\theta,
}
where $\tilde\theta$ is such that $d_{r,k}(\tilde\theta)=D-r/2$. This $\tilde \theta$ exists and it is unique on the interval $(0,\pi)$, since the function $d_{r,k}(\cdot)$ is continuous and strictly increasing on $(0,\pi)$ and also
\bes{
\lim_{\Delta\theta\rightarrow0}d_{r,k}(\Delta\theta)= 0 < D-\frac{r}{2}, \quad  \lim_{\Delta\theta\rightarrow\pi}d_{r,k}(\Delta\theta)=  r \sqrt{k^2+\left(k-\frac14\right)^2}  \geq  \frac54 K > D-\frac{r}{2}.
}
\end{proof}

Let us mention here that in a similar manner an interleaving spiral sampling scheme can be analyzed. An interleaving spiral  consists of multiple single spirals. Both of these spiral sampling schemes are shown in Figure \ref{fig:ss}.

\section{Numerical results}\label{s:num_results}

Finally, in this section, we present several numerical experiments illustrating some of the developed theory.

First, we demonstrate the use of weights when reconstructing from nonuniform Fourier measurements. Some of the advantages of using  weights have been already  reported earlier in the literature, see for example \cite{FeichtingerGrochenigIrregular,FeichtingerEtAlEfficientNonuniform,GrochenigStrohmerMarvasti} and also \cite{JacksonEtAlGridding,FesslerFastIterativeMRI}. In a different setting, in Figure \ref{fig:weights_new}, we provide further insight on  the necessity of using weights. To this end, we test a polar sampling scheme which is constructed as in \S \ref{ss:polar}. From the given set of samples, we perform function recovery using NUGS with boundary corrected Daubechies wavelets of order 1, 2 and 3, as well as the direct recovery approach called gridding \cite{JacksonEtAlGridding}. We perform function recovery with and without using weights, using 10 iterations in the conjugate gradient method used for solving the least squares corresponding to the NUGS reconstruction \R{GS_LS_data}. As shown in Figure \ref{fig:weights_new}, the reconstruction error without using weights does not exceed order $10^{-2}$. Hence, the advantages of higher order wavelets cannot be easily exploited in this case, as opposed to the case when reconstructing with weights. Moreover, the gridding reconstruction obtained without using weights is distinctly inferior. We recall that gridding reconstruction is computed with only one iteration, i.e.~with a single  NUFFT.

 \begin{figure}
\centering
$\begin{array}{ccccc}
 & \text{\small{Haar}} & \text{\small{DB2}} & \text{\small{DB3}} & \text{\small{gridding}} \\
{\rotatebox{90}{\qquad\quad\text{\small{weights}}}} & \includegraphics[scale=0.238, trim=0cm 0cm 0cm 0cm]{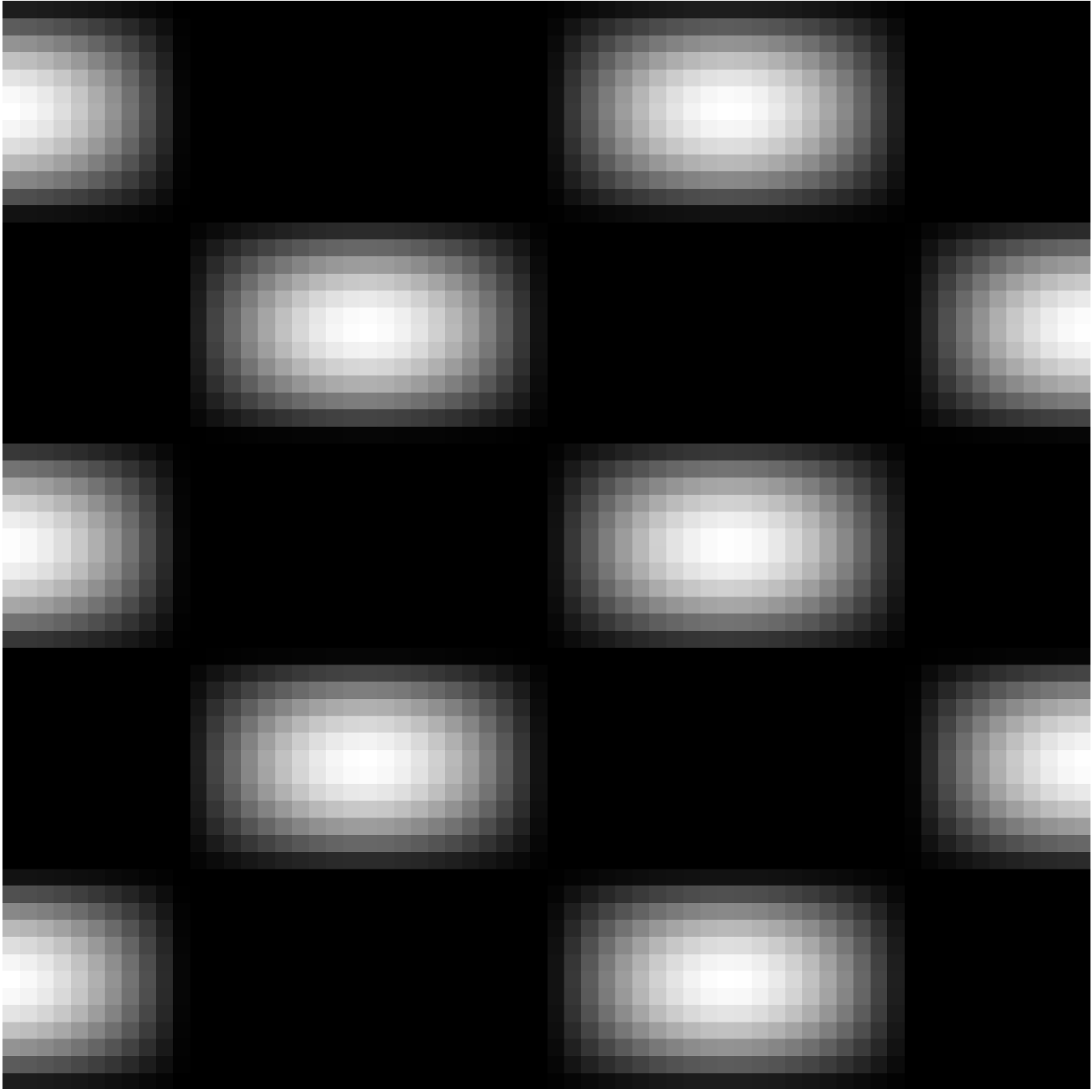} & \includegraphics[scale=0.238, trim=0cm 0cm 0cm 0cm]{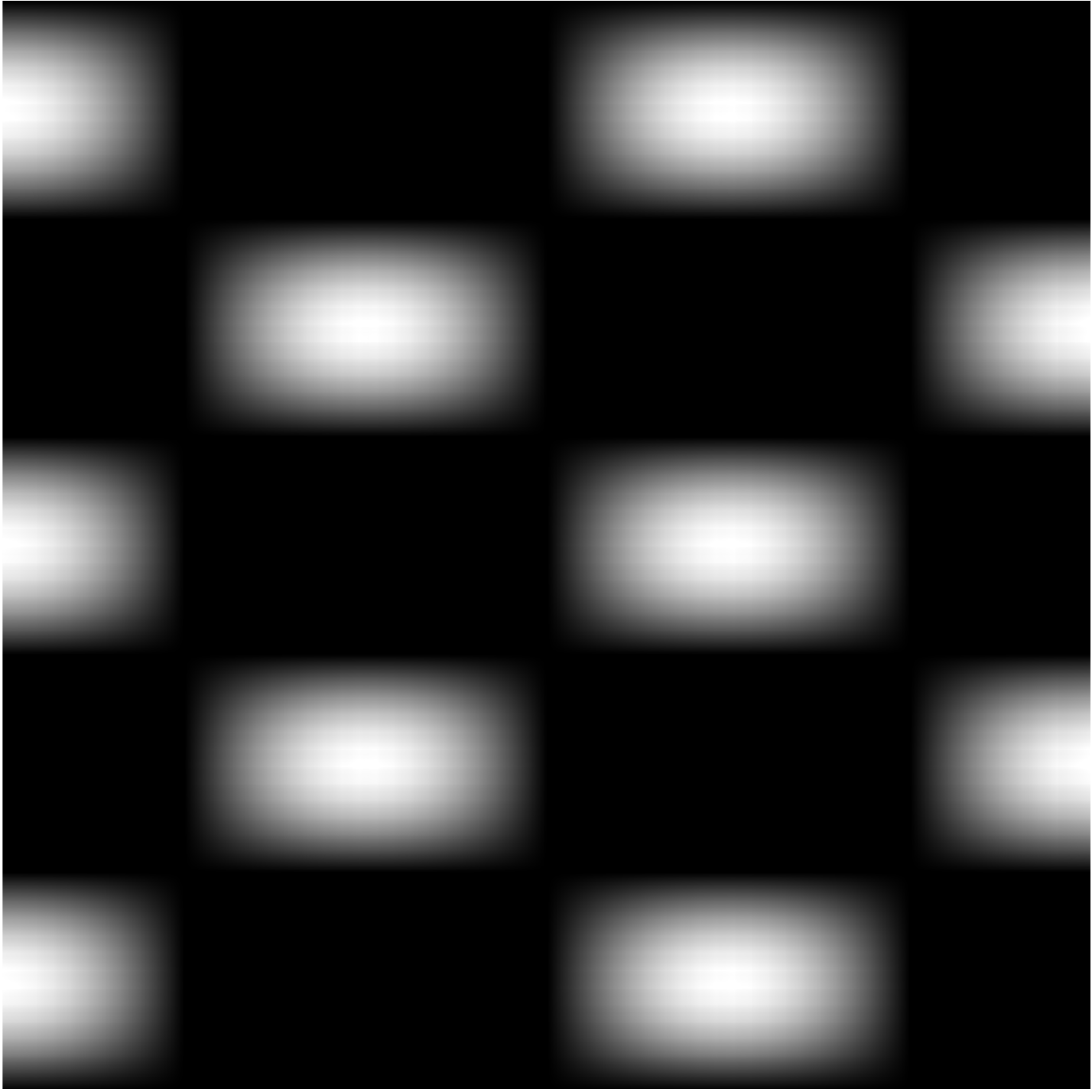} & \includegraphics[scale=0.238, trim=0cm 0cm 0cm 0cm]{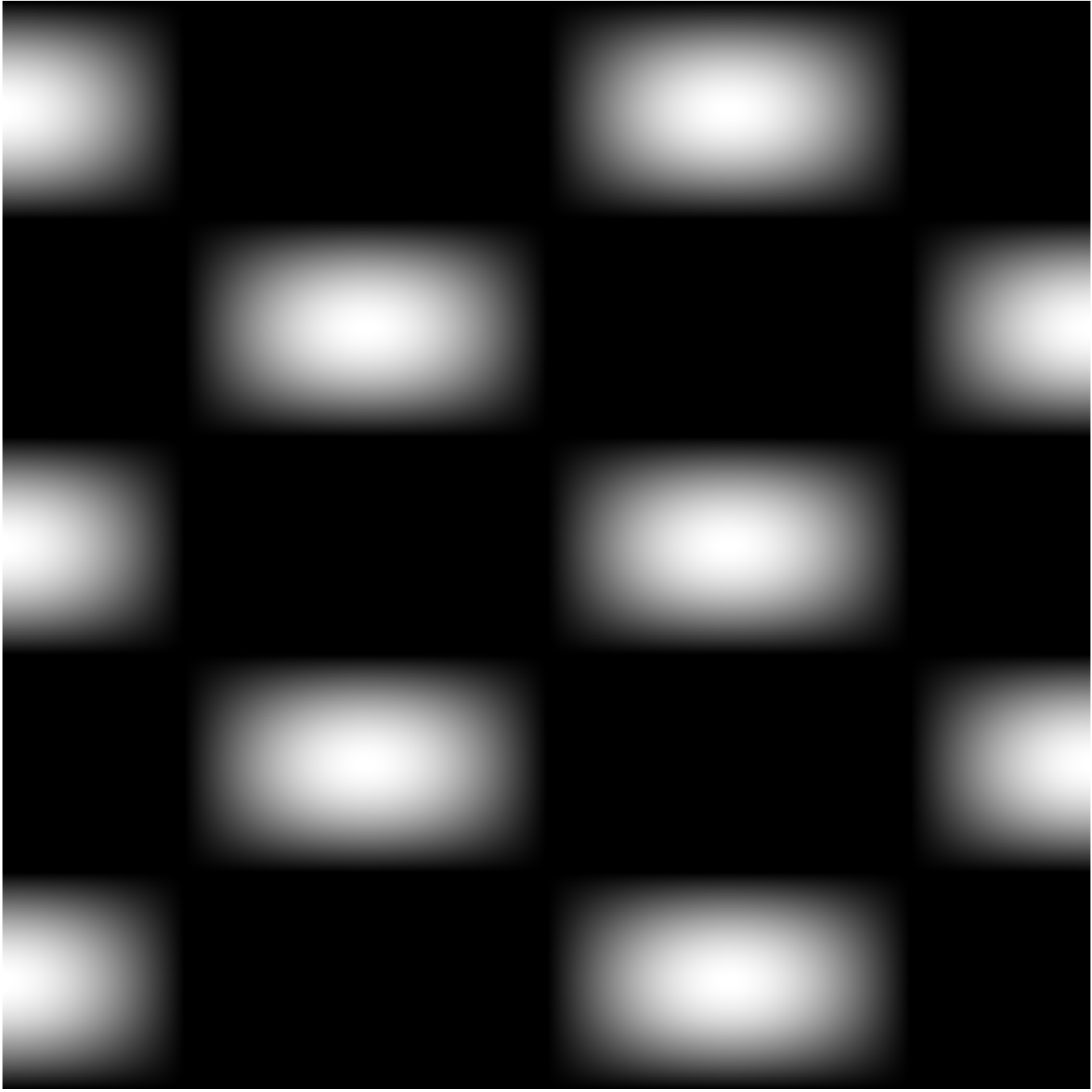} & \includegraphics[scale=0.238, trim=0cm 0cm 0cm 0cm]{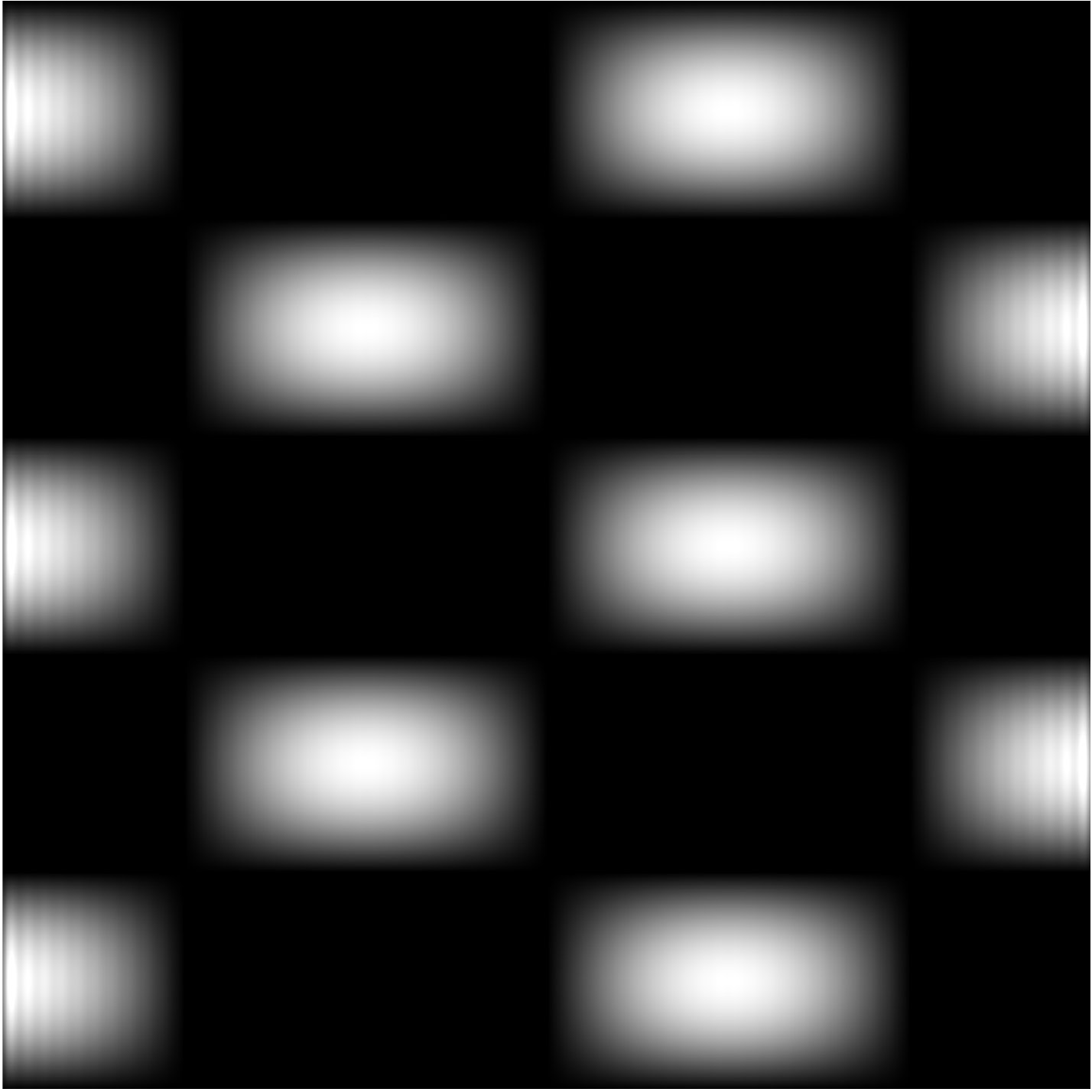}\\
 & \text{\small{$\|f-\tilde{f}\|\approx 4.13\times10^{-2}$}} & \text{\small{$\|f-\tilde{f}\|\approx 3.74\times10^{-3}$}} & \text{\small{$\|f-\tilde{f}\|\approx7.96\times10^{-4}$}} & \text{\small{$\|f-\tilde{f}\|\approx1.98\times10^{-2}$}}  \\
{\rotatebox{90}{\qquad\quad\text{\small{no weights}}}} & \includegraphics[scale=0.238, trim=0cm 0cm 0cm -0.25cm]{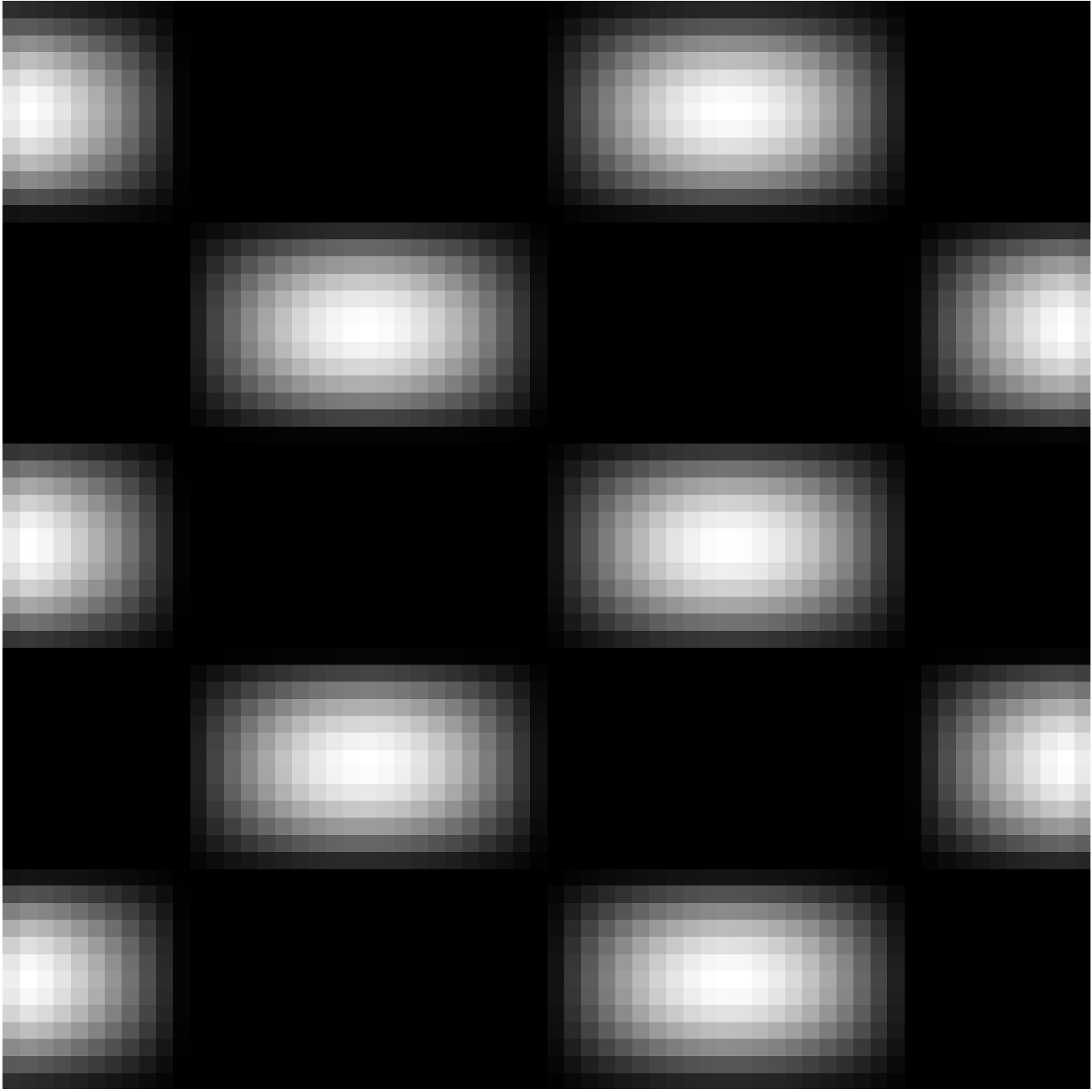} & \includegraphics[scale=0.238, trim=0cm 0cm 0cm -0.25cm]{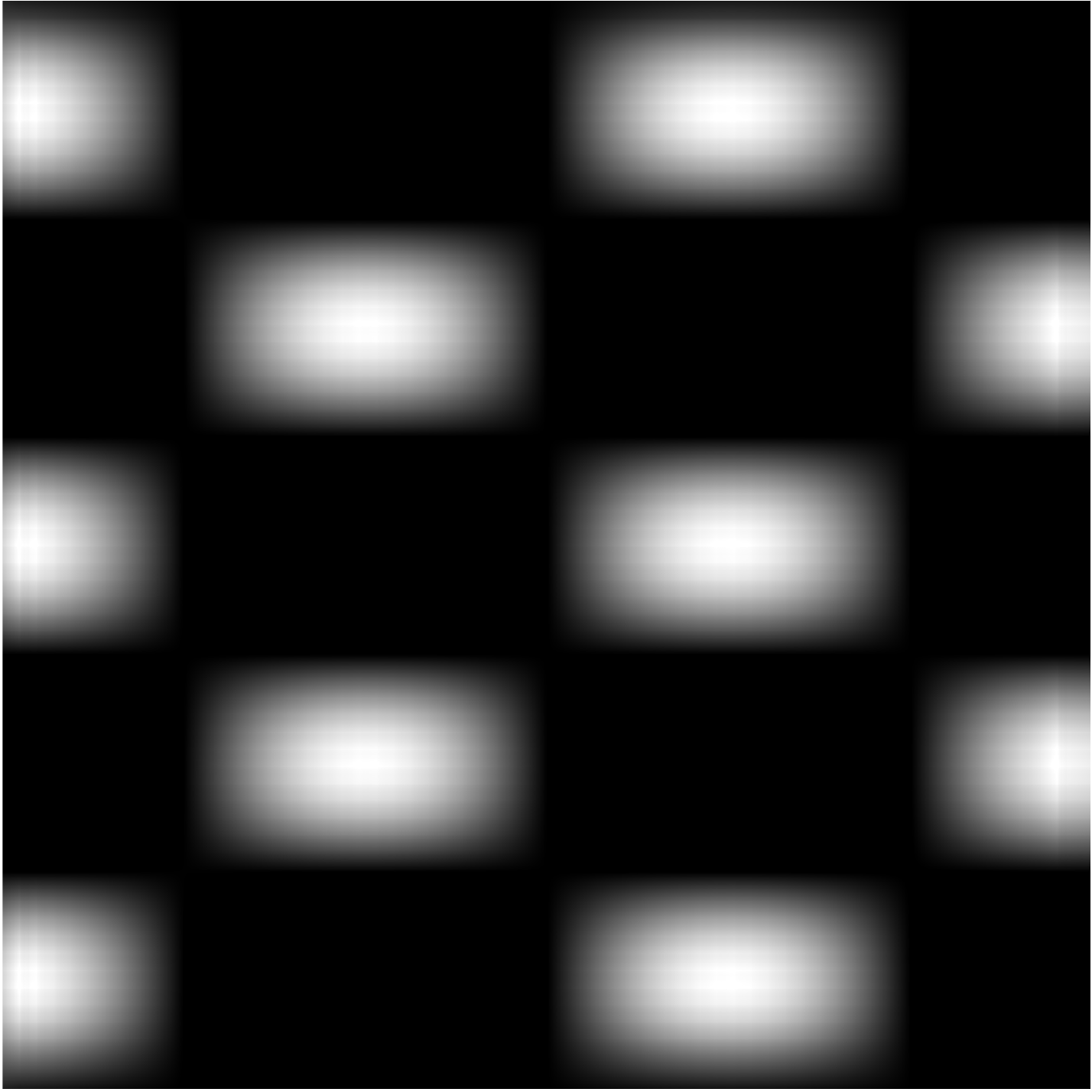} & \includegraphics[scale=0.238, trim=0cm 0cm 0cm -0.25cm]{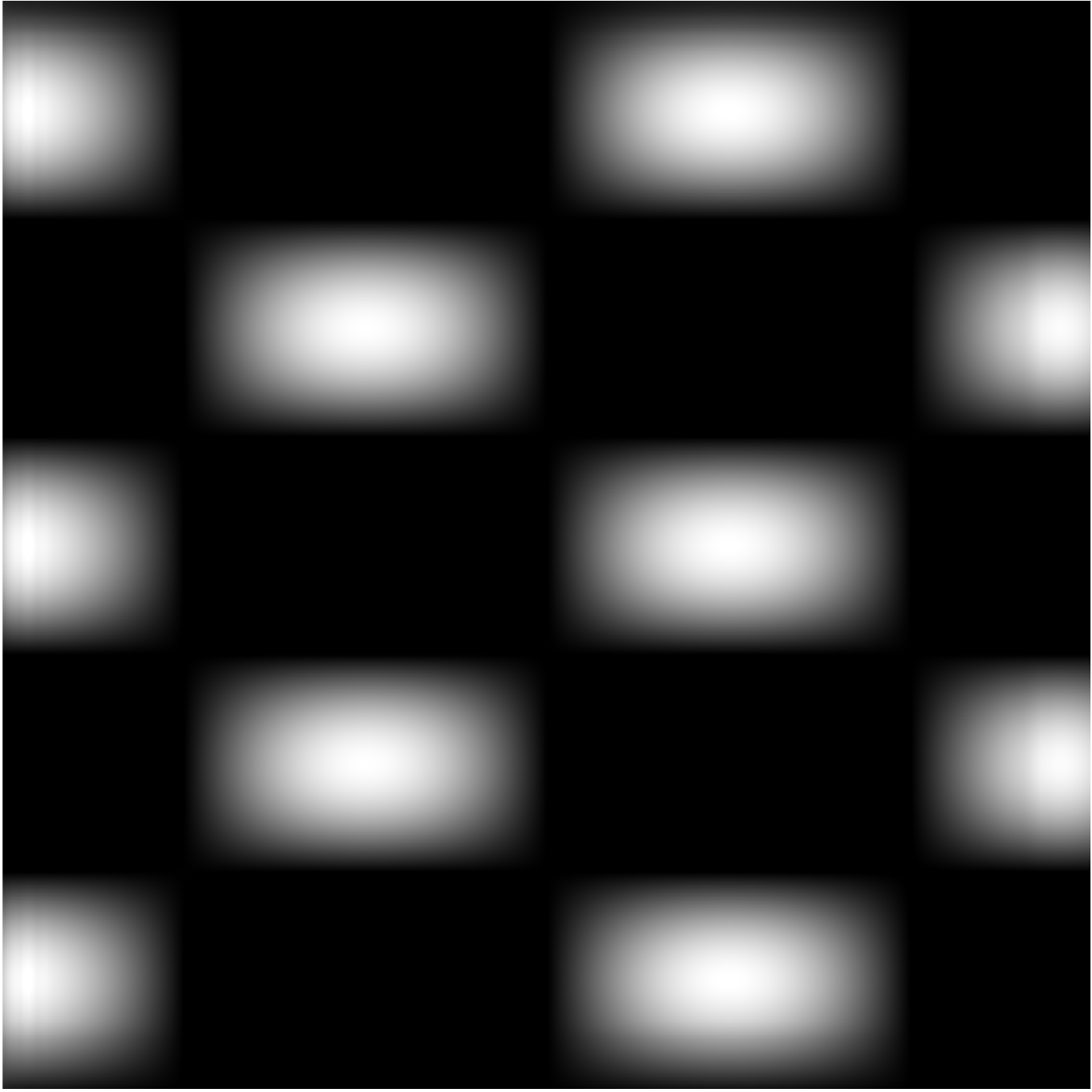} & \includegraphics[scale=0.238, trim=0cm 0cm 0cm -0.25cm]{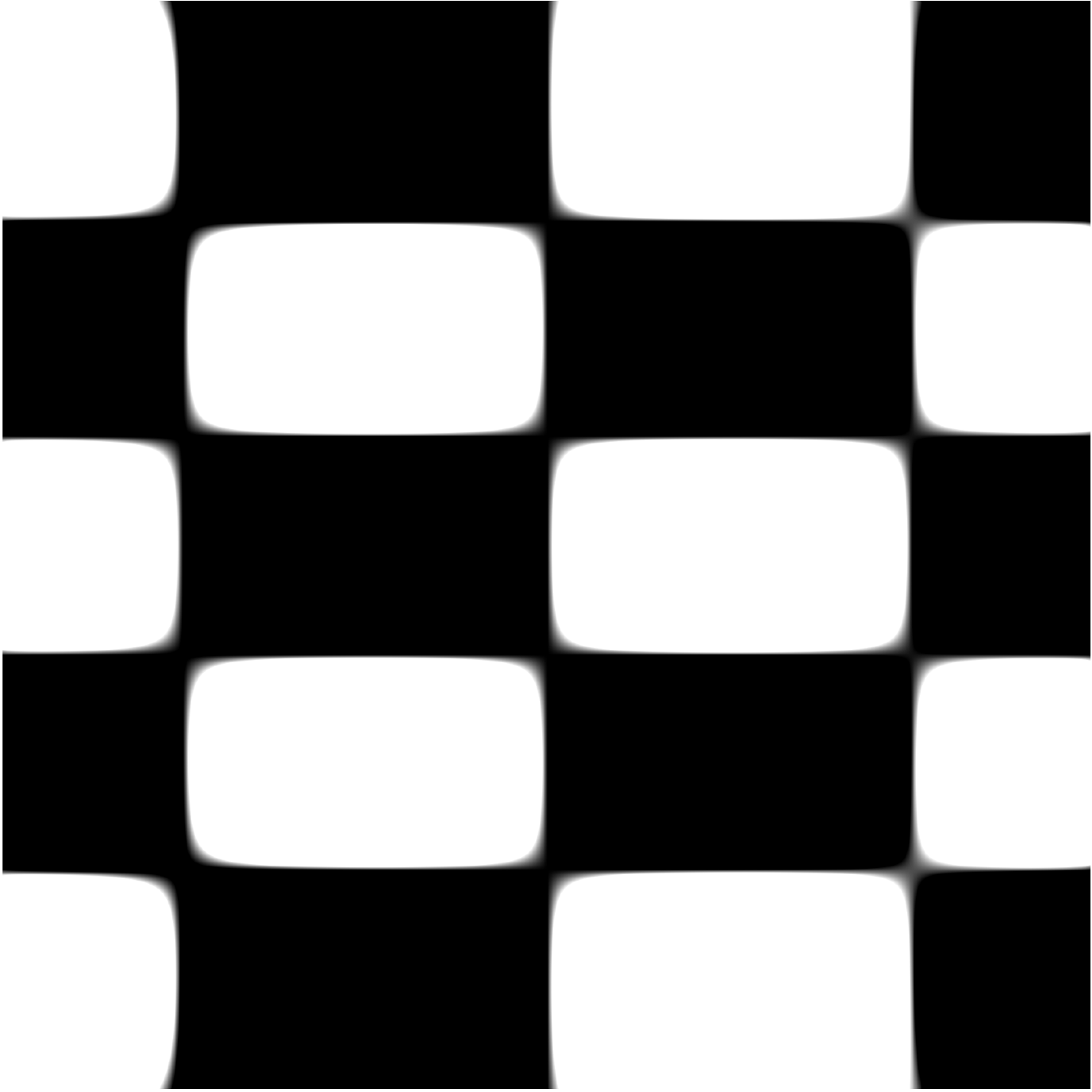} \\
& \text{\small{$\|f-\tilde{f}\|\approx4.32\times 10^{-2}$}} & \text{\small{$\|f-\tilde{f}\|\approx1.41\times10^{-2}$}} & \text{\small{$\|f-\tilde{f}\|\approx1.42\times10^{-2}$}} & \text{\small{$\|f-\tilde{f}\|\approx1.13\times10$}}  
\end{array}$
\caption{\small{Reconstructions of the function $f(x,y)=\sin(5/2\pi (x+1))\cos(3/2\pi (y+1)){\bf1}_{[-1,1]^2(x,y)}$ from Fourier samples taken on the radial sampling scheme in the Euclidean ball of radius $K=32$ with the density measured in $\ell^1$-norm strictly less than $1/4$. The lower pictures are reconstructed without using weights and, as demonstrated, the $\rL^2$-error does not exceed order $10^{-2}$. The NUGS reconstruction is computed with $64\times 64$ Haar, DB2 or DB3 wavelets.}}
\label{fig:weights_new}
\end{figure}

As noted earlier, NUGS with Haar wavelets is essentially equivalent to the iterative algorithms such as the one found in \cite{FesslerFastIterativeMRI} that use pixel basis.  As demonstrated in Figure \ref{fig:weights_new} for the two-dimensional setting (see \cite{1DNUGS} for univariate examples), the major advantage of NUGS is the possibility to change the approximation space $\rT$  and achieve better reconstructions.

Next, in Figure \ref{fig:density_det}, we examine how violation of the density condition given in Theorem \ref{t:weighted_frame} and part II of Theorem \ref{t:admissible} influences reconstruction of a high resolution test image. We use polar sampling schemes with different number of radial lines $n$ along which samples are acquired. Recall that the density condition from Theorem \ref{t:weighted_frame} is only sufficient, but not necessary to have a weighted Fourier frame, and that it is sharp in the sense that there exist a set of sampling points with $\delta_{E^{\circ}}=\delta_1=1/4$  and a function which violate the frame condition. Yet for a fixed function and set of sampling points, a slight violation of the density condition may not worsen the recovery guaranteed by the II part of Theorem \ref{t:admissible}.  As evident in the presented example from Figure \ref{fig:density_det}, a slight violation of $\delta_{E^{\circ}}<1/4$ does not impair the recovery noticeably therein.
However, it is evident that further decreasing of number of radial lines $n$, i.e.~decreasing of sampling density, worsens the quality of the reconstructed image. Also, as illustrated in Table \ref{tab:density_cond_num_t}, this decreasing of sampling density, i.e.~increasing of $\delta$, causes blowing up of the condition number associated to the least-squares system \R{GS_LS_data}.

\begin{table}[H]
\centering
{
\begin{tabular}{ |c||c|c|c|c|c|c| }
\hline
$n$ & 345 & 173 & 87 & 44 & 22 & 11 \\ \hline \hline
$\delta_2$ & 0.1763 & 0.3064  & 0.5847 & 1.1437 & 2.2843 & 4.5547 \\ \hline 
$\kappa$ & 1.6220 & 2.3821 & $1.4859\times10^{3}$ & $9.2459\times10^{14}$ & $5.3376\times10^{16}$ & $5.4891\times10^{18}$ \\ \hline
\end{tabular}
}
\caption{\small{The condition number $\kappa$ of a reconstruction matrix arising from \R{GS_LS_data} is calculated when $88\times88$ indicator functions are used and samples are acquired on a polar sampling scheme contained in $[-K,K]^2$, $K=32$, so that $\text{dim}(\rT)=(2.75K)^2$. The number of radial lines $n$ of the polar scheme is varying, as well as the corresponding sampling density $\delta_2$, which is measured with respect to the Euclidean norm.}  
}
\label{tab:density_cond_num_t}
\end{table}

\begin{figure}[H]
\begin{center}
  $\begin{array}{ccc}
  \multicolumn{3}{c}{\text{\small{Original image}} \hspace{2.2cm} \text{\small{Reconstruction}}}\\
  \multicolumn{3}{c}{\hspace{4.7cm} \text{\small{$n=1380$, $\delta_1<0.25$}}}\\
  \multicolumn{3}{c}{\includegraphics[scale=0.191,trim=0.cm 0cm 0cm 0cm]{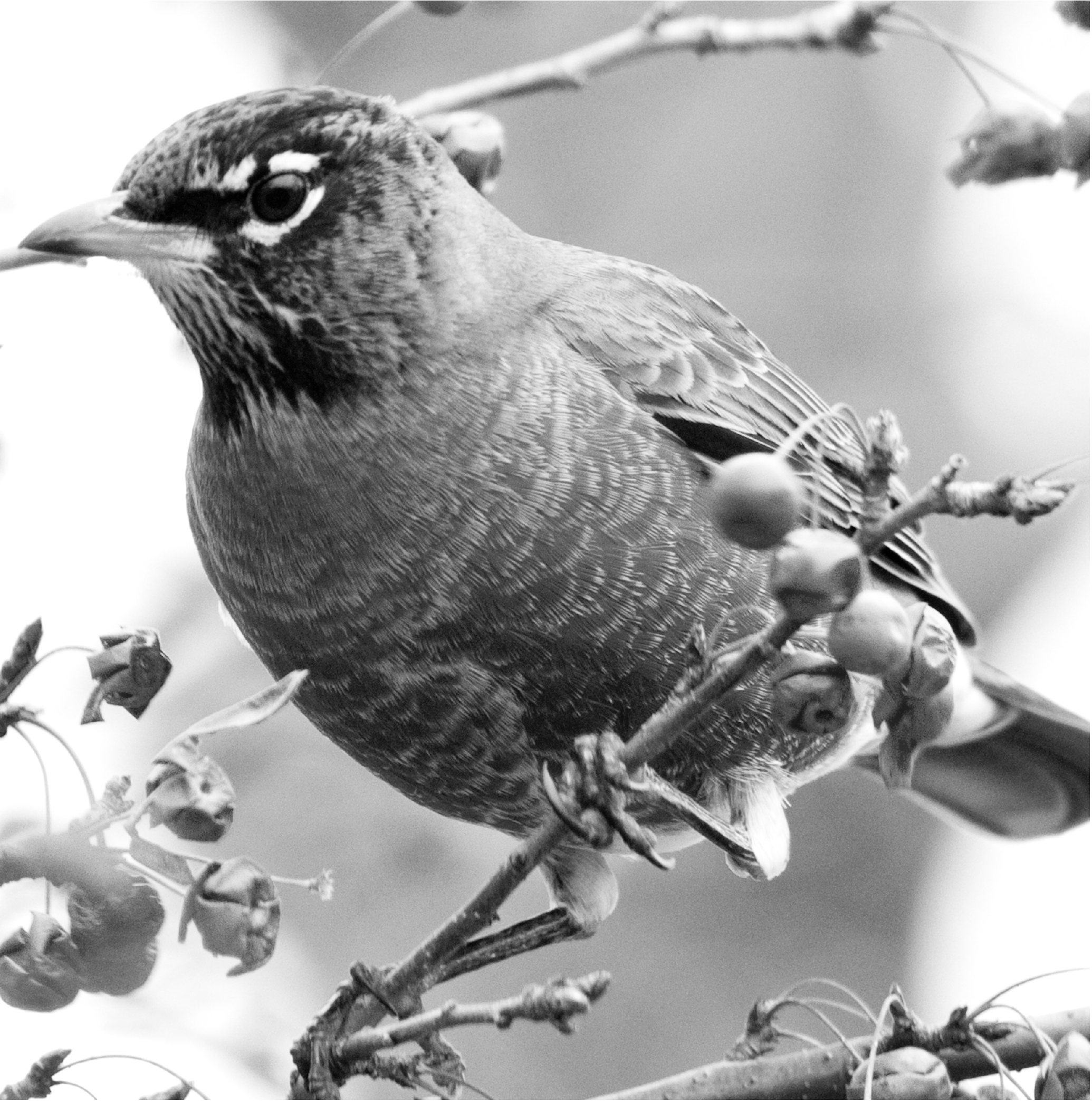} \quad \includegraphics[scale=0.44,trim=0.cm 0cm 0cm 0cm]{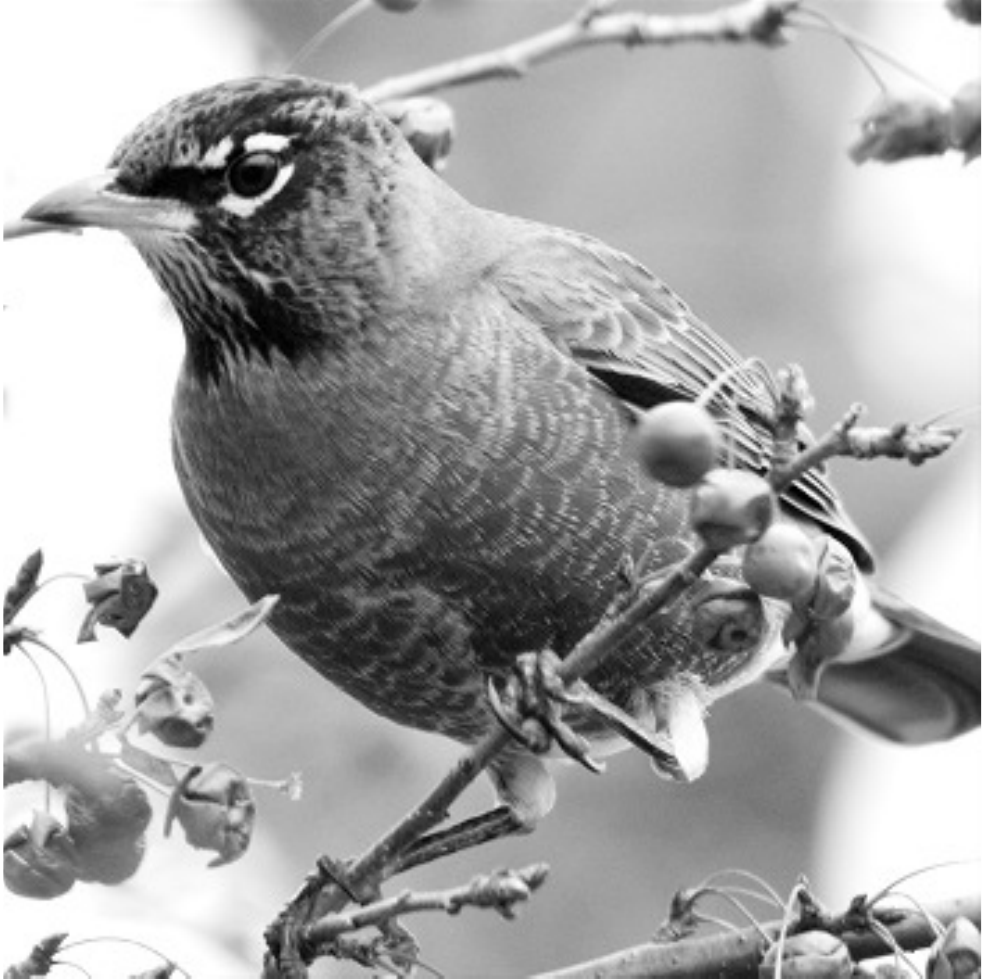}}\\
  \multicolumn{3}{c}{\text{\small{Reconstructions with insufficient densities}}}\\
  \text{\small{$n=690$, $\delta_2=0.31$}} &\text{\small{$n=345$, $\delta_2=0.59$}} & \text{\small{$n=173$, $\delta_2=1.17$}} \\  
  \includegraphics[scale=0.44,trim=0.cm 0cm 0cm 0cm]{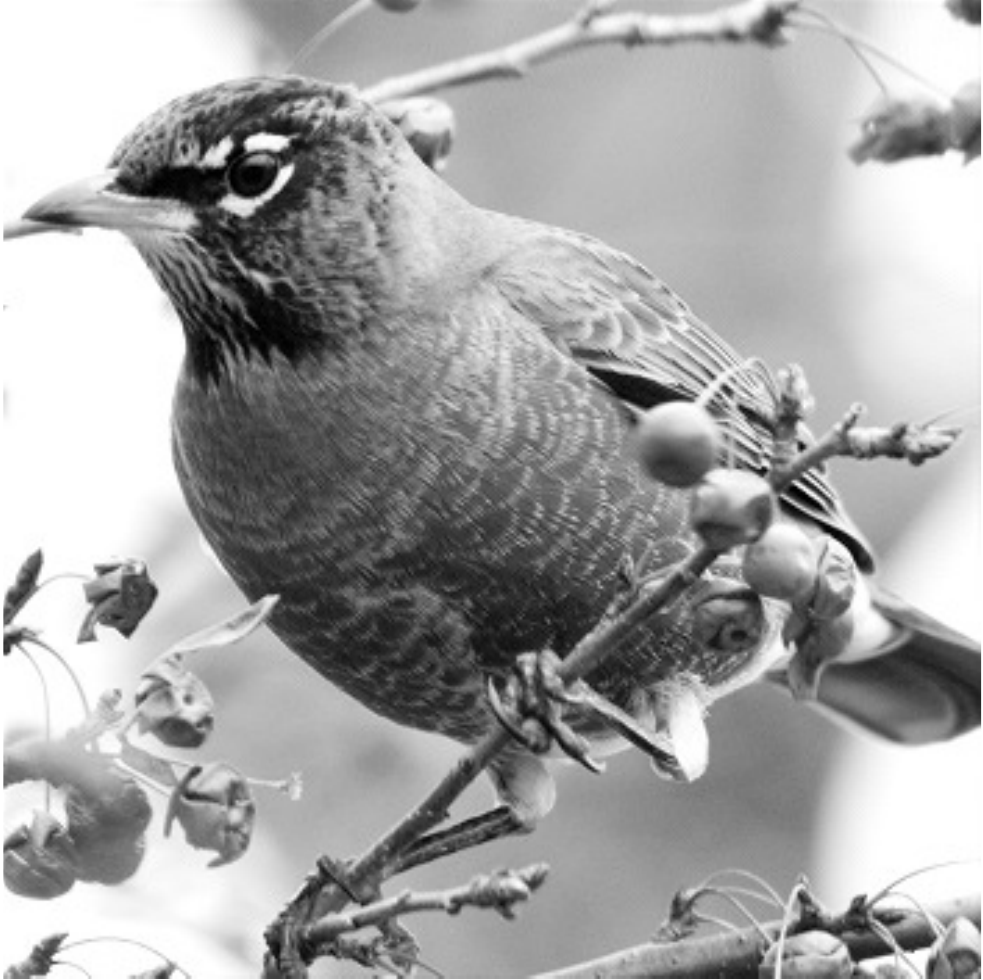} & \includegraphics[scale=0.44,trim=0.cm 0cm 0.cm 0cm]{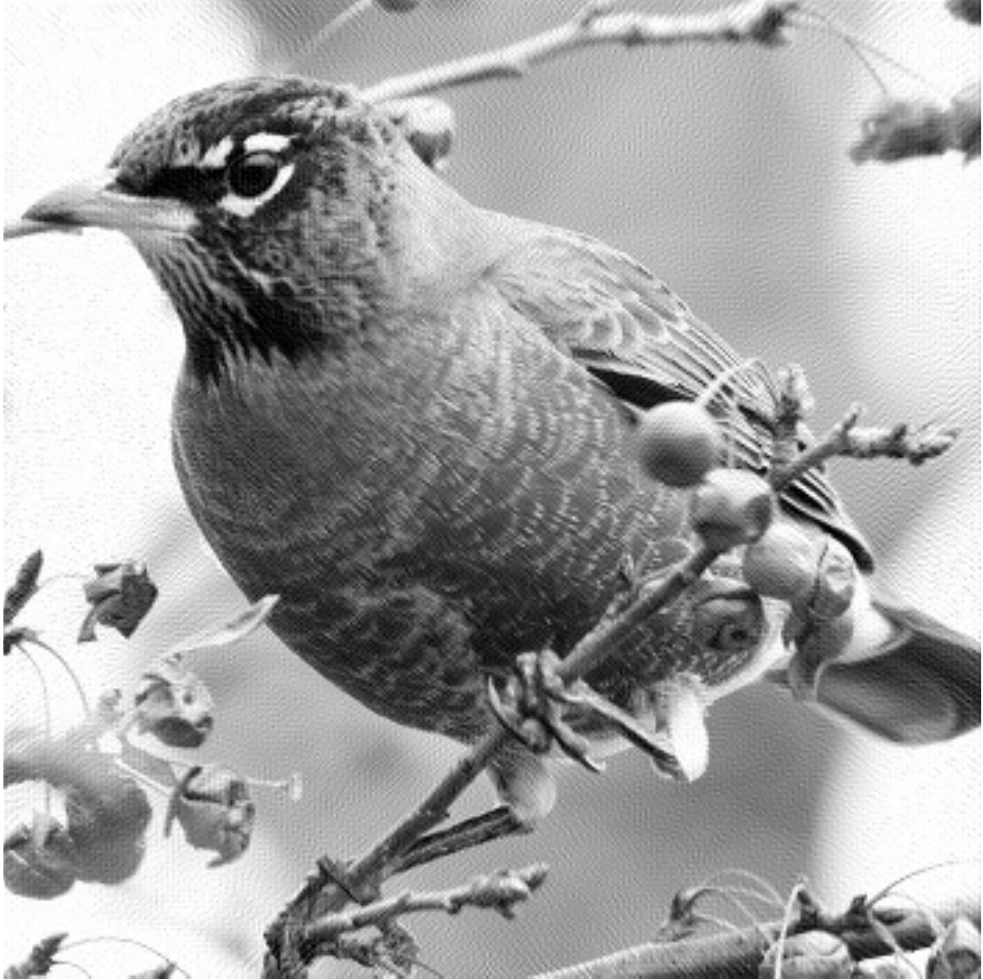} & \includegraphics[scale=0.44,trim=0.cm 0cm 0cm 0cm]{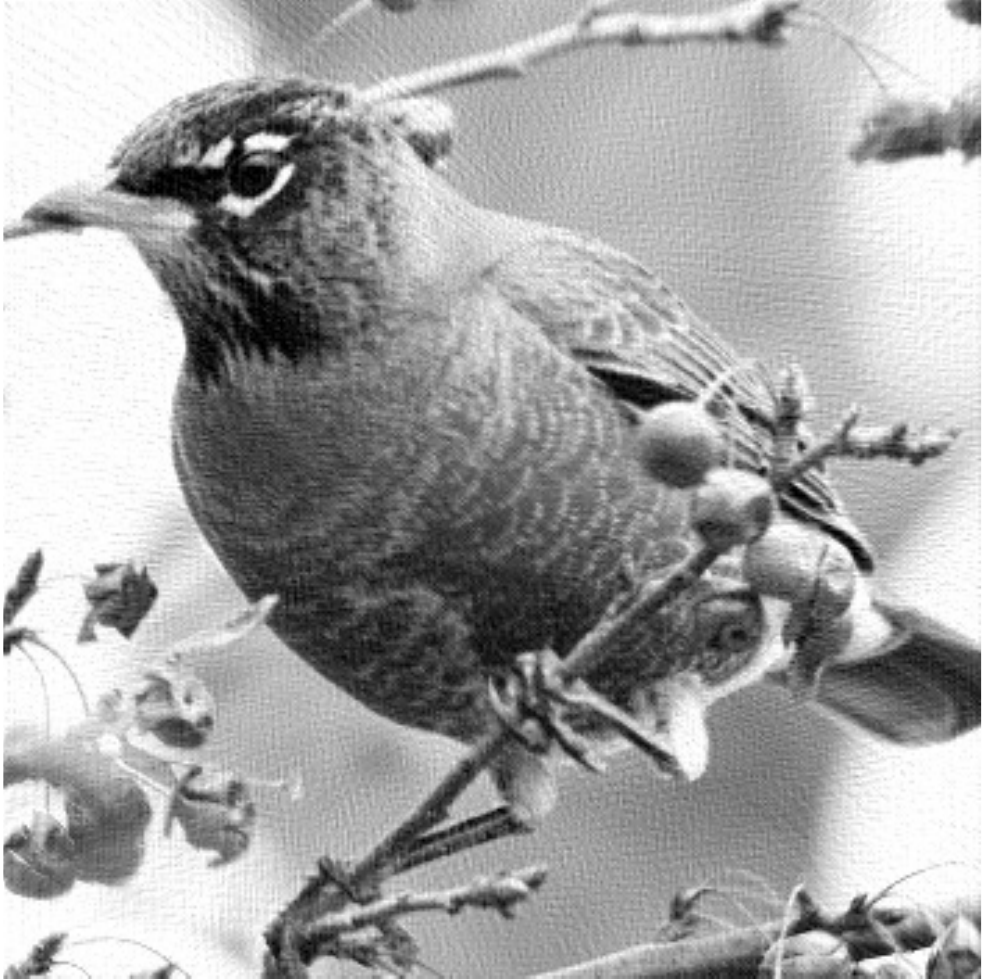} \\  
  \text{\small{$n=87$, $\delta_2=2.31$}} & \text{\small{$n=44$, $\delta_2=4.57$}} & \text{\small{$n=22$, $\delta_2=9.13$}} \\
  \includegraphics[scale=0.44,trim=0.cm 0cm 0cm 0cm]{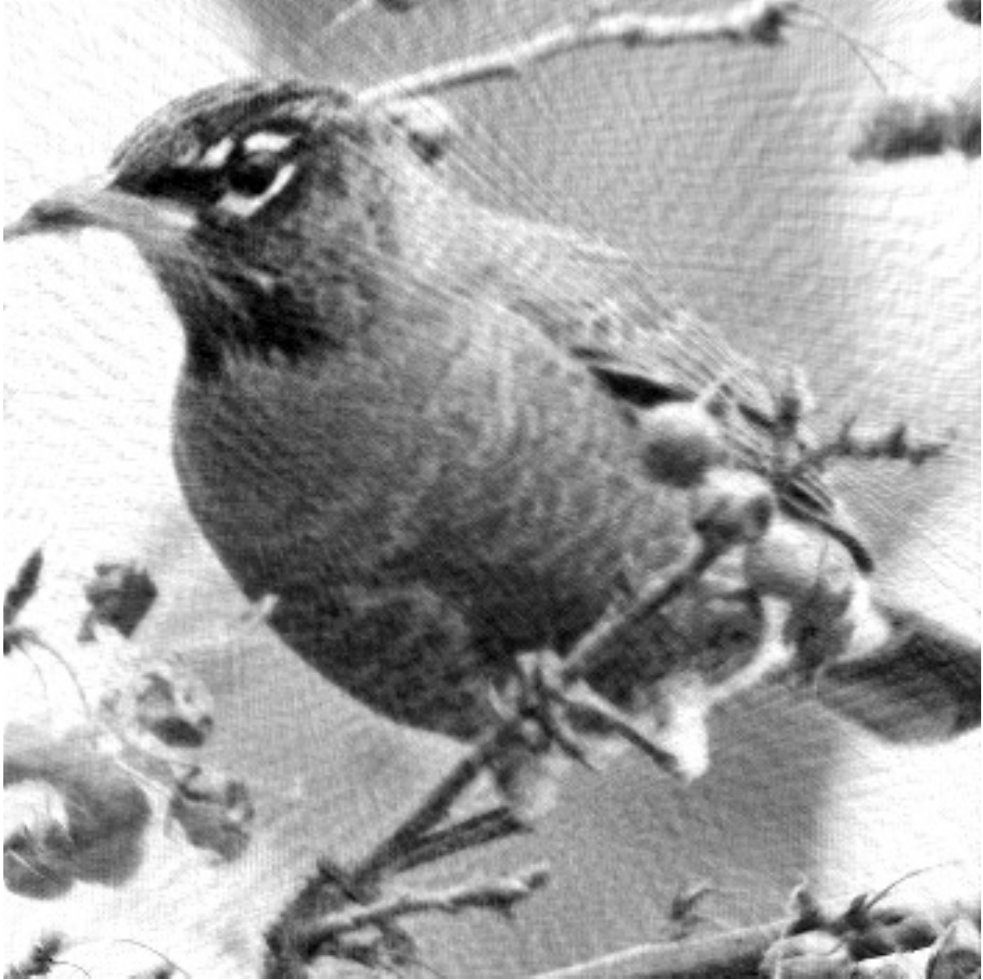} & \includegraphics[scale=0.44,trim=0.cm 0cm 0.cm 0cm]{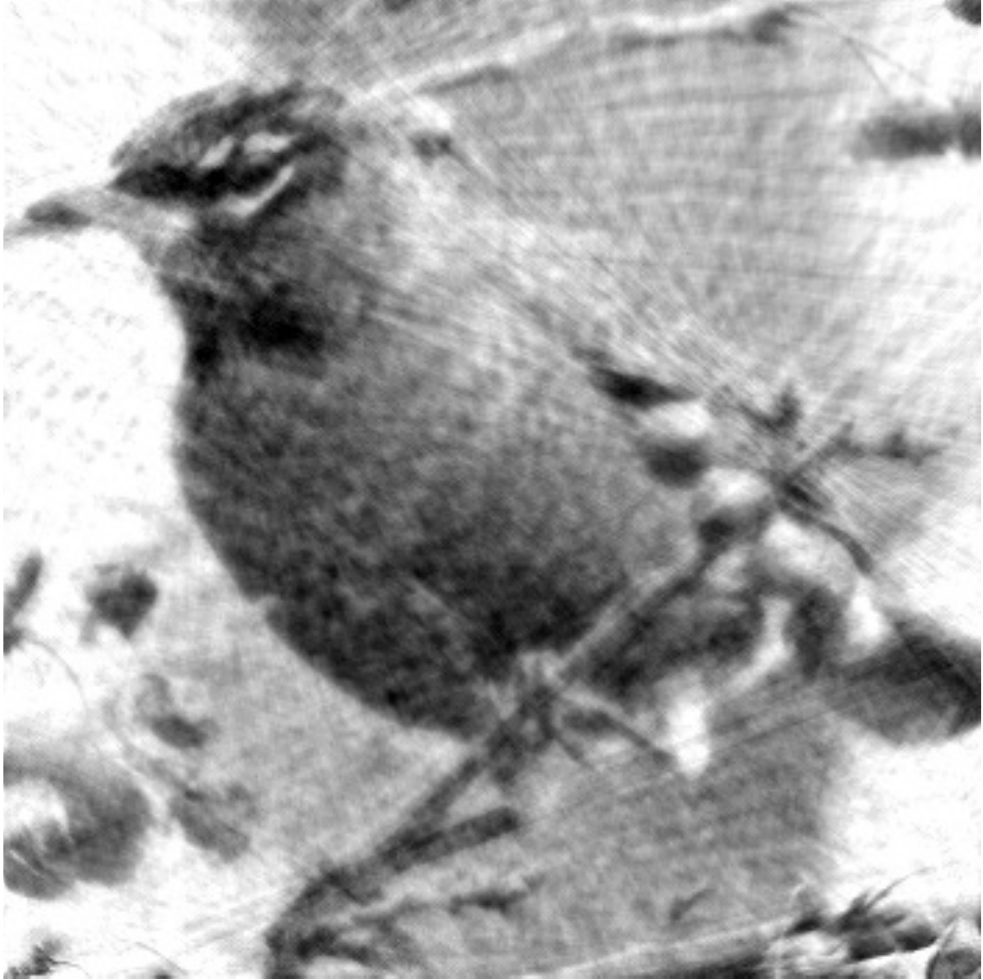} &  \includegraphics[scale=0.44,trim=0.cm 0cm 0.cm 0cm]{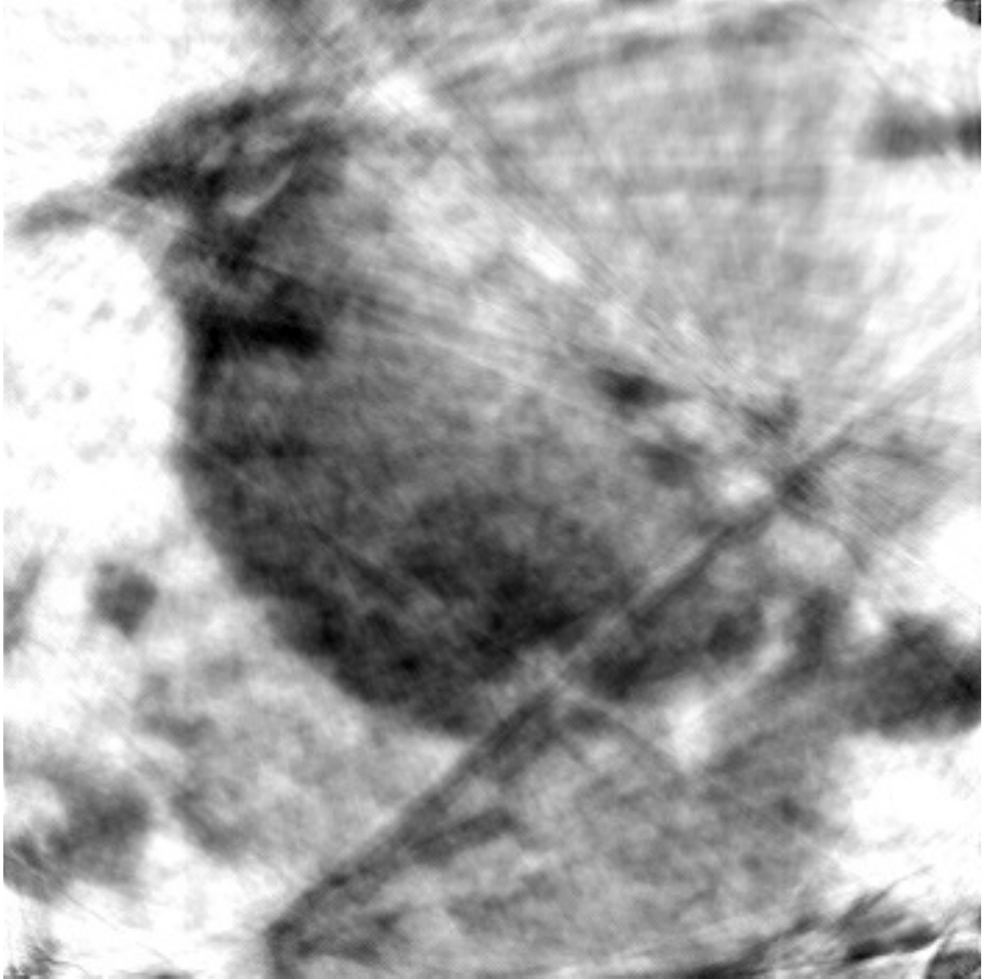}
  \end{array}$ 
    \vspace{-0.5cm}
\end{center}
\caption{\small{A high resolution image of $4500\times4500$ pixels is reconstructed by NUGS in the space $\rT$ consisting of $352\times352$ indicator functions when samples are taken on a polar sampling scheme contained in  $[-K,K]^2$, $K=128$. The relation $\text{dim}(\rT)=(2.75K)^2$ is used. The reconstructions are shown for sampling schemes with different densities, i.e.~different number of radial lines $n$. Here, the density in the Euclidean norm $\delta_2$ was directly computed on a computer. Since $\delta_1\geq\delta_2$, note that  $\delta_2\geq0.25$  ensures that the density condition $\delta_1<0.25$ is violated. }
    } 
    \label{fig:density_det}
\end{figure}

\section{Conclusions}\label{s:conclusions}

In the paper, we provide new theoretical insight of when a given countable set of sampling points yields a weighted Fourier frame, and therefore permits a multidimensional function recovery. To have a weighted Fourier frame for the space of $\rL^2$ functions supported on a compact convex and symmetric set $E$, it is enough to take pointwise measurements of its Fourier transform at points with density  $\delta_{E^\circ}<1/4$. Separation of sampling points is not required. Moreover, the weighted Fourier frame bounds are explicitly estimated in the case of smaller densities  than previously known, and in particular, their dimension dependence is removed for the space of functions supported on spheres. However, it remains an open problem to explicitly estimate frame bounds for even smaller densities (larger $\delta$), closer to the dimensionless condition $\delta_{E^\circ}<1/4$.

By exploiting these novel results on weighted Fourier frames,  the method for recovering a function in any given finite-dimensional space, known as NUGS, is analysed in multivariate setting. Its  stability and accuracy are guaranteed provided that finitely many samples are taken with both density and  bandwidth large enough. The density required is the same as the
one that guarantees weighted Fourier frames.

It remains an open question how to choose the sampling bandwidth $K$ depending on the specific reconstruction space. In \cite{1DNUGS}, the authors considered important case of reconstruction spaces $\rT$ consisting of compactly supported wavelets in the one-dimensional setting.  For any $\epsilon > 0$, it was shown that $R_K(\rT) < \epsilon$, provided $K \geq c(\epsilon) M$, where $M = \dim(\rT)$ and $c(\epsilon) >0$ is a constant depending on $\epsilon$ only (see \cite[Thm.\  5.3 and Thm.\  5.4]{1DNUGS}).  This means that a \textit{linear} scaling of the sampling bandwidth $K$ with the wavelet dimension $M$ is sufficient for stable recovery (necessity was also shown -- see \cite[Thm.\  6.1]{1DNUGS}). For this reason, wavelets subspaces are up to constant factors optimal spaces for reconstruction. These results from \cite{1DNUGS} present a generalization of the results proven in \cite{AHPWavelet} to the case of nonuniform Fourier samples. The case of wavelet recovery from uniform Fourier samples was extended to the multivariate setting in \cite{BAACHGKJM2Dwavelets}. We also expect these results to extend to the nonuniform multivariate case, but this is left for further investigations.

Having developed the NUGS framework in multivariate setting, it is possible to  consider  recoveries from nonuniform samples in any finite-dimensional space one desires. Besides wavelets, one can consider spaces consisting of  algebraic or trigonometric polynomials as they were considered in \cite{AdcockGataricHansenProceedings2015} in the one-dimensional case, as well as important generalizations of wavelets, such as curvelets and shearlets. This  is also left for future work.

\section*{Acknowledgements}
The authors would like to thank Karlheinz Gr\"ochenig and Gil Ramos for useful discussions and Clarice Poon for Matlab code used in an initial stage of our implementation.

\small
\bibliographystyle{plain}
\bibliography{references2D}
 
\end{document}